\def\bC{\mathbb{C}}
\def\bR{\mathbb{R}}
\def\bT{\mathbb{T}}
\def\bZ{\mathbb{Z}}
\def\cD{\mathcal{D}}
\def\cE{\mathcal{E}}
\def\cH{\mathcal{H}}
\def\cI{\mathcal{I}}
\def\cL{\mathcal{L}}
\def\cO{\mathcal{O}}
\def\cP{\mathcal{P}}
\def\cQ{\mathcal{Q}}
\def\cS{\mathcal{S}}
\def\cT{\mathcal{T}}
\def\cX{\mathcal{X}}
\def\gF{\mathfrak{F}}
\newtheorem{theorem}{Theorem}[section]
\newtheorem{remark}[theorem]{Remark}
\newtheorem{lemma}[theorem]{Lemma}
\newtheorem{proposition}[theorem]{Proposition}
\newtheorem{definition}[theorem]{Definition}
\newtheorem{assumption}{Assumption}[theorem]
\newtheorem{notation}[theorem]{Notation}
\DeclareMathOperator\Id{Id}
\DeclareMathOperator\Tr{Tr}
\def\<{\langle} \def\>{\rangle} % Scalar produc
\def\diff{\mathrm{d}}
\def\bbone{\mathbbm{1}}
\def\sm{\setminus}
\DeclareMathOperator*\argmin{argmin}
\def\ve{\varepsilon}
\def\Burgers{\mathrm{B}}
\def\Porous{\mathrm{P}}
\newtheorem*{assumption*}{Assumption}
\def\brho{\boldsymbol{\rho}}
\def\bm{\boldsymbol{m}}
\def\bu{\boldsymbol{u}}
\DeclareMathOperator\prox{prox}
\DeclareMathOperator\diver{div}
\def\odd{\mathrm{odd}}\def\even{\mathrm{even}}
\def\Re{\mathrm{Re}}
\DeclareMathOperator\erfc{erfc}
\DeclareMathOperator\supp{supp}
\def\sign{\epsilon}
\def\Diff{\mathrm{D}}
\def\vp{\varphi}
\title{Discretization and convergence of the ballistic Benamou-Brenier
formulation of the porous medium and Burgers' equations.}
\author{
Jean-Marie Mirebeau\footnote{
University Paris-Saclay, ENS Paris-Saclay, CNRS, Centre Borelli, F-91190 Gif-sur-Yvette, France.
},
Erwan Stampfli\footnote{
University Paris-Saclay, Laboratoires de Mathématiques d'Orsay, 91400 Orsay, France.
}
}
\crefname{assumption}{Assumption}{Assumptions} \crefname{notation}{Notation}{Notations}
\crefname{theorem}{Theorem}{Theorems}
\crefname{remark}{Remark}{Remarks}
\crefname{lemma}{Lemma}{Lemmas}
\crefname{proposition}{Proposition}{Propositions}
\crefname{corollary}{Corollary}{Corollaries}
\crefname{definition}{Definition}{Definitions}
\begin{document}
\maketitle
\begin{abstract}
We study the discretization, convergence, and numerical implementation of recent reformulations of the quadratic porous medium equation (multidimensional and anisotropic) and Burgers' equation (one-dimensional, with optional viscosity), as forward in time variants of the Benamou-Brenier formulation of optimal transport. 
This approach turns those evolution problems into global optimization problems in time and space, of which we introduce a discretization, one of whose originalities lies in the harmonic interpolation of the densities involved.
We prove that the resulting schemes are 
unconditionally stable w.r.t.\ the space and time steps, 
and we establish a quadratic convergence rate for the dual PDE solution, 
% the solution of the dual PDE, 
under suitable assumptions. We also show that the schemes can be efficiently solved numerically using a proximal splitting method and a global space-time fast Fourier transform, and we illustrate our results with numerical experiments. 
\end{abstract}

% --------------------- MSC --------------------
% 65M06  	Finite difference methods for initial value and initial-boundary value problems involving PDEs
% 65M12  	Stability and convergence of numerical methods for initial value and initial-boundary value problems involving PDEs
%49M29  	Numerical methods involving duality
%35A15  	Variational methods applied to PDEs
%(35L65  	Hyperbolic conservation laws)

%65K15  	Numerical methods for variational inequalities and related problems
%76S05  	Flows in porous media; filtration; seepage
%35L02  	First-order hyperbolic equations
%(35L65  	Hyperbolic conservation laws)
%35K55  	Nonlinear parabolic equations
%49M41  	PDE constrained optimization (numerical aspects)

\footnotetext{Mathematics Subject Classification (2020) : 65M06, 65M12, 49M29, 35A15, 35L65}

\section{Introduction}

A number of non-linear partial differential equations (PDEs) can be reformulated as global convex optimization problems in time and space \cite{brenier2020examples}, revealing so-called \emph{hidden convexity} properties.
We follow in this paper an approach whose properties are particularly appealing for evolution problems featuring quadratic non-linearities \cite{vorotnikov2022partial}, already investigated numerically for Burgers' equation \cite{brenier2020examples,kouskiya2024inviscid} and for non-linear elasticity \cite{singh2024hidden}. 
In the continuous setting, its benefits include new notions of weak solutions, and related methods for proving their existence and/or uniqueness.
In the discrete setting, it leads to a radically new method for solving evolution problems, of which this paper provides the first numerical convergence analysis, with schemes free of any Courant-Friedrichs-Levy (CFL) condition, second-order accurate in time and space,  solved using proximal optimization algorithms and global space-time fast-Fourier transforms. 
As discussed below, this approach shares similarities with the Benamou-Brenier dynamic  formulation of optimal transport 
\cite{benamou2000fluidOT}
,  whose numerical analysis is studied in e.g.\

\cite{lavenant2021unconditional,natale2021computation} or \cite{liConvergenceDiscreteDynamic2024} in the matrix-valued setting. Our numerical implementation eventually relies on proximal splitting, inspired by 
\cite{papadakis2014optimal}
which follows that approach for the dynamic formulation of optimal transport, see \cite{lavenant2021unconditional} 
for a history and comparison of alternative numerical methods.
While \cite{vorotnikov2022partial,brenier2020examples} mostly focus on \emph{systems} of PDEs, often related to fluid mechanics, we shall limit in this paper our attention to two \emph{scalar} PDEs for simplicity, see \eqref{eq:QPME_Burgers}. Before that, let us further motivate the approach and replace it within the literature. 

The initial value problem for many non-linear evolution PDEs may admit infinitely many weak solutions for most initial conditions
\cite{diperna1979uniqueness,de2009euler,de2010admissibility}.
For the Euler equations of incompressible fluids and, also, for the class of hyperbolic systems of conservation laws with
a convex entropy, Brenier \cite{brenier2018initial} considered the following problem:
given an initial condition $u_0$ and a finite time interval $[0,T]$,
find among all weak solutions
starting from $u_0$ one that minimizes the time integral of the kinetic energy (for Euler) or the entropy (for systems of conservation laws).
This problem might have no solution, but it generates a dual variational problem which is automatically convex and usually admits a
solution, that we call \emph{dual solution}. Brenier obtained an explicit formula relating this dual solution to any tentative smooth solution of the
initial value problem, at least when $T$ is not too large, see \cref{remark:finalT}. It was also pointed out that dual problems of this kind
are strongly related to optimal transport problems (in their `Benamou-Brenier' formulation
\cite{benamou2000fluidOT}).
and can be interpreted as generalized (with matrix-valued densities) Mean Field Games 
\`a la Lasry-Lions. The method of \cite{brenier2018initial} was further developped by Vorotnikov 
\cite{vorotnikov2022partial} for a very large class of quadratic evolution PDEs and
a similar duality approach has been used by Acharya and collaborators \cite{acharya2023variational,acharya2024action,acharya2024variational}.
Let us finally mention \cite{brenier2022relaxed} where the dual technique is
applied to a multi-stream formulation of the Euler-Poisson system and \cite{brenier2020examples} where various parabolic PDEs are considered,
including the QPME, for which there is no restriction on the size of $T$ and which is the main subject of the present paper.\\

As announced, this paper is devoted to the discretization and convergence analysis of dual reformulations of two scalar evolution PDEs featuring quadratic non-linearities:
the quadratic porous medium equation (QPME, multidimensional and anisotropic), and Burgers' equation with optional viscosity (one dimensional).
\begin{align}
\label{eq:QPME_Burgers}
\text{QPME : }
	\partial_t u &= \tfrac 1 2 \diver(\cD \nabla u^2), &
	\text{Burgers' : }
	\partial_t u + \tfrac{1}{2} \partial_x u^2 = \nu \partial_{xx} u.
\end{align}
They are posed for simplicity on the periodic unit box $\bT^d$, with $d=1$ for Burgers' equation, where $\bT:=\bR/\bZ$ is the periodic unit interval. 
See \cite{kouskiya2024inviscid} for a closely related approach implementing non-periodic boundary conditions in Burgers' case.
The PDE parameters are a smooth positive definite tensor field $\cD \in C^\infty(\bT^d,\cS_d^{++})$, following \cite{li2018anisotropic}, and a non-negative diffusion coefficient $\nu \geq 0$ respectively. A smooth and positive initial condition $u_0 \in C^\infty(\bT^d,]0,\infty[)$ and a bounded time interval $[0,T]$ are also given. 
We make these strong assumptions for the sake of the numerical analysis, even though the PDEs of interest admit solutions under weaker assumptions \cite{vazquez2007porous}. Our numerical experiments \cref{sec:num} also show that the proposed schemes are robust and appear to work without these assumptions.  

The two PDEs \eqref{eq:QPME_Burgers} share a common structure, with a quadratic non-linearity:
\begin{align}
\label{eq:pdeAL}
	\partial_t u + \tfrac 1 2 L(u^2) + L A u &= 0
	\quad \text{on } [0,T], &
	u(0) &= u_0,
\end{align}
where $L = -\diver(\cD\nabla\cdot)$ and $A=0$ for the QPME, and $L = \partial_x$ and $A = -\nu \partial_x$ for Burgers' equation.
Following \cite{brenier2020examples,vorotnikov2022partial} we formally consider the (apparently trivial) problem \eqref{eq:minL2weak} of minimizing the $L^2$ norm among all weak solutions of \eqref{eq:pdeAL}; variants featuring a time-dependent weight or a base state can also be considered, see \cite{vorotnikov2025hidden,acharya2024variational} and \cref{remark:finalT}. Formally, again, we exchange in \eqref{eq:gap} the $\sup$ and $\inf$, and perform an integration by parts \begin{align}
\hspace{-5mm}
\label{eq:minL2weak}
	&\inf_{u(0) = u_0} \sup_{\phi(T)=0}\int_{[0,T]\times \bT^d} \tfrac 1 2 u^2 + (\partial_t u + \tfrac 1 2 L(u^2) + L A u) \phi \\
\label{eq:gap}
	&\overset{?}{=}\!\! 
	\sup_{\phi(T)=0} \inf_{u(0) = u_0} \int_{[0,T]\times \bT^d}
	\tfrac 1 2 u^2 (1+L^*\phi) - (u-u_0)\partial_t \phi + u A^*L^*\phi.
\end{align}
Here and below, given a bivariate mapping $u(t,x)$, we denote by $u(t) := u(t,\cdot)$ the univariate function of the space variable alone, where the time argument has been frozen to some given value $t$.
Note that One always has \eqref{eq:minL2weak} $\geq$ \eqref{eq:gap}, and the difference \eqref{eq:minL2weak} $-$ \eqref{eq:gap} is referred to as the duality gap.
We make at this point the critical assumption that $1+L^*\phi>0$ everywhere on $[0,T] \times \bT^d$, which is closely related with the lack of duality gap, see \cref{remark:finalT}.
%see \cref{remark:finalT}, which also discusses the (lack of) duality gap in \eqref{eq:gap}.
Minimizing pointwise \eqref{eq:gap} w.r.t.\ the primal variable $u$, we obtain the non-trivial dual problem \cite{brenier2020examples,vorotnikov2022partial,kouskiya2024inviscid}:
\begin{align}	
\label{eq:energyPhi}
	&\inf_{\phi(T)=0}\int_{[0,T]\times \bT^d} \frac{(\partial_t \phi - A^*L^*\phi)^2}{2(1+L^*\phi)} - u_0 \partial_t \phi.
\end{align}
The primal and dual variables $u$ and $\phi$ obey the following optimality relation 
\begin{align}
\label{eq:pdePhi}
	\partial_t \phi - A^*L^*\phi &= (1+L^*\phi) u, 
	&\phi(T)&=0,
\end{align}
which can be regarded as a reversed in time linear evolution problem w.r.t.\ $\phi$.
(For large $T$, this may contradict our assumption $1+L^*\phi>0$, see \cref{remark:finalT}.)
Defining a pseudo-momentum and pseudo-density as 
 \begin{align}
\label{eq:mPhiRho}
	m &:= \partial_t \phi, &
	\rho &:= 1+L^*\phi,
\end{align}
we obtain as announced a reformulation of the  evolution PDE \eqref{eq:pdeAL} as an optimization problem which is global in time and space, and is close in spirit to the Benamou-Brenier formulation of optimal transport. Indeed, assuming $A^* 1 = 0$ which holds in the two cases of interest, \eqref{eq:energyPhi} is reformulated as 
\begin{align}
\label{eq:pdeMRho}
	&\inf_{m,\rho} \int_{[0,T]\times \bT^d} \frac{(m-A^*\rho)^2}{2\rho} - u_0 m &
	&\text{subject to } \partial_t \rho = L^*m \text{ and } \rho(T)=1.
\end{align}
The solution of the original problem \eqref{eq:QPME_Burgers} can be recovered as $u=(m-A^*\rho)/\rho$ at all points where $\rho>0$, in view of \eqref{eq:pdePhi} and \eqref{eq:mPhiRho}.
Specializing the above expressions to the models of interest, we obtain the following relations between the primal variable $u$ and dual pseudo-momentum $m$ and pseudo-density $\rho$, and the following dual optimization problems:
\begin{align}
\label{eq:EnergyQPME}
\text{QPME : } u &= \frac{m}{\rho}, &
	\inf_{m,\rho}& \int_{[0,T]\times \bT^d} \frac{m^2}{2\rho} - u_0 m
	\ \text{ s.t. } \partial_t \rho = \diver(\cD m) \ \text{ and } \rho(T)=1.\\
\nonumber
\text{Burgers' : } u&=\frac{ m-\nu \partial_x \rho}{\rho}, &
	\inf_{m,\rho} &\int_{[0,T]\times \bT} \frac{(m-\nu \partial_x \rho)^2}{2 \rho} - u_0 m 
	\ \text{ s.t. } \partial_t \rho = - \partial_x m \text{ and } \rho(T)=1.
\end{align}
%DIF <  $u=m/\rho$ for the QPME, and $u=(m-\nu \partial_x \rho)/\rho$ for Burgers' equation. % with viscosity $\nu \geq 0$. %optional 
We refer to \cite{brenier2020examples} and \cite[Appendix B]{vorotnikov2025hidden} for further discussion of the analogy with optimal transport, and the `ballistic' terminology.

As announced, this paper is devoted to a numerical convergence analysis of the energetic formulation \eqref{eq:pdeMRho} of the QPME and Burgers' equation \eqref{eq:QPME_Burgers}. 
For simplicity, we discretize the unknowns on Cartesian grids. For stability and accuracy, we use staggered finite differences in time and space, and for that purpose we define
$\cT_\tau, \cT'_\tau \subset [0,T]$ and $\bT_h,\bT'_h \subset \bT$ as 
\begin{align*}
	\cT_\tau &:= \{0,2\tau,\cdots,T\}, &
	\cT'_\tau &:= \{\tau,3\tau,\cdots,T-\tau\}, \\
	\bT_h &:= \{0,2h,\cdots,1-2h\},&
	\bT'_h &:= \{h,3h,\cdots,1-h\}.
\end{align*}
where $\tau>0$ and $h>0$ respectively denote the \emph{half} timestep, and \emph{half} gridscale. 
The grids $\cT_\tau$ and $\bT_h$ are \emph{centered}, whereas $\cT'_\tau$ and $\bT'_h$ are \emph{staggered}.
The following assumption is implicit throughout the paper. 
\begin{assumption}
\label{assum:step}
	The grid sizes $N_\tau := T/(2\tau)$ and $N_h := 1/(2h)$ are positive integers.
	%time and space 
\end{assumption}
The staggered first order time finite difference, and the first order spatial finite difference in the direction $e \in \bZ^d$ (which may or may not be staggered depending on the parity of the components of $e$), are defined as 
\begin{align}
\label{eqdef:DtDhe}
	\partial_\tau u(t,x) &:= \frac{u(t+\tau,x)-u(t-\tau,x)}{2\tau}, &
	\partial_h^e u(t,x) &:= \frac{u(t,x+he)-u(t,x-he)}{2h}.
\end{align}
The one-dimensional spatial finite difference operators are denoted $\partial_h := \partial_h^{(1)}$ and $\partial_{hh} := \partial_h \partial_h$.
We also use the averaged $\ell^1(\bT_h^d)$ norm, and the \emph{perspective function} $\cP : \bR^2 \to [0,\infty]$ \cite{combettes2018perspective}:
\begin{align}
\label{eqdef:perspective}	
	\|f\|_{\ell^1(\bT_h^d)} &:= 
	(2h)^d \sum_{x \in \bT_h^d} |f(x)|,
	&
	\cP(m,\rho) &:= 
%	\sup\{a \rho + b m\mid a+\tfrac 1 2 b^2 \leq 0\} = 
%	\sup_{a+\frac 1 2 b^2 \leq 0} a\rho + b m =
	\begin{cases}
		\frac{m^2}{2\rho} & \text{if } \rho >0,\\
		0 & \text{if } m=\rho=0,\\
		\infty & \text{else}.
	\end{cases} 
\end{align}
The ratios appearing in the energies \eqref{eqdef:porousEnergy} and \eqref{eqdef:BurgersEnergy} should be understood as instances of $\cP$, which is convex and lower semi-continuous in view of the identity $\cP(m,\rho) = \sup\{a \rho + b m\mid a+\tfrac 1 2 b^2 \leq 0\}$.

\paragraph{Discretization of the QPME.}
A preliminary step is the construction of a finite differences approximation of the anisotropic divergence-form Laplacian operator $-L := \diver(\cD \nabla \cdot)$.
In the isotropic case where $\cD = \Id$ identically, the usual finite differences Laplacian $-L_h := \sum_{1 \leq i \leq d} \, (\partial_h^{e_i})^2$ should be used, where $(e_1,\cdots,e_d)$ denotes the canonical basis, in expanded form $-L_h u(x) := (2h)^{-2} \sum_{1 \leq i \leq d} [u(x+2h e_i)-2 u(x) + u(x-2h e_i)]$. 
In the anisotropic case, we use an adaptive finite differences scheme with similar structural properties: for any $u : \bT_h^d \to \bR$
\begin{align}
\label{eqdef:Lh}	
	- L_h u &:= \sum_{e \in E} \partial^e_h(\lambda^e\partial_h^e u), &
	\text{where } \cD &= \sum_{e \in E} \lambda^e e e^\top.
\end{align}
We denoted by $E \subset \bZ^d$ a finite set referred to as the stencil, and by $\lambda^e \in C^\infty(\bT^d,[0,\infty[)$ some smooth non-negative coefficients obeying (\ref{eqdef:Lh}, right), see
\cref{rem:decompSDP}.
 Expanding (\ref{eqdef:Lh}, left) we obtain for all $x \in \bT^d_h$ (note that $x+2h e \in \bT^d_h$ for any $e \in E$) 
	\begin{equation}
	\label{eq:LhExpl}
		- L_h u(x) = (2h)^{-2} \sum_{e \in E} 
		\Big[\lambda^e(x+he) \big(u(x+2h e)-u(x)\big) 
		+ \lambda^e(x-h e) \big(u(x-2h e)-u(x)\big)\Big].
	\end{equation}
The next lemma provides a counterpart, for the QPME in the discrete setting, of the change of unknowns \eqref{eq:mPhiRho}. 
\begin{lemma}
\label{lem:domPorous}
	The set $\Phi^\Porous_{\tau h}:= \{\phi \in \bR^{\cT_\tau \times \bT^d_h} \mid \phi(T)=0\}$
	is in bijection with 
	$\{(m,\rho) \in \bR^{\cT'_\tau \times \bT^d_h} \times \bR^{\cT_\tau \times \bT^d_h} \mid \partial_\tau \rho = L_h m,\ \rho(T)=1\}$, 
	via $\phi \mapsto (\partial_\tau \phi, 1+L_h \phi)$. 
\end{lemma}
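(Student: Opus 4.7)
The plan is first to verify that the map $\phi \mapsto (m,\rho):=(\partial_\tau \phi, 1+L_h\phi)$ lands in the prescribed codomain, then to prove injectivity from the time-difference relation alone, and finally to prove surjectivity by backward-in-time recursion, with a compatibility check that leans on the constraint $\partial_\tau \rho = L_h m$.

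For well-posedness of the codomain, if $\phi(T)=0$ then $\rho(T)=1+L_h\phi(T)=1$, and since the finite-difference operators $\partial_\tau$ and $L_h$ act on disjoint variables they commute, yielding $\partial_\tau \rho = L_h \partial_\tau \phi = L_h m$. For injectivity, suppose $\phi,\tilde\phi\in\Phi^\Porous_{\tau h}$ have the same image; then $\psi:=\phi-\tilde\phi$ satisfies $\partial_\tau \psi = 0$ on $\cT'_\tau$ and $\psi(T)=0$. Unrolling the staggered difference, $\partial_\tau \psi(t,x)=0$ reads $\psi(t+\tau,x)=\psi(t-\tau,x)$ for every $t\in\cT'_\tau$, so $\psi$ is constant in time on $\cT_\tau$; combined with $\psi(T)=0$ this forces $\psi\equiv 0$.

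For surjectivity, given $(m,\rho)$ satisfying the constraints, define $\phi\in\bR^{\cT_\tau\times\bT_h^d}$ by the backward recursion $\phi(T,x):=0$ and
\begin{equation*}
\phi(t-2\tau,x) := \phi(t,x) - 2\tau\, m(t-\tau,x),\qquad t\in\cT_\tau\setminus\{0\}.
\end{equation*}
This is the unique choice making $\partial_\tau \phi = m$ while enforcing $\phi(T)=0$. It remains to check $1+L_h\phi=\rho$, equivalently $L_h\phi=\rho-1$. At $t=T$ this holds since both sides vanish. Propagating by induction and using $\partial_\tau\rho=L_h m$:
\begin{equation*}
L_h\phi(t-2\tau) = L_h\phi(t) - 2\tau\, L_h m(t-\tau) = (\rho(t)-1) - 2\tau\,\partial_\tau \rho(t-\tau) = \rho(t-2\tau)-1,
\end{equation*}
completing the induction.

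The only non-routine point is the last one: the backward recursion defined purely from $m$ also automatically reproduces $\rho$ through $L_h$. This is exactly where the compatibility condition $\partial_\tau \rho = L_h m$ is consumed, and where it is important that $\rho(T)=1$ plays the role of a matching terminal datum for the second coordinate. No properties of $L_h$ beyond linearity and commutation with $\partial_\tau$ are used, so the argument is robust to the anisotropic/adaptive choice \eqref{eqdef:Lh}.
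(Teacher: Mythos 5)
Your proof is correct and spells out in full detail the argument the paper merely sketches in one sentence, namely that the result ``is a direct consequence of the time-independence of the coefficients $\lambda^e$\ldots and of the commutation of the time and space finite differences.'' Your backward-in-time recursion for surjectivity, with the compatibility check consuming $\partial_\tau\rho = L_h m$, and your injectivity argument from $\partial_\tau\psi=0$ plus $\psi(T)=0$, are exactly what one would reconstruct from that sketch.
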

We omit the proof, which is a direct consequence of the time-independence of the coefficients $\lambda^e$, for all $e \in E$, and of the commutation of the time and space finite differences \eqref{eqdef:DtDhe}.
Using these notations, we discretize the dual energy \eqref{eq:EnergyQPME} as follows
\begin{align}
\label{eqdef:porousEnergy}
	\cE^\Porous_{\tau h}(m,\rho) := 2\tau (2h)^d \sum_{\substack{t \in \cT'_\tau\\x\in \bT_h^d}} \Big(\frac{1}{2} \sum_{\sigma =\pm} \frac{m(t,x)^2}{2\rho(t+\sigma \tau ,x)} - m(t,x) u_0(x)\Big).
\end{align}
%The sign denoted $\sigma_t = \pm$ in \eqref{eqdef:porousEnergy}, and $\sigma_x=\pm$ in \eqref{eqdef:BurgersEnergy} below, does \emph{not} depend on the actual value of the time $t$ or space $x$ variable; the subscript only refers to the axis (time or space) along which it is used to introduce shift by a half timestep $t+\sigma_t \tau$ or gridscale $x+\sigma_x h$.
The second sum symbol features two contributions, corresponding to the two possible signs $\sigma=+$ and $\sigma=-$. 
By convention we set $\cE^\Porous_{\tau h}(\phi) := \cE^\Porous_{\tau h}(\partial_\tau \phi, 1+L_h \phi)$, and we show in \cref{prop:porousStrictCvx} that this energy is a strictly convex function of $\phi \in \Phi_{\tau h}^\Porous$.
Let us emphasize that the energy \eqref{eqdef:porousEnergy} implicitly involves the \emph{harmonic mean} of the density field $\rho$, at the times $t + \tau$ and $t-\tau$ associated with the two possible signs $\sigma=\pm$. This choice is deliberate, and we show in \cref{sec:arith} that using the arithmetic mean instead 
as in \cite{papadakis2014optimal,natale2021computation,liConvergenceDiscreteDynamic2024} 
leads to lesser stability properties, and to a degeneracy of the optimization problem, which becomes local in time rather than global \eqref{eq:arithScheme}.

\begin{assumption}
\label{assum:porous}
We assume that $u \in C^\infty([0,T]\times \bT^d,]0,\infty[)$ is a smooth positive solution of the QPME (\ref{eq:QPME_Burgers}, left), associated with a smooth and symmetric positive definite diffusion tensor $\cD \in C^\infty(\bT^d,\cS_d^{++})$. We denote by $\phi \in C^\infty([0,T]\times \bT^d)$ the solution of the reversed in time parabolic PDE: $\partial_t \phi = (1-\diver(\cD \nabla \phi))u$ with terminal boundary condition: $\phi(T)=0$, and we assume that $1-\diver(\cD\nabla \phi)$ is \emph{positively bounded below}.
The numerical scheme stencil $E \subset \bZ^d\sm\{0\}$ is finite, and the coefficients $\lambda^e \in C^\infty(\bT^d,[0,\infty[)$ obey (\ref{eqdef:Lh}, right) and are smooth for all $e \in E$.
\end{assumption}
%\todoi{There is a note on lesser regularity in \cref{subsec:LPorous}, but it remains high : $\phi \in C^6$, $\lambda^e \in C^5$. Discuss here ?}

\begin{theorem}
\label{th:cvPorous}
Under \cref{assum:porous}. For all $0 < \tau \leq \tau_*$ and $0<h\leq h_*$, there exists a unique minimizer $\tilde \phi \in \Phi_{\tau h}^\Porous$ of the energy $\cE_{\tau h}^\Porous$. It satisfies %, satisfying
\begin{equation*}
	\max_{t \in \cT_\tau} \|\tilde \phi(t)- \phi(t)\|_{\ell^1(\bT^d_h)} \leq C_* (\tau^2 + h^2).
\end{equation*}
%where t
The constants $\tau_*,h_*>0$ and $C_*$ only depend on $(\lambda,\phi)$. 
\end{theorem}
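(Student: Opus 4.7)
The overall strategy is the classical consistency-plus-stability paradigm, made possible by the strict convexity of $\cE_{\tau h}^\Porous$ from \cref{prop:porousStrictCvx}. Let $\phi^\star$ denote the restriction of the continuous solution $\phi$ to the grid $\cT_\tau\times\bT_h^d$. Existence and uniqueness of $\tilde\phi$ follow from the strict convexity, coercivity and lower semi-continuity of the energy (using the perspective-function structure \eqref{eqdef:perspective}). The game is then to quantify how close $\tilde\phi$ is to $\phi^\star$, by controlling the residual $\nabla\cE_{\tau h}^\Porous(\phi^\star)$ (consistency) and inverting a suitable Hessian bound near $\phi^\star$ (stability).

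\textbf{Consistency.} I compute the gradient $\nabla\cE_{\tau h}^\Porous(\phi^\star)$, which has one entry per interior grid point $(t_0,x_0)$ and decomposes into contributions from $\partial_\tau\phi$ and from $L_h\phi$ terms at the three nearby times. Substituting the change of variables $m=\partial_\tau\phi$, $\rho=1+L_h\phi$, each component is an affine combination of smooth evaluations of $\partial_\tau\phi^\star/\rho^\star$ and related quantities, mimicking the continuous Euler-Lagrange equation $\partial_t\phi=(1+L^*\phi)u$ from \cref{assum:porous}. A standard Taylor expansion of smooth functions at staggered nodes, using the classical second-order accuracy of the centered difference operators $\partial_\tau$ and $\partial_h^e$ and the assumed smoothness of $\phi,\cD,\lambda^e$, yields a pointwise bound $|\nabla_{(t_0,x_0)}\cE_{\tau h}^\Porous(\phi^\star)|\le C(\tau^2+h^2)$ (in the scaled $\ell^\infty$ or $\ell^2$ grid norm), provided $\rho^\star$ stays uniformly positive, which is exactly the hypothesis that $1-\diver(\cD\nabla\phi)$ is bounded away from zero in \cref{assum:porous}.

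\textbf{Stability / strong convexity.} A direct computation of the second variation of $\phi\mapsto\cE_{\tau h}^\Porous(\partial_\tau\phi,1+L_h\phi)$ in a direction $\psi\in\Phi_{\tau h}^\Porous$ gives, schematically,
\begin{equation*}
\langle\nabla^2\cE_{\tau h}^\Porous(\phi)\psi,\psi\rangle
= 2\tau(2h)^d\!\!\sum_{t\in\cT'_\tau,\,x\in\bT_h^d}\!\frac{1}{2}\sum_{\sigma=\pm}\frac{1}{\rho(t+\sigma\tau,x)}\bigl(\partial_\tau\psi-u(t+\sigma\tau,x)L_h\psi(t+\sigma\tau,x)\cdot(\cdot)\bigr)^2,
\end{equation*}
i.e.\ the (weighted) discrete $L^2$ norm squared of the linear operator $\psi\mapsto\partial_\tau\psi-uL_h\psi$, evaluated at a configuration close to $(\phi^\star,\rho^\star)$. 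The key point is that this operator, with terminal condition $\psi(T)=0$, is the discretization of the backward-in-time parabolic operator $\partial_t-uL$; after the change of variable $s=T-t$ this becomes a forward parabolic problem with positive diffusion $u\cdot(-\diver(\cD\nabla\cdot))$, hence well-posed. The discrete version requires an \emph{unconditional} energy estimate: for any $g$ on the staggered grid, the solution $\psi$ of $\partial_\tau\psi-uL_h\psi=g$ with $\psi(T)=0$ satisfies $\|\psi\|_{\ell^2}\le C\|g\|_{\ell^2}$, with $C$ independent of $\tau,h$. This is obtained by testing against $\psi$ and exploiting the positivity of $\cD$ together with the summation-by-parts structure of $L_h$ from \eqref{eqdef:Lh}; the absence of a CFL condition here reflects the implicit staggered choice, where both signs $\sigma=\pm$ appear symmetrically in \eqref{eqdef:porousEnergy}. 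This yields the uniform bound $\langle\nabla^2\cE_{\tau h}^\Porous(\phi)\psi,\psi\rangle\ge \alpha\|\psi\|_{\ell^2}^2$ in a neighborhood of $\phi^\star$ where $\rho$ stays close to $\rho^\star$.

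\textbf{Conclusion.} I expect the \emph{discrete parabolic estimate} above to be the real work, because it must be uniform in $\tau,h$ and robust under a perturbation of the coefficient $u$; one should probably prove a slightly more general estimate with variable but bounded coefficients. Once stability and consistency are in hand, a standard convex-analytic argument concludes: by strong convexity, $\|\nabla\cE_{\tau h}^\Porous(\phi^\star)-\nabla\cE_{\tau h}^\Porous(\tilde\phi)\|_{\ell^2}\ge \alpha\|\phi^\star-\tilde\phi\|_{\ell^2}$, and since $\nabla\cE_{\tau h}^\Porous(\tilde\phi)=0$ by optimality, we get $\|\tilde\phi-\phi^\star\|_{\ell^2}\le\alpha^{-1}\|\nabla\cE_{\tau h}^\Porous(\phi^\star)\|_{\ell^2}\le C_*(\tau^2+h^2)$. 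A Cauchy-Schwarz estimate on the compact torus converts this into the announced $\ell^1(\bT_h^d)$ bound uniformly in $t\in\cT_\tau$. A short bootstrap is needed to guarantee $1+L_h\tilde\phi>0$ (so the energy is finite and the Hessian bound applies) for sufficiently small $\tau,h$: start from the explicit positivity of $\rho^\star$, use a rough a priori estimate to put $\tilde\phi$ in a neighborhood of $\phi^\star$ in a stronger grid norm, then close the argument.
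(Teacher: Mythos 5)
Your overall template (consistency of the discrete Euler--Lagrange equations at $\phi^\star$, plus a stability lower bound for the linearised operator) is a reasonable starting point, but it departs from the paper's argument in a way that leaves two genuine gaps.

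\textbf{The local Hessian bound creates a circularity the paper deliberately avoids.} You propose $\langle\nabla^2\cE(\phi)\psi,\psi\rangle\ge\alpha\|\psi\|_{\ell^2}^2$ near $\phi^\star$ and then deduce $\|\tilde\phi-\phi^\star\|\le\alpha^{-1}\|\nabla\cE(\phi^\star)\|$. But that second inequality needs the Hessian bound along the entire segment $[\tilde\phi,\phi^\star]$, which presupposes $\tilde\phi$ is already close to $\phi^\star$ (so that $1+L_h\phi$ stays positive and the Hessian is even defined) --- exactly the ``short bootstrap'' you mention but do not close. The paper sidesteps this by never Taylor-expanding: it uses the \emph{exact} perspective-function identity $\cE(\phi+\hat\phi)=\cE(\phi)+\cL(\hat\phi;\phi)+\cQ(\hat\phi;\phi)$, where $\cQ$ is the exact remainder (a sum of perspective functions with denominator $\rho+\hat\rho$, not $\rho$). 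Since $\cQ\ge0$ identically and is $+\infty$ whenever the perturbed density goes negative, the inequality $\cL+\cQ\le0$ holds for \emph{any} energy-improving perturbation, not just small ones, and directly forces $\|\hat\phi(t)\|_1\le C_*(\tau^2+h^2)$. No neighbourhood restriction, no bootstrap.

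\textbf{The $\ell^2$ parabolic estimate is not as routine as your sketch suggests, and it is not what the paper does.} You want $\|\psi\|\lesssim\|(\partial_\tau-uL_h)\psi\|$ in $\ell^2$, ``by testing against $\psi$.'' With variable coefficient $u$, $\langle u\psi,L_h\psi\rangle$ is not sign-definite: expanding $L_h=\sum_e\partial_h^e(\lambda^e\partial_h^e\cdot)$ and applying the discrete product rule produces commutator contributions $\langle(\partial_h^e u)\psi,\lambda^e\partial_h^e\psi\rangle$ of ambiguous sign that must be absorbed by Gr\"onwall and possibly by a lower bound on $u$; moreover the sum over $\sigma=\pm$ mixes an explicit and an implicit scheme, and only the implicit one ($\sigma=-$) is unconditionally stable. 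The paper isolates $\sigma=-$ and works entirely in $\ell^1$, where the Minkowski lemma (inverse positivity of $\Id+\tau uL_h$, Prop.~\ref{prop:implLap}) gives an unconditional bound $\|(\Id+\tau uL_h)^{-1}f\|_1\le(1-\tau\|L_hu\|_\infty)^{-1}\|f\|_1$ valid for any $u\ge0$ with no energy-method machinery; a discrete Duhamel formula then yields $\|\hat\phi(t)\|_1\le C_\cQ R$ with $R:=2\tau\sum_{t\in\cT'}\|r^-(t)\|_1$, and the lower bound $\cQ\ge R^2/(2T)$ follows from Jensen's inequality for the perspective function and the mass normalization $\|\rho+\hat\rho\|_1=1$. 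Your consistency step (recognising the discrete Euler--Lagrange residual as a second-order centred discretisation of the QPME, relying on the positivity of $1-\diver(\cD\nabla\phi)$) does match Prop.~\ref{prop:consistency} in spirit, but the stability half and the assembly of the two halves are fundamentally different from the paper's, and in their current form incomplete.
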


%\todoerwan{nitpicking: $\phi$ is also used as any variable, is it nescessary to put a superscript? JM : I do not understand.
%
%Maybe add in the theorem that existence of a minimizer is obtained as a byproduct of the proof?
%JM : changed the formulation to ``there exists a unique'' (instead of ``the unique''), which should be clearer.
%}
A uniformly converging approximation of the primal PDE solution $u = m/\rho = \partial_t \phi / (1-\diver(\cD \nabla \phi))$ can also be recovered from $\tilde \phi$, using a regularization by convolution to estimate its derivatives, see \cref{sec:conv}. In the numerical experiments \cref{sec:num}, a second order convergence rate is empirically observed for the primal variable $u$, without the need for such a convolution.

\paragraph{Discretization of Burgers' equation.} This equation is built around a first order, rather than second order, differentiation operator in space, and for this reason the problem density $\rho$ is discretized on an appropriately staggered grid.
%DIF > The next lemma is a counterpart to \cref{lem:domPorous} for Burgers' equation.
\begin{lemma}
\label{lem:domBurgers}
	The set $\Phi^\Burgers_{\tau h}:= \{\phi \in \bR^{\cT_\tau \times \bT_h} \mid \phi(T)=0\}$
	is in bijection with 
	$\{(m,\rho) \in \bR^{\cT'_\tau \times \bT_h} \times \bR^{\cT_\tau \times \bT'_h} \mid \partial_\tau \rho + \partial_h m = 0,\ \rho(T)=1\}$, 
	via $\phi \mapsto (\partial_\tau \phi, 1-\partial_h \phi)$.  
\end{lemma}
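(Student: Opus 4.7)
The plan is to mirror the proof of \cref{lem:domPorous}, keeping careful track of the space staggering. I would decompose the argument into three steps: forward well-definedness, reconstruction of $\phi$ from $m$ alone, and compatibility with $\rho$.

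\textbf{Forward direction.} Given $\phi \in \Phi^\Burgers_{\tau h}$, set $m:=\partial_\tau\phi$ and $\rho:=1-\partial_h\phi$. From the definitions \eqref{eqdef:DtDhe}, $\partial_\tau\phi(t,x)$ only makes sense when $t\pm\tau\in\cT_\tau$ and $x\in\bT_h$, i.e.\ on $\cT'_\tau\times\bT_h$; similarly $\partial_h\phi$ lives on $\cT_\tau\times\bT'_h$, so $m$ and $\rho$ sit on the required grids. The terminal condition $\rho(T)=1$ follows from $\phi(T)=0$, which also kills $\partial_h\phi(T)$. The continuity equation reduces to the commutation of the two centered finite differences:
\begin{equation*}
\partial_\tau\rho+\partial_h m = -\partial_\tau\partial_h\phi + \partial_h\partial_\tau\phi = 0
\qquad\text{on } \cT'_\tau\times\bT'_h,
\end{equation*}
which is immediate from \eqref{eqdef:DtDhe}.

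\textbf{Inverse construction.} Given $(m,\rho)$ in the target set, I build $\phi$ from $m$ alone by backward induction in time, starting from $\phi(T,\cdot)=0$ and using the relation $\phi(t-2\tau,x)=\phi(t,x)-2\tau\,m(t-\tau,x)$ for each $t\in\cT_\tau\cap[2\tau,T]$ and $x\in\bT_h$. This produces a unique $\hat\phi\in\Phi^\Burgers_{\tau h}$ with $\partial_\tau\hat\phi=m$, proving injectivity of $\phi\mapsto\partial_\tau\phi$ on $\Phi^\Burgers_{\tau h}$ in passing.

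\textbf{Compatibility.} It remains to show $1-\partial_h\hat\phi=\rho$. Set $\tilde\rho:=1-\partial_h\hat\phi$. By construction and commutation, $\tilde\rho$ obeys $\partial_\tau\tilde\rho+\partial_h m=0$ on $\cT'_\tau\times\bT'_h$, and $\tilde\rho(T,\cdot)=1$. The given $\rho$ satisfies the same equation with the same terminal value. Rewriting $\partial_\tau\rho=-\partial_h m$ at $t=T-\tau,T-3\tau,\dots$ as $\rho(t-2\tau,x)=\rho(t,x)+2\tau\,\partial_h m(t-\tau,x)$ for $x\in\bT'_h$ shows that $\rho$ is uniquely determined on $\cT_\tau\times\bT'_h$ by $m$ and its terminal value; therefore $\rho=\tilde\rho$, and $\hat\phi$ is the desired preimage.

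The only real piece of bookkeeping, and the step where a sign or parity slip would be fatal, is ensuring that the backward recurrences for $\phi$ and for $\rho$ stay inside the prescribed staggered grids $\cT_\tau,\cT'_\tau,\bT_h,\bT'_h$; but \cref{assum:step} guarantees an integer number of half-steps, so the induction closes cleanly at $t=0$.
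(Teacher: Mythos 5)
Your proof is correct and follows exactly the route the paper itself indicates (the paper omits the proof, stating it ``directly follows from the commutation of the finite difference operators $\partial_\tau$ and $\partial_h$''), spelling out the staggering bookkeeping and the backward recursion that the paper leaves implicit. The only nit is a small notational slip in the compatibility step: in ``Rewriting $\partial_\tau\rho=-\partial_h m$ at $t=T-\tau,T-3\tau,\dots$ as $\rho(t-2\tau,x)=\rho(t,x)+2\tau\,\partial_h m(t-\tau,x)$'' the first $t$ ranges over $\cT'_\tau$ while the second must range over $\cT_\tau\cap[2\tau,T]$, so the same letter is doing double duty; the underlying recursion is nevertheless correct.
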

We again omit the proof, which directly follows from the commutation of the finite difference operators $\partial_\tau$ and $\partial_h$, similarly to
\cref{lem:domPorous}. Using these notations we discretize (\ref{eq:EnergyQPME}, second line): %DIF >  as follows
\begin{equation}
\label{eqdef:BurgersEnergy}
	\cE_{\tau h}^\Burgers(m,\rho) := 4 \tau h\sum_{\substack{t \in \cT'_\tau\\x\in \bT_h}} 
	\Big(\frac{1}{4} \sum_{\substack{\sigma_t =\pm\\ \sigma_x=\pm}} 
	\frac{\big(m(t,x) - \nu \partial_h \rho(t+\sigma_t \tau, x)\big)^2}{2\rho(t+\sigma_t \tau ,x+\sigma_x h)} - m(t,x) u_0(x)\Big).
\end{equation}
By convention we set $\cE_{\tau h}^\Burgers(\phi) := \cE_{\tau h}^\Burgers(\partial_\tau \phi, 1-\partial_h \phi)$, and we show in \cref{prop:BurgersStrictCvx} that this energy is a strictly convex function of $\phi \in \Phi_{\tau h}^\Burgers$. The energy \eqref{eqdef:BurgersEnergy} implicitly involves the harmonic mean of four values of the density field $\rho$, at all combinations of the times $t \pm \sigma_t \tau$ and positions $x \pm \sigma_x h$, and this is again the key to the proof of stability and convergence.
In the discretization of the Benamou-Brenier formulation of optimal transport, a closely related energy is considered, with $\nu=0$ and $u_0=0$, and a Cauchy rather than ballistic problem in time. Somewhat curiously, the arithmetic mean is usually chosen in this context \cite{papadakis2014optimal,natale2021computation,liConvergenceDiscreteDynamic2024} for the interpolation of the density w.r.t.\ time. 
Let us mention that \cite{natale2021computation} observes improved stability and less oscillations in the solution when using harmonic (as opposed to arithmetic) interpolation in \emph{space}, yet only arithmetic interpolation is considered in time.
% in comparison with the arithmetic mean. 
% with the exception of \cite{} which observes that the harmonic mean leads to improved stability and less oscillations in the solution in comparison with the arithmetic mean. 

\begin{assumption}
\label{assum:Burgers}
We assume that $u \in C^\infty([0,T]\times \bT,]0,\infty[)$ is a smooth positive solution of Burgers' equation (\ref{eq:QPME_Burgers}, right), with non-negative viscosity $\nu \geq 0$ (possibly zero). We denote by $\phi \in C^\infty([0,T]\times \bT)$ the solution of the reversed in time first order linear PDE: $\partial_t \phi + \nu \partial_{xx} \phi = (1-\partial_x \phi)u$ with b.c. $\phi(T)=0$, and we assume that $1-\partial_x \phi$ is \emph{positively bounded below}. 
\end{assumption}

The assumption of a smooth solution is not too restrictive in the case of the QPME, see \cref{assum:porous}, since this PDE has a regularizing effect on positive solutions \cite{vazquez2007porous}, and likewise for Burgers' equation with positive viscosity.
In contrast, it appears highly questionable for the inviscid Burgers' equation ($\nu=0$), which develops a shock in finite time $T_{\mathrm{shock}}>0$, even if the initial condition is smooth.
However, strikingly and unfortunately, it turns out that the BBB approach on the interval $[0,T]$ does \emph{not} characterize the (viscosity) solution $u$ to Burgers' equation if $T> T_{\mathrm{shock}}$, but rather a modified solution $u^T$ depending on the final time $T$.
More precisely,
\cite[Theorem 5.5.2]{brenier2020examples}
characterizes $u^T$ as the unique shock free solution of Burgers' equation on $[0,T]$ such that $u^T(T,\cdot) = u(T,\cdot)$, see 
\cref{fig:inviscid}
page \pageref{fig:inviscid} for a numerical illustration and additional discussion.
The correct entropy solution is therefore still obtained at the \emph{final} time $t=T$, although it is not for $t<T$.
 %DIF > In other words $u^T(T,x) = u(T,x)$, yet in general $u^T(t,x)\neq u(t,x)$ for $t\in [0,T[$, and in particular $u^T(0,x)\neq u_0(x)$, see \cite[Theorem 5.2.2]{brenier2020examples} or \cite{brenier2018initial}, as well as \cref{fig:inviscid} page \pageref{fig:inviscid} for a numerical illustration.
The critical assumption $1-\partial_x \phi > 0$ is also violated if $T>T_{\mathrm{shock}}$.)
A numerical method for Burgers' equation proposed in \cite{kouskiya2024inviscid} adresses this issue by concatenating the numerical solutions of the BBB formulation discretized over small sub-intervals of time, see \cref{remark:finalT}. Proving convergence in this setting is however outside the scope of this paper, and we thus stick to \cref{assum:Burgers}, which implicitly implies $T< T_{\mathrm{shock}}$ in the inviscid case.

\begin{theorem}
\label{th:cvBurgers} Under \cref{assum:Burgers}. For all $0 < \tau \leq \tau_*$ and $0<h\leq h_*$, there exists a unique minimizer $\tilde \phi \in \Phi_{\tau h}^\Burgers$ of the energy $\cE_{\tau h}^\Burgers$. It satisfies 
\begin{align*}
	\max_{t \in \cT_\tau}\, \| \tilde \phi(t)-\phi(t)\|_{\ell^1(\bT_h)} 
	\leq C_* (\tau^2 + h^2).
\end{align*}
The constants $\tau_*,h_*>0$ and $C_*$ only depend on $\phi$. 
\end{theorem}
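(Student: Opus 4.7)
The plan is to mirror the three-step approach used for \cref{th:cvPorous}: first establish existence and uniqueness of the discrete minimizer $\tilde\phi$ via convexity and coercivity, then derive the error bound through a combination of a consistency estimate and a uniform strong-convexity estimate for $\cE_{\tau h}^\Burgers$ around the reference solution $\phi$.

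For existence and uniqueness, strict convexity is already supplied by \cref{prop:BurgersStrictCvx}, so only coercivity on the finite-dimensional space $\Phi^\Burgers_{\tau h}$ needs to be checked. Using the bijection of \cref{lem:domBurgers}, along any sequence $(\phi_n)$ with $\|\phi_n\|\to\infty$, either some $\rho_n(t,x) = 1-\partial_h \phi_n(t,x)$ is driven out of $]0,+\infty[$, in which case the perspective function $\cP$ sends $\cE_{\tau h}^\Burgers(\phi_n)$ to $+\infty$, or $\rho_n$ stays bounded but then $m_n = \partial_\tau \phi_n$ must diverge and the quadratic-over-linear term $(m_n-\nu\partial_h\rho_n)^2/\rho_n$ dominates the linear perturbation $-m_n u_0$. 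Lower semi-continuity of $\cP$ and strict convexity then yield a unique minimizer.

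For the error bound, I would write the Euler--Lagrange equations of $\cE_{\tau h}^\Burgers$ and interpret them as a discrete analogue of \eqref{eq:pdePhi}, i.e.\ a linear two-level-in-time system in $\phi$. Plugging the smooth exact solution $\phi$ (restricted to $\cT_\tau \times \bT_h$) into these equations produces, by Taylor expansion and the second-order accuracy of the centered operators $\partial_\tau$ and $\partial_h$, a residual of size $O(\tau^2+h^2)$ in a suitable discrete norm. This relies in an essential way on the fact that every denominator in \eqref{eqdef:BurgersEnergy} is a symmetric four-point harmonic mean centered at $(t,x)$, and that $\partial_h\rho$ is evaluated at points that are also symmetric around $(t,x)$, so that the scheme has no first-order error. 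To convert this residual into the announced bound, I would combine it with a strong-convexity estimate around $\phi$: the assumption $1-\partial_x \phi \geq c_0 > 0$ propagates, for $h$ small enough, to a uniform lower bound $\rho \geq c_0/2$ on the corresponding discrete density, and the Hessian of $\cE_{\tau h}^\Burgers$ at $\phi$ can then be controlled from below, after discrete integration by parts, by a positive quadratic form in $(\partial_\tau \delta\phi, \partial_h \delta\phi, \partial_{hh}\delta\phi)$ with $(\tau,h)$-independent constants. This gives $\|\tilde\phi-\phi\|^2 \lesssim$ residual in a discrete $H^1$-type norm, and a discrete Poincaré/summation inequality in time transfers it to $\max_{t\in\cT_\tau}\|\tilde\phi(t)-\phi(t)\|_{\ell^1(\bT_h)} \leq C_*(\tau^2+h^2)$.

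The main obstacle I anticipate is this strong-convexity step: the viscosity term introduces a coupling $m - \nu \partial_h \rho$ that mixes $\partial_\tau \phi$ and $\partial_{hh}\phi$ inside the perspective function, and the four-point harmonic mean creates an anisotropic, non-local stencil around each $(t,x)$. Rearranging the Hessian so that the cross terms are absorbed into a manifestly positive sum of squares---while keeping the lower bound uniform in $(\tau,h)$ and valid on a whole neighborhood of $\phi$ containing $\tilde\phi$ (typically via a bootstrap/continuation argument that maintains $1-\partial_h \tilde\phi \geq c_0/2$)---is where most of the technical work lies. Once this uniform coercivity is in place, the remaining steps are essentially parallel to the QPME case treated in \cref{th:cvPorous}, with the extra $\nu\partial_h\rho$ contributions treated as lower-order perturbations controlled by the same discrete $H^1$-type norm.
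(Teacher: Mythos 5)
There is a genuine gap in the error-bound step, and it stems from a structural misconception. You propose a strong-convexity (Hessian coercivity) argument: bound the Hessian of $\cE_{\tau h}^\Burgers$ at $\phi$ below by a uniform positive quadratic form in $(\partial_\tau\delta\phi,\partial_h\delta\phi,\partial_{hh}\delta\phi)$, then transfer the residual bound to $\|\tilde\phi-\phi\|$ via a discrete Poincar\'e inequality. This cannot work as stated: the perspective function $\cP(m,\rho)$ is positively $1$-homogeneous, so its Hessian at any point with $\rho>0$ is rank one and annihilates the direction $(\hat m,\hat\rho)=(u\hat\rho,\hat\rho)$ with $u=m/\rho$. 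Consequently the second derivative of $\cE_{\tau h}^\Burgers$ at $\phi$ vanishes on a large subspace of perturbations $\hat\phi$ with $\partial_\tau\hat\phi \approx -u\,\partial_h\hat\phi$ (up to the viscous and shift corrections), and no uniform quadratic lower bound in a discrete $H^1$-type norm of $\hat\phi$ exists. Strict convexity of $\cE_{\tau h}^\Burgers$ (\cref{prop:BurgersStrictCvx}) is genuinely nonquantitative, established by a time-cascade argument rather than Hessian positivity, and the same remark applies to the QPME proof in \cref{sec:cv}: it also does \emph{not} use strong convexity.

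The missing idea is the interpretation of the second-order term $\cQ(\hat\phi;\phi)$ in the exact expansion of \cref{prop:perspective}. Each perspective function in $\cQ$ has as its first argument a quantity $r^{\sigma_t}_{\sigma_x}(t,x)$ (see \eqref{eq:BurgersLinScheme}--\eqref{eq:BurgersLinScheme2}), which the paper recognizes, for the implicit/upwind sign choice $\sigma_t=-$, $\sigma_x=+$, as the per-step residual of a backward-in-time drift--diffusion scheme in the unknown $\hat\phi$, namely \eqref{eq:implLinBurgers}. The $\ell^1$-stability of that scheme is established by a maximum-principle / M-matrix argument (\cref{prop:implBurgers}, using \cref{lem:Minkowski}), yielding $\|\hat\phi(t)\|_1 \leq C_\cQ R$ with $R := 2\tau\sum_{t\in\cT'}\|r^-_+(t)\|_1$. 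On the other side, $\cQ(\hat\phi;\phi)\geq R^2/(2T)$ by \cref{lem:scalarPersp} and the normalization $\|\rho+\hat\rho\|_1=2$. Combined with the consistency bound $\cL(\hat\phi;\phi)\geq -C'_\cL(\tau^2+h^2)R$ obtained from the discrete integration by parts \eqref{eq:burgersIPP} and \cref{prop:consistencyBurgers}, the inequality $0\geq\cL+\cQ\geq -C'_\cL(\tau^2+h^2)R+R^2/(2T)$ closes the argument without ever invoking $H^1$-coercivity. This also renders your coercivity proof of existence unnecessary: the derived bound confines the minimization to a compact neighborhood of $\phi$, where existence follows from lower semi-continuity and uniqueness from \cref{prop:BurgersStrictCvx}, exactly as in \eqref{eq:minPclose}.

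Your identification of the symmetric four-point harmonic mean and the symmetric evaluation of $\partial_h\rho$ as the source of second-order accuracy is correct, and matches \cref{prop:consistencyBurgers}, where the sums over $(\sigma_t,\sigma_x)$ restore the $\tau\mapsto-\tau$ and $h\mapsto-h$ symmetries. But the consistency step only controls the \emph{linear} term $\cL$; the handling of $\cQ$ requires the stability argument above, not Hessian positivity.
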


\paragraph{Minimization algorithm and implementation.}
We numerically minimize the energy \eqref{eqdef:porousEnergy} (resp.\ \eqref{eqdef:BurgersEnergy}), w.r.t.\ the momentum $m$ and density $\rho$ variables subject to the continuity equation discretized as in \cref{lem:domPorous} (resp.\ \cref{lem:domBurgers}). For that purpose, we use the classical Chambolle-Pock \cite{chambolle2011primaldual} proximal splitting primal-dual optimization algorithm, similarly to \cite{papadakis2014optimal}. This requires a reformulation of the objective functional using carefully chosen additional variables. 
(A Newton solver can also be used for small problem sizes, following \cite{kouskiya2024inviscid}.)

For instance we introduce an anti-symmetric (resp.\ symmetric) extension of the optimization unknown $m$ (resp.\ $\rho$) to negative times, which has two benefits. First, it decouples the perspective functions appearing in \eqref{eqdef:porousEnergy}, so that the corresponding proximal operator is computable independently for each point in time and space. 
Second, it allows using a global FFT in time and space, in order to compute the orthogonal projection onto the linear subspace associated with the discretized continuity equation, see \cref{lem:domPorous,lem:domBurgers}. In a similar spirit, additional unknowns $m_e = \partial_h^e m$, $e\in E$, containing the directional derivatives of the momentum in the stencil directions, are required for the anisotropic QPME.

The proposed numerical method, being embarrassingly parallel in \emph{space and time}, is easily ported to GPUs. Experiments confirm second order accuracy, and illustrate the use of large time steps in contrast with e.g.\ the explicit scheme, which is subject to a parabolic type  Courant-Friedrichs-Levy (CFL) stability condition, or the semi-implicit scheme. See \cref{sec:implem,sec:num}. % for details.

%\paragraph{Contributions.}
%We present the first numerical convergence analysis of the ballistic Benamou-Brenier formulation of non-linear evolution PDEs, focusing on the QPME (possibly anisotropic) and Burgers' equation (possibly viscous), see \cref{th:cvPorous,th:cvBurgers}. 
%Finally, an explicit solution to the BBB formulation of the QPME, corresponding to the Barenblatt profile, is presented in \cref{sec:Barenblatt}.

\paragraph{Contributions and comparisons with related work.}
The ballistic Benamou-Brenier (BBB) formulation of non-linear PDEs is a subject of active research in the continuous setting \cite{brenier2020examples,vorotnikov2022partial,vorotnikov2025hidden,acharya2023variational,acharya2024action,acharya2024variational}.
The discretization of this approach has so far only been considered for one-dimensional problems and without proof of convergence under mesh refinement: for Burgers' equation \cite{brenier2020examples,kouskiya2024inviscid}, and for non-linear elasticity \cite{singh2024hidden}.
This paper extends the numerical study to the QPME which is a multidimensional and possibly anisotropic problem, in addition to Burgers' equation, and establishes second order convergence rates when the solution is smooth, see \cref{th:cvPorous,th:cvBurgers}. 

Our results can also be related to the discretization \cite{papadakis2014optimal} and convergence analysis of the Benamou-Brenier formulation of optimal transport 
\cite{benamou2000fluidOT}.
In contrast with some of these works, we assume the existence of a smooth solution of the PDE, which is justified in view of the regularizing effect of the QPME on positive solutions (numerical experiments \cref{sec:num} nevertheless show that the scheme also works on non-smooth compactly supported solutions), and likewise for the viscous Burgers' equation (the inviscid case being subject to an obstruction discussed below \cref{assum:Burgers}). 
Our numerical scheme involves an harmonic (rather than arithmetic) local interpolation of the density w.r.t.\ time, see \cref{sec:arith}, in contrast with earlier works. 
%with the exception of \cite{natale2021computation} which compares the two possible means. 
The implementation of the proximal optimization method is modified accordingly, and also to address the ballistic time-boundary conditions, and the possibly anisotropic spatial finite difference operators. 

The discretization of the anisotropy \eqref{eq:LhExpl}, via adaptive decompositions of the anisotropic diffusion tensor field, is in the vein of
\cite{fehrenbach2013diffusion,bonnans2022randers,bonnans2023monotone,mirebeau2025acoustic}.

\paragraph{Summary.}
Our main convergence result \cref{th:cvPorous} for the QPME is established in \cref{sec:cv}, the proximal optimization method for the energies \eqref{eqdef:porousEnergy} and \eqref{eqdef:BurgersEnergy} is described in \cref{sec:implem}, and numerical experiments are presented in \cref{sec:num}. \Cref{sec:Burgers} is devoted to the convergence result \cref{th:cvBurgers} for the discretization of Burgers' equation, whose proof is a small adaptation of the porous
medium case.
\Cref{sec:arith} discusses a modification of the energy \eqref{eqdef:porousEnergy} involving the arithmetic interpolation of the density field, and \cref{sec:conv} deduces from \cref{th:cvPorous,th:cvBurgers} and a mollification argument, a uniform convergence result towards the solution $u$ of the non-linear PDE of interest \eqref{eq:QPME_Burgers}.
Finally, an explicit solution to the BBB formulation of the QPME, corresponding to the Barenblatt profile, is presented in \cref{sec:Barenblatt}, and a well-posedness result for the BBB formulation of Burgers' viscous equation is given in \cref{sec:nogapBV}.

\begin{remark}[Positivity of the density and final evolution time] 
\label{remark:finalT}
The BBB formulation requires that the dual potential $\phi$ obeying the time-reversed evolution PDE \eqref{eq:pdePhi} satisfies (at the very least) $1 + L^*\phi \geq 0$ almost everywhere in time and space, see \eqref{eq:energyPhi}. Depending on the PDE of interest, this condition is typically satisfied for small $T$ in view of the terminal condition $\Phi(T,\cdot) = 0$, but may fail for larger time intervals. 
See \cite{brenier2020examples,vorotnikov2022partial} for more discussion on this topic, which is directly related with the absence of duality gap in \eqref{eq:gap}.

In the case of the QPME, a non-trivial argument involving Aronson-Bénilan estimates \cite[Theorem 5.1.1]{brenier2020examples} fortunately shows that an arbitrarily large time $T$ is admissible. 
Likewise for Burgers' equation with positive viscosity, see \cite[Proposition 5.2.1]{brenier2020examples} and \cref{sec:nogapBV}. % which is closely related with .
%following \cite[Proposition 5.2.1]{brenier2020examples}: differentiating $\partial_t \phi + \nu \partial_{xx} \phi = (1-\partial_x \phi) u$ w.r.t.\ $x$ and denoting $\rho := 1-\partial_x \phi$, we obtain
%\begin{align*}
%	-(\partial_t \rho + \nu \partial_{xx} \rho) &= \partial_x (\rho u), &
%	\rho(T) &= 1,
%\end{align*}
%a backward transport diffusion equation whose solution $\rho$ is \emph{positive} as required.
However, in the case of the inviscid Burgers' equation, the largest admissible time $T$ is typically finite and corresponds to the time of formation of the first shock, as discussed below \cref{assum:Burgers}.

Two approaches aimed at extending the time $T$ of applicability of the BBB formulation, for a selection of PDEs related with fluid mechanics and which are not considered in this paper, have been proposed. Vorotnikov \cite[Remark 4.2]{vorotnikov2025hidden} minimizes
$\int_{[0,T] \times \bT^d} u(t,x)^2 \allowbreak \exp(-\gamma t) \allowbreak \diff x \diff t$ among all weak solutions of the PDE of interest, 
where $\gamma>0$ is a sufficiently large discount factor,
rather than the unweighted kinetic energy \eqref{eq:minL2weak} corresponding to $\gamma=0$. 
Acharya et al \cite{acharya2024variational} minimize $\int_{[0,T] \times \bT^d} (u - \overline u)^2$, where $\overline u$ is a well chosen \emph{base state}, which can be regarded as an initial.
These modifications ensure that there is no duality gap in the corresponding optimization problems, similar to \eqref{eq:gap}, for a large time $T$. 
In numerical applications, 
\cite{kouskiya2024inviscid}
 also replace the full interval $[0,T]$ with a succession of overlapping small sub-intervals.

%Finally, in the context of the numerical solution of Burgers' equation, Acharya et al \cite{kouskiya2024inviscid} solve a sequence of problems on small time sub-intervals 
%$\cT_\tau^k := \{k N_0 2\tau, \cdots, k N_0 2\tau + N_1 2\tau\}$ for $0 \leq k < T/(N_0 2\tau)$, where $T$ is the desired total solution time, and $N_0=5$ and $N_1=100$ typically. The $N_0$ first solutions values on $\cT_\tau^k$ are collected, and the $N_1-N_0$ last values are used to construct a base state (see above) and an initial guess on the next time sub-interval $\cT_\tau^{k+1}$.
\end{remark}

 \begin{remark}[Decompositions of symmetric positive definite matrices, with integer offsets]
\label{rem:decompSDP}
For each $D \in \cS_d^{++}$ there exists non-negative weights $\lambda_D : \bZ^d \to \bR$ with finite support $\# \supp(\lambda_D) \leq d (d+1)/2$ such that $D = \sum_{e \in \bZ^d} \lambda_D(e) e e^\top$.
Selling 
\cite{Selling:1874Algorithm}
proposed an explicit construction of such weights in dimension $d \leq 3$, which can be extended to higher dimension using the framework of Voronoi's first reduction 
\cite{voronoi1908parfaites}
, see the discussion in
\cite{bonnans2023monotone}.
One defect of these approaches is that the weights $\lambda_D(e)$ are piecewise linear w.r.t.\ $D$ in Selling's case, and are not uniquely determined in Voronoi's case, hence are non-smooth which can possibly degrade the convergence rate of the numerical schemes in which they are used.
An alternative decomposition $\tilde \lambda_D$ with $C^\infty$ smooth coefficients w.r.t.\ $D \in \cS_d^{++}$ is presented in
\cite{bonnans2023monotone}
in dimension $d=2$ satisfying $\# \supp(\tilde \lambda_D) \leq 4$, and in
\cite{mirebeau2025acoustic}
in dimension $d\geq 3$ satisfying $\# \supp(\tilde \lambda_D) \leq C(d)$ (numerical experiments suggest that $C(3)=13$).
In addition to the present paper, we refer to
\cite{fehrenbach2013diffusion,krylov2005rate,bonnans2023monotone,mirebeau2025acoustic}
and references therein for the construction and the convergence analysis of numerical schemes based on such decompositions of positive definite matrices.
\end{remark}

\section{Convergence analysis, quadratic porous medium equation}
\label{sec:cv}
This section is devoted to the proof of our main result: the unconditional second order convergence rate \cref{th:cvPorous} for the proposed discretization of the quadratic porous medium equation.
We begin by recalling an exact expansion satisfied by the perspective function  \cite{combettes2018perspective}.

\begin{proposition}
\label{prop:perspective}
	Let $m,\hat m, \rho,\hat \rho \in \bR$ be such that $\cP(m,\rho)<\infty$. Then 
	\vspace{-3mm}
\begin{align*}	
%\label{eq:PExact}
	\cP(m+\hat m, \rho+\hat \rho) &= \cP(m,\rho) + 
	(u\hat m  -\tfrac{1}{2} u^2 \hat \rho )
	%\<\nabla \cP(m,\rho), (\hat m,\hat \rho)\> 
	+ \cP(\hat m -\hat \rho u, \rho + \hat \rho), &
	\text{with } u &:= \begin{cases}
		m/\rho\ \ \ \text{ if } \rho>0,\\
		0\  \text{ if } m=\rho=0.
	\end{cases}
\end{align*}
\end{proposition}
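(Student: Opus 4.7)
The identity is an algebraic one, so I would proceed by a careful case analysis based on the value of $(\rho,\hat\rho)$, using the definition of $\cP$ in \eqref{eqdef:perspective}. The hypothesis $\cP(m,\rho)<\infty$ leaves exactly two possibilities: either $\rho>0$, or $m=\rho=0$.

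In the degenerate case $m=\rho=0$, one has $u=0$ and both the correction term $u\hat m-\tfrac12u^2\hat\rho$ and $\cP(m,\rho)$ vanish, while $\hat m-\hat\rho u=\hat m$ and $\rho+\hat\rho=\hat\rho$, so the identity collapses to $\cP(\hat m,\hat\rho)=\cP(\hat m,\hat\rho)$. The substantive case is $\rho>0$, where $u=m/\rho$ is well-defined and $m=u\rho$. I would then split according to the sign of $\rho+\hat\rho$.

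If $\rho+\hat\rho>0$, both perspective functions on the two sides are given by their smooth quadratic formula, and the identity becomes a polynomial identity in the real numbers $u,\rho,\hat m,\hat\rho$. The natural way to verify it, and the way I would present the computation, is to introduce the residual velocity $\hat u:=(\hat m-u\hat\rho)/(\rho+\hat\rho)$, so that $m+\hat m=(u+\hat u)(\rho+\hat\rho)$. Then a one-line expansion
\begin{equation*}
\cP(m+\hat m,\rho+\hat\rho)=\tfrac12(u+\hat u)^2(\rho+\hat\rho)=\tfrac12u^2\rho+\tfrac12u^2\hat\rho+u\hat u(\rho+\hat\rho)+\tfrac12\hat u^2(\rho+\hat\rho)
\end{equation*}
combined with $\hat u(\rho+\hat\rho)=\hat m-u\hat\rho$ and $\tfrac12\hat u^2(\rho+\hat\rho)=\cP(\hat m-u\hat\rho,\rho+\hat\rho)$ yields the announced formula after collecting terms.

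The remaining boundary subcases are $\rho+\hat\rho=0$ and $\rho+\hat\rho<0$, which I expect to be the only mildly delicate point of the argument. For $\rho+\hat\rho<0$, both $\cP(m+\hat m,\rho+\hat\rho)$ and $\cP(\hat m-\hat\rho u,\rho+\hat\rho)$ equal $+\infty$ by definition, so the identity holds in $[0,\infty]$. For $\rho+\hat\rho=0$ (so $\hat\rho=-\rho<0$), the left-hand perspective is finite iff $m+\hat m=0$, i.e.\ $\hat m=-u\rho$, which is equivalent to $\hat m-\hat\rho u=\hat m+\rho u=0$, the condition for finiteness of the right-hand perspective; in that common case both perspectives vanish and the explicit correction term $u\hat m-\tfrac12u^2\hat\rho$ equals $-u^2\rho+\tfrac12u^2\rho=-\tfrac12u^2\rho=-\cP(m,\rho)$, giving equality. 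This exhausts all cases.
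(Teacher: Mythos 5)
Your proof is correct and is exactly the ``direct substitution'' argument the paper invokes but omits. The change of variable $\hat u := (\hat m - u\hat\rho)/(\rho+\hat\rho)$, giving $m+\hat m = (u+\hat u)(\rho+\hat\rho)$, is the right way to organize the main case $\rho,\rho+\hat\rho>0$, and your treatment of the boundary subcases $\rho+\hat\rho\le 0$ and $m=\rho=0$ (which the paper leaves entirely implicit) is careful and complete, so the identity is indeed verified in $[0,\infty]$.
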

We omit the proof, which follows from direct substitution.
The perspective function is convex and lower-semi-continuous, as already observed below \eqref{eqdef:perspective}, and the vector $(u,-\frac{1}{2} u^2)$ is its gradient if $\rho>0$, and a subgradient element if $\rho=m=0$. 
%and $C^\infty$ on $\bR \times ]0,\infty[$.

%\begin{proof} % Include or not ?
%By the finiteness assumption, one has $\rho \succeq 0$ and $m \in \cR(\rho)$.
%	We can assume that $\rho + \hat \rho \succeq 0$, otherwise both sides of \eqref{eq:PExact} are infinite. Since $(m + \hat m) - (\hat m - \hat \rho u) = m+\hat \rho u = (\rho+\hat \rho) u \in \cR(\rho + \hat \rho)$, we can assume $m + \hat m \in \cR(\rho + \hat \rho)$ otherwise both sides of \eqref{eq:PExact} are infinite. We then obtain \eqref{eq:PExact} by expanding the following expression 
%	\begin{equation*}
%		\cP(m+\hat m, \rho+\hat \rho) - \cP(\hat m -\hat \rho u, \rho + \hat \rho) = \tfrac 1 2 [(\rho+\hat \rho) u]^\top (\rho + \hat \rho)^\dagger [(\rho+\hat \rho) u + 2 (\hat m-\hat \rho u)].
%	\end{equation*}
%	Since $\cP$ takes non-negative values, the linear part of \eqref{eq:PExact} is a subgradient element, as announced, showing the convexity of $\cP$ on its domain which is the convex set $\{(m,\rho)\mid \exists C \geq 0, m \otimes m \preceq C \rho\}$. Finally, since $\cP$ is $C^\infty$ on $\bR^d \times \cS_d^{++}$, its subgradient set is a singleton whose unique element is the gradient, when $\rho \succ 0$. 	
%\end{proof}

\begin{notation}
\label{not:porous}
We fix in the following the half timestep $\tau>0$ and half gridscale $h>0$. For readability, and in order to avoid multiple sub- or super-scripts, we denote by $\cE := \cE_{\tau h}^\Porous$ the energy of interest \eqref{eqdef:porousEnergy} and by $\Phi := \Phi_{\tau h}^\Porous$ its domain, see \cref{lem:domPorous}. Likewise, we denote by $\cT := \cT_\tau$ and $\cT':=\cT'_\tau$ the centered and staggered grids in time, and by $\cX := \bT_h^d$ the centered grid in space. Finally, we denote by $\|\cdot\|_1 := \|\cdot\|_{\ell^1(\cX)}$ the normalized $\ell^1$ norm, see \eqref{eqdef:perspective}. % on the finite set $\cX$
\end{notation}

Using \cref{prop:perspective} we obtain
%Using the exact Taylor-like expansion \eqref{eq:PExact}, we rewrite the energy of interest as
\begin{equation}
\label{eqdef:LQ}
	\cE(\phi+\hat \phi) = \cE(\phi) + \cL(\hat \phi; \phi) + \cQ(\hat \phi; \phi),
\end{equation}
for any $\phi,\hat \phi \in \Phi$ such that $\cE(\phi) < \infty$. 
The linear and second order (non-quadratic) terms are
\begin{align}
\nonumber
	\cL(\hat \phi; \phi)
	&:= 
	\tau (2h)^d \! \sum_{x \in \cX} \sum_{t \in \cT'}
	 \sum_{\sigma=\pm} 
	\Big[u^\sigma(t,x) \hat m(t,x) - \tfrac 1 2 u^\sigma(t,x)^2 \hat \rho(t+\sigma \tau,x) \Big]  
	+ (2h)^d \! \sum_{x\in \cX} u_0(x) \hat \phi(0,x),\\
\nonumber
	\cQ(\hat \phi; \phi) &:= \tau (2h)^d
	\sum_{x \in \cX} \sum_{t \in \cT'} \sum_{\sigma=\pm} 
	\cP\big(\hat m(t,x)-\hat \rho(t+\sigma \tau,x) u^\sigma(t,x),\, \rho(t+\sigma \tau,x)+\hat \rho(t+\sigma \tau,x)\big),\\
\label{eqdef:mhm}
	\text{where} & \
	m := \partial_\tau \phi,\ 
	\rho := 1+L_h \phi,\ 
	\hat m := \partial_\tau \hat \phi,\ 
	\hat \rho := L_h \hat \phi, 
	\text{ and } u^\sigma(t,x) := \frac{m(t,x)}{\rho(t+\sigma \tau,x)}.
\end{align}
Note that $\phi,\hat \phi,\rho,\hat \rho$ are defined on $\cT \times \cX$, whereas $m,\hat m, u^\sigma$ are defined on $\cT' \times \cX$ for any sign $\sigma=\pm$. 
The ratio defining $u^\sigma$ should be understood in the sense of 
\cref{prop:perspective}
in general; in the proof however, under
\cref{assum:porous} 
and thanks to 
\cref{prop:consistency}
below, the denominator $\rho$ is positive.
%DIF > The denominator $\rho$ of $u^\sigma$ is positive for sufficiently small $h>0$, by \cref{assum:porous,prop:consistency} below.
We used the telescopic sum $2\tau \sum_{t \in \cT'} m(t,x) = -\phi(0,x)$ to obtain $\cL$. We next deduce from this expression the strict convexity of the energy.

\begin{proposition}
\label{prop:porousStrictCvx}
The energy $\cE$ is strictly convex on the domain $\Phi$. 	
\end{proposition}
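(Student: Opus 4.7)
My plan is to use the exact expansion \eqref{eqdef:LQ} around the midpoint $\bar\phi := (\phi_0+\phi_1)/2$ with opposite perturbations $\pm\hat\phi$, $\hat\phi := (\phi_1-\phi_0)/2$. Cancelling the linear contribution $\cL$ will express the midpoint convexity defect as a sum of non-negative perspective values, and I will then rule out its vanishing via a backward-in-time induction based on the terminal condition $\hat\phi(T,\cdot)=0$. Strict midpoint convexity then yields strict convexity, $\cE$ being already convex.

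It suffices to consider $\phi_0\neq\phi_1$ in the effective domain ($\cE(\phi_0),\cE(\phi_1)<\infty$), since otherwise strict inequality is automatic. Then $\cE(\bar\phi)\leq(\cE(\phi_0)+\cE(\phi_1))/2<\infty$ as well, and \eqref{eqdef:LQ} applied at $\bar\phi$ with increments $\pm\hat\phi$ yields, after cancellation of $\cL(\hat\phi;\bar\phi)$ with $\cL(-\hat\phi;\bar\phi)$,
\[
\cE(\phi_0)+\cE(\phi_1)-2\cE(\bar\phi) = \cQ(\hat\phi;\bar\phi)+\cQ(-\hat\phi;\bar\phi) \geq 0.
\]
A direct computation with the midpoint gives $\bar\rho\pm\hat\rho = \rho_1,\rho_0 \geq 0$ for the second arguments of $\cP$ appearing in each $\cQ$. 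Since $\cP(a,b)=0$ with $b\geq 0$ forces $a=0$, and $\cP$ is even in $a$, equality in the display above is equivalent to the pointwise identity
\[
\hat m(t,x) = \hat\rho(t+\sigma\tau,x)\,u^\sigma(t,x), \qquad \forall\, t\in\cT',\ x\in\cX,\ \sigma\in\{\pm\},
\]
where $u^\sigma$ is the field associated to $\bar\phi$ by \eqref{eqdef:mhm}, finite everywhere because $\cE(\bar\phi)<\infty$ (by the convention of \cref{prop:perspective}).

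The heart of the argument is then a backward induction on $s\in\cT$ showing $\hat\phi(s,\cdot)\equiv 0$. The base case $s=T$ follows from $\hat\phi(T,\cdot)=\phi_1(T,\cdot)-\phi_0(T,\cdot)=0$, which also yields $\hat\rho(T,\cdot)=L_h\hat\phi(T,\cdot)=0$. For the induction step, assuming $\hat\phi(s,\cdot)\equiv 0$ and consequently $\hat\rho(s,\cdot)\equiv 0$, I apply the above identity at $t=s-\tau$, $\sigma=+$ to obtain $\hat m(s-\tau,x) = \hat\rho(s,x)\,u^+(s-\tau,x) = 0$. The definition of $\partial_\tau$ then gives $\hat\phi(s-2\tau,\cdot) = \hat\phi(s,\cdot) - 2\tau\,\hat m(s-\tau,\cdot) \equiv 0$, closing the induction. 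Iterating down to $s=0$ forces $\hat\phi\equiv 0$, contradicting $\phi_0\neq\phi_1$.

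The main obstacle I anticipate lies precisely in the induction step: it requires $0\cdot u^+(s-\tau,x)=0$, which is justified by the finiteness of $u^+$ under $\cE(\bar\phi)<\infty$, without requiring any positivity of $\bar\rho$ or $\bar m$. Only the direction $\sigma=+$ is needed, which is essential so that the induction propagates cleanly backwards from the terminal time rather than requiring bilateral time information.
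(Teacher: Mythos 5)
Your proof is correct and follows essentially the same strategy as the paper's: reduce failure of strict convexity to degeneracy of the perspective terms in $\cQ$ along a line segment, then run a backward-in-time induction from $\hat\phi(T,\cdot)=0$ using only the $\sigma=+$ contribution to conclude $\hat\phi\equiv 0$. The only cosmetic difference is that you test at the midpoint (cancelling $\cL$ by linearity and forcing $\cQ(\hat\phi;\bar\phi)+\cQ(-\hat\phi;\bar\phi)=0$), whereas the paper uses the ``affine along a segment'' characterization of non-strict convexity and observes that a term of the form $s^2\cP(\cdot)$ can only be affine in $s\in[0,1]$ if the perspective value vanishes.
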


\begin{proof}
The convexity of $\cE$ follows from the convexity of the perspective function \eqref{eqdef:perspective}, but proving strict convexity requires additional arguments.

 Consider $\phi,\hat \phi\in \Phi$ such that $\cE(\phi+s\hat \phi) = \cE(\phi) + s \cL(\hat \phi; \phi) + \cQ(s \hat \phi; \phi)$ is finitely valued and affine w.r.t.\ $s \in [0,1]$, i.e.\ $\cQ(s \hat \phi; \phi)$ is affine. We prove by induction on $n = 0, \cdots, N_\tau := T/(2\tau)$ that $\hat \phi(T-2\tau\, n,x) = 0$ for all $x \in \cX$. 
	This property holds by definition of the domain $\Phi$ for $n=0$. Assuming $\hat \phi(T-2\tau n,x)=0$, for all $x \in \cX$, and denoting $t := T-2\tau n-\tau$ we obtain $\hat m(t,x) = \hat \phi(t-\tau,x)/(2\tau)$ and $\hat \rho(t+\tau,x) = 0$ with the notation \eqref{eqdef:mhm}. Since $\cQ(s\hat \phi;\phi)$  is a sum of convex terms , the following term 
	%DIF > w.r.t.\ $s\hat \phi$, and $\cL(s\hat \phi; \phi)$ is linear w.r.t.\ $s\hat \phi$, the following term 
	\begin{equation*}
		\cP\big(s\hat m(t,x)-s\hat \rho(t+\tau,x)u^+(t,x), \rho(t+\tau,x)+s\hat \rho(t+\tau,x)\big) = 
		s^2\cP(\hat \phi(t-\tau,x)/(2\tau),\rho(t+\tau,x))
	\end{equation*}
	(corresponding to $\sigma=+$)
	must be an affine function of $s\in [0,1]$. This implies $\hat \phi(t-\tau,x)=0$, which proves the induction. Thus $\hat \phi=0$, and therefore $\cE$ is strictly convex as announced. 
\end{proof}

 \paragraph{Outline of the proof of
\cref{th:cvPorous}
.}
We study in 
\cref{subsec:LPorous}
the linear term in the expansion \eqref{eqdef:LQ}, and under
\cref{assum:porous} 
we establish that 
\begin{equation}
\label{eq:linL1}
	\cL(\hat \phi; \phi) \leq C_\cL (\tau^2+h^2) \Big(\|\hat \phi(0)\|_1 + \tau \sum_{t \in \mathring \cT} \|\hat \phi(t)\|_1\Big),
\end{equation}
where $\phi$ denotes the solution to the continuous BBB formulation, and $\hat \phi$ is an arbitrary perturbation. The constant $C_\cL = C_\cL(\lambda,\phi)$ only depends on the scheme coefficients $\lambda$ and on $\phi$. The estimate \eqref{eq:linL1} is obtained by rewriting, using discrete integration by parts, the quantity $\cL(\hat \phi; \phi)$ as a discrete version of the QPME applied to the variable conjugate to $\phi$, tested against $\hat \phi$, see \eqref{eq:porousIPP0}, \eqref{eq:porousIPP1} and \eqref{eq:pourousIPP0taylor}. 
The smoothness of $\phi$ is crucial in this estimate, and the second order quadratic rate is achieved thanks to the scheme consistency and evenness w.r.t.\ $\tau$ and $h$.

%DIF > We next establish in \cref{subsec:QPorous} that for all $t \in \cT$
We next consider in 
\cref{subsec:QPorous}
an implicit discretization of a backwards in time diffusion equation, with unknown $\hat \phi$, diffusivity $u^-$, and r.h.s.\ $r^-$:
%DIF > in which $\hat \phi$ is the unknown, $u^-$ is a diffusion coefficient, and the right hand side is denoted $r^-$ 
\begin{align}
\label{eq:implDiff}
 	\forall t \in \cT',\ \partial_\tau \hat \phi(t) - u^-(t) L_h \hat \phi(t-\tau) &= r^-(t),&
 	\hat \phi(T)&=0.
\end{align}
Using the degenerate ellipticity of the finite differences operator $L_h$, the non-negativity of $u^-$ and the boundedness of $L_h u^-$, and a Duhamel type formula for the solution $\hat \phi$, we obtain 

\begin{align}
\label{eq:phiR}
	\forall t \in \cT, \ \|\hat \phi(t)\|_1 &\leq C_\cQ R, &
	\text{where } R &= 2 \tau \sum_{t\in \cT'} \|r^-(t)\|_1,
\end{align}
for $0<\tau \leq \tau_*$ and $0<h\leq h_*$, where the positive constants $C_\cQ, \tau_*, h_*$ again only depend on $\lambda$ and $\phi$. %DIF > the coefficients $\lambda$ and smooth solution $\phi$.
We then estimate $\cL$ and $\cQ$ in terms of $R$, which is the averaged r.h.s.\ of \eqref{eq:implDiff}: denoting $C'_\cL := (1+T) C_\cL C_\cQ$ we have 
\begin{align}
\label{eq:EstimLR}
	\cL(\hat \phi; \phi) &\geq -C'_\cL (\tau^2+h^2) R, &
	R^2 &\leq 2T \cQ(\hat \phi; \phi).
\end{align}
The first estimate uses \eqref{eq:linL1} applied to $\cL(-\hat \phi; \phi)$, and \eqref{eq:phiR}. 
The second estimate is obtained by recognizing $r^-$ as the first argument of $\cP$ in the sum \eqref{eqdef:mhm} defining $\cQ(\hat \phi; \phi)$ (choosing the term $\sigma=-$), 
%DIF > namely $r^\sigma(t) := \hat m(t,x) - \hat \rho(t+\sigma \tau,x) u^\sigma(t,x)$ with $\sigma = -$, 
and using a simple argument based on the Cauchy-Schwartz inequality, see \eqref{eq:R2Q} below.

Finally, let us assume that the perturbation $\hat \phi$ of the continuous BBB solution $\phi$ is chosen in such way that $\phi+\hat \phi$ improves the discrete BBB energy $\cE$. Then 
\begin{equation*}
	0 
	\geq \cE(\phi+ \hat \phi) - \cE(\phi) 
	= \cL(\hat \phi; \phi) + \cQ(\hat \phi; \phi)
	\geq - C'_\cL (\tau^2+h^2) R + R^2/(2T).
\end{equation*}
It follows that $R \leq 2T C'_\cL (\tau^2+h^2)$, and therefore $\|\hat \phi(t)\|_1 \leq C_\cQ R \leq C_*(\tau^2+h^2)$ with $C_* := 2T C'_\cL C_\cQ$.
Let us conclude the proof of 
\cref{th:cvPorous}
: we have by the above
\begin{align}
\nonumber
	\inf\{ \cE(\tilde \phi) \mid \tilde \phi \in \Phi\} 
	&= \inf\{\cE(\phi+\hat \phi) \mid \hat \phi \in \Phi,\, \cE(\phi+\hat \phi) \leq \cE(\phi)\} \\
	&= \inf\{\cE(\phi+\hat \phi)\mid \hat \phi \in \Phi, \forall t \in \cT, \|\hat \phi(t)\|_1 \leq C_*(\tau^2+h^2)\}.
	\label{eq:minPclose}
\end{align}
The latter infimum is attained, since $\cE$ is lower-semi-continuous (as a sum of perspective functions) and the optimization domain is compact. The corresponding minimizer is global,  is unique since $\cE$ is strictly convex by
\cref{prop:porousStrictCvx}
, and it obeys the  estimate announced in
\cref{th:cvPorous}.

 \subsection{Upper estimate of the linear term}
\label{subsec:LPorous}
This section is devoted to the proof of \eqref{eq:linL1}, which estimates the linear term $\cL(\hat \phi; \phi)$ in terms of the $\ell^1(\cX)$ norm of the perturbation $\hat \phi$, when $\phi$ is a smooth solution to the continuous problem following \cref{assum:porous}.
For that purpose, we first rewrite $\cL(\hat \phi; \phi)$ as finite differences of $\phi$ tested against $\hat \phi$, using discrete summations by parts in time and space. 
\begin{lemma}
\label{lem:timeIPP}
For any $f : \cT \to \bR$ and $g : \cT' \to \bR$, one has denoting $\mathring\cT := \cT \sm \{0,T\}$
\begin{align}
\label{eq:IPP}
	2\tau \sum_{t\in \cT'} g(t)\, \partial_\tau f(t)  
	&= f(T)g(T-\tau) - f(0) g(\tau)
	- 2\tau\sum_{t \in \mathring \cT} f(t)\, \partial_\tau g(t).
\end{align}
\end{lemma}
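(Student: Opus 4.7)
The plan is to prove this by direct expansion and reindexing of the time sum, isolating the boundary contributions at $t=0$ and $t=T$ from the interior terms that will form the summation-by-parts conjugate.

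First I would substitute the definition of the finite difference to rewrite the LHS as
\[
2\tau \sum_{t \in \cT'} g(t)\, \partial_\tau f(t) = \sum_{t \in \cT'} g(t)\big(f(t+\tau) - f(t-\tau)\big),
\]
so the $2\tau$ factor cancels. The two inner sums $\sum_{t \in \cT'} g(t) f(t+\tau)$ and $\sum_{t \in \cT'} g(t) f(t-\tau)$ are then reindexed by setting $s = t+\tau$ and $s = t-\tau$ respectively; since $\cT' = \{\tau, 3\tau,\ldots,T-\tau\}$, the first reindexed sum ranges over $s \in \cT \sm \{0\}$ with coefficient $g(s-\tau) f(s)$, while the second ranges over $s \in \cT \sm \{T\}$ with coefficient $g(s+\tau) f(s)$.

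Next I would split each reindexed sum according to whether $s$ is an interior point of $\cT$ or a boundary point. The value $s = T$ occurs only in the first sum, contributing $f(T) g(T-\tau)$, and $s = 0$ occurs only in the second sum, contributing $f(0) g(\tau)$ with a minus sign. For $s \in \mathring \cT$ both $s-\tau$ and $s+\tau$ lie in $\cT'$, so their contributions combine into
\[
\sum_{s \in \mathring\cT} f(s)\big(g(s-\tau) - g(s+\tau)\big) = -2\tau \sum_{s \in \mathring \cT} f(s)\, \partial_\tau g(s),
\]
which is precisely the conjugate discrete derivative appearing on the RHS. Collecting the boundary and interior terms yields the stated identity.

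The main thing to verify carefully is the index bookkeeping: that the reindexed sums miss exactly one endpoint of $\cT$ each, that $\partial_\tau g(s)$ is well defined on $\mathring \cT$ (which holds because $\mathring\cT = \{2\tau, 4\tau,\ldots,T-2\tau\}$ and $\mathring\cT \pm \tau \subset \cT'$), and that the signs of the boundary terms align with the claimed formula. Apart from this, there is no analytical difficulty, and the proof reduces to a two-line computation once the reindexing is laid out.
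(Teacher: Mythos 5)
Your proof is correct, and the index bookkeeping is exactly right: the shift $s = t+\tau$ gives $s \in \cT\sm\{0\}$, the shift $s = t-\tau$ gives $s \in \cT\sm\{T\}$, and the interior contributions recombine into $-2\tau\,\partial_\tau g(s)$. The paper explicitly omits the proof as a ``basic and classical identity,'' and the direct expand-and-reindex argument you give is precisely the standard derivation it leaves to the reader.
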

%\begin{proof}
%This follows from the change of variables $\sum_{t \in \cT'} f(t+\tau) g(t) = f(T) g(T-\tau) + \sum_{t \in \mathring \cT} f(s) g(s-\tau)$, and likewise $\sum_{t \in \cT'} f(t-\tau) g(t) = f(0) g(\tau) + \sum_{t \in \mathring \cT} f(s) g(s+\tau)$, together with the expressions $\dt\,\cD_\tau f(t) = f(t+\tau)-f(t-\tau)$ and $\cI_\tau^\theta f(t) = (1-\theta) f(t-\tau) + \theta f(t+\tau)$. 
%\end{proof}

The proof of this basic and classical identity is omitted, and we turn to the spatial counterpart \eqref{eq:LhAdj}, which incidentally shows that $L_h$ is positive semi-definite. This property is used in \cite[Appendix C]{bonnans2023monotone} to establish convergence rates for the discretization of an elliptic equation, but it is not used here since we instead rely on degenerate ellipticity, see \cref{prop:implLap}.
\begin{lemma}
\label{lem:LhAdj}
The operator $\partial_h^e$ is skew-adjoint, for all $e \in E$, in the sense that 
\begin{equation}
\label{eq:DheAdj}
\sum_{x \in \cX} u(x) \partial_h^e w(x) = \frac{1}{2 \tau} \sum_{x \in \cX} u(x) w(x+h e)- \frac{1}{2\tau} \sum_{x \in \cX} u(x) w(x-he) = -\sum_{y \in \cX+h e} w(y) \partial_h^e u(y)	
\end{equation}
for all $u \in \bR^\cX$, $w\in \bR^{\cX+h e}$. 
The operator $L_h$ is self-adjoint: for all $u,v \in \bR^\cX$
	\begin{align}
	\label{eq:LhAdj}
		\sum_{x \in \cX} u(x) L_h v(x) &= \sum_{e \in E} \sum_{y \in \cX+h e} \lambda^e(y)\, \partial_h^eu(y)\, \partial_h^ev(y) = \sum_{x \in \cX} v(x) L_h u(x). 
	\end{align}
\end{lemma}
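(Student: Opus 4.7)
Both identities are purely algebraic and follow from the definition \eqref{eqdef:DtDhe} of $\partial_h^e$ together with translation invariance of the sum over the periodic grid $\cX = \bT_h^d$. The plan is to first prove the skew-adjointness of $\partial_h^e$, and then obtain the self-adjointness of $L_h$ by applying it twice.

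For the first assertion, I would start from $\partial_h^e w(x) = \bigl(w(x+he) - w(x-he)\bigr)/(2h)$ and split the sum $\sum_x u(x)\, \partial_h^e w(x)$ into its two halves; linearity of the sum directly yields the middle expression of \eqref{eq:DheAdj}. For the second equality, I would perform the change of summation variable $y = x+he$ in the first half and $y = x-he$ in the second. Since $\cX$ is a periodic grid and $he \in (h\bZ)^d$, these translations are bijections onto the shifted grid $\cX + he$, whose staggering pattern is governed by the parities of the components of $e$. After the substitution, the two halves become $\tfrac{1}{2h}\sum_{y \in \cX+he} u(y-he)\, w(y)$ and $\tfrac{1}{2h}\sum_{y \in \cX+he} u(y+he)\, w(y)$, whose difference is precisely $-\sum_{y \in \cX+he} w(y)\, \partial_h^e u(y)$.

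For the self-adjointness of $L_h$, I would expand using \eqref{eqdef:Lh} to write $L_h v = -\sum_{e \in E} \partial_h^e(\lambda^e \partial_h^e v)$, and then apply the skew-adjoint identity just proved with $w := \lambda^e \partial_h^e v$ (which indeed lives on $\cX + he$, consistently with \eqref{eq:DheAdj}), obtaining
\[
	\sum_{x \in \cX} u(x)\, L_h v(x) = \sum_{e \in E}\sum_{y \in \cX+he} \lambda^e(y)\, \partial_h^e u(y)\, \partial_h^e v(y).
\]
This middle expression of \eqref{eq:LhAdj} is manifestly symmetric in $u$ and $v$, so starting instead from $\sum_x v(x)\, L_h u(x)$ and carrying out the same manipulation yields the third expression and the desired self-adjointness. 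As a byproduct, non-negativity of the coefficients $\lambda^e$ exhibits $L_h$ as positive semi-definite.

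The only subtle point, and the one where most care is needed, is to keep track of the shifted summation domain $\cX + he$ versus $\cX$, and to verify that $x \mapsto x \pm he$ is a bijection from the periodic grid to its shifted copy. No analytic machinery is needed; the argument is entirely combinatorial and matches the classical proof that a centered finite difference is skew-adjoint and that the associated divergence-form operator is self-adjoint.
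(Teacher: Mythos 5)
Your proof is correct and matches the paper's own argument essentially word for word: the same change of variables $y = x \pm he$, the same observation that $\cX - he = \cX + he$ (since $e \in \bZ^d$ and the grid has step $2h$), and the same application of the skew-adjointness identity with $w = \lambda^e \partial_h^e v$ to deduce the symmetric middle expression of \eqref{eq:LhAdj}. Incidentally, you also correctly use the factor $1/(2h)$ throughout, whereas the stated equation \eqref{eq:DheAdj} in the paper has a typo ($1/(2\tau)$ should read $1/(2h)$).
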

\begin{proof}
Equation \eqref{eq:DheAdj} is obtained using the change of variables $y=x+h e$ (resp.\ $y=x-he$) in the intermediate sums terms, which maps $\cX\to \cX+h e$ (resp.\ likewise, since $\cX -h e=\cX+he$ for any $e\in E \subset\bZ^d$). Equation \eqref{eq:LhAdj} follows from the defining formula  $- L_h u := \sum_{e \in E} \partial_h^e(\lambda^e\partial_h^e u)$.
\end{proof}
As announced, we rewrite $\cL(\hat \phi; \phi)$ using successively \cref{lem:timeIPP,lem:LhAdj}, and recalling \eqref{eqdef:mhm}:
\begin{align}
\nonumber
	- \cL(\hat \phi;\phi) &= 
	(2h)^d \sum_{x \in \cX}
	\Big( \frac{u^-(\tau,x) + u^+(\tau,x)}{2} \hat \phi(0,x) + \frac{\tau}{2} u^-(\tau,x)^2 \hat \rho(0,x) - 
	u_0(x) \hat \phi(0,x) \Big)\\
\nonumber
	&+ \tau (2h)^d \sum_{x \in \cX} \sum_{t \in \mathring \cT} \sum_{\sigma=\pm} \Big(
	\partial_\tau u^\sigma(t,x) \hat \phi(t,x) + \frac{1}{2} u^\sigma(t-\sigma \tau,x)^2 \hat \rho(t,x)\Big)\\
\label{eq:porousIPP0}
	&=(2h)^d \sum_{x \in \cX}
	\Big(\frac{u^-(\tau,x) + u^+(\tau,x)}{2} + \frac{\tau}{2} L_h \big[(u^-)^2\big](\tau,x) - u_0(x)\Big) \hat \phi(0,x)\\
\label{eq:porousIPP1}
	&+ \tau (2h)^d \sum_{x \in \cX} \sum_{t \in \mathring \cT} \sum_{\sigma=\pm}
	\Big(\partial_\tau u^\sigma(t,x) + \frac{1}{2} L_h\big[(u^\sigma)^2\big](t-\sigma \tau,x) \Big) \hat \phi(t,x)
\end{align}
 In the rest of this subsection, we work under \cref{assum:porous}, and thus assume that $\phi\in C^\infty([0,T] \times \bT^d,\bR)$ is a smooth function which solves the continuous problem.
We recognize in \eqref{eq:porousIPP0} and \eqref{eq:porousIPP1} a discrete version of the primal PDE we started from, namely the QPME (\ref{eq:QPME_Burgers}, left). The next step, achieved in \mbox{%DIFAUXCMD
\cref{prop:consistency}}\hskip0pt%DIFAUXCMD
, is to prove that our discretization choice implies that $u^\sigma$, $\sigma=\pm$, solves this discrete PDE up to a residue quadratically small in $\tau$ and $h$.
For that purpose we distinguish the variables 
\begin{align*}
	m &= \partial_\tau \phi, &\rho &:= 1+L_h \phi,& u^\sigma(t,x) &:= \frac{m(t,x)}{\rho(t+\sigma \tau,x)}, &
	\bm &:= \partial_t \phi, &\brho &:= 1-\diver(\cD\nabla \phi),& \bu &:= \frac{\bm}{\brho},
\end{align*}
and note that $m$ and $u^\sigma$ are defined on the time-restricted domain $[\tau,T-\tau] \times \bT^d$, whereas the other quantities are defined over $[0,T] \times \bT^d$. By \cref{assum:porous}, one has $\brho > 0$ uniformly, and $\bu$ obeys the QPME: $\partial_t \bu = \frac{1}{2} \diver(\cD\nabla \bu^2)$ and $\bu(0,\cdot) = u_0$.

The following two basic lemmas, on the integral representation of the finite difference operator, and on the Taylor expansion of a smooth function which is symmetric w.r.t.\ some variables, should be clear by themselves and are thus stated without proof.

\begin{lemma}
\label{lem:diffAsInt}
	Let $F \in C^k (\bT^d \times [-1,1]^n)$, where $k \geq 1$, and let $e \in \bR^d$. Define $G\in C^{k-1}(\bT^d \times [-1,1]^{n+1})$ 
	%, $f \in C^k(\bT^d)$ and $g\in C^{k-1}(\bT^d)$ 
	as follows: for all $x \in \bT^d$ and all $h_1,\cdots,h_n,h \in [-1,1]$ 
\begin{align*}
	G(x; h_1,\cdots,h_n,h) &:= \frac{1}{2} \int_{-1}^1 e^\top 
	%\frac\partial{\partial x} 
	\nabla
	F(x+s h e; h_1,\cdots,h_n) \diff s, %, &
%	\text{thus } G(x; h_1,\cdots h_n,h) = \partial_h^e F(x; h_1,\cdots, h_n)	
%	f(x) &:= F(x;0,\cdots),& g(x)&:=G(x; 0,\cdots).&
\end{align*}
where the gradient is w.r.t.\ the spatial variable $x$.
Then $G(x; h_1,\cdots,h_n,h) = \partial_h^e F(x; h_1,\cdots,h_n)$ for all $h\neq 0$.
%DIF > In addition $G(x; h_1,\cdots,h_n,0) = e^\top \nabla F(h_1,\cdots,h_n)$.
In addition $g(x) = e^\top \nabla f(x)$, where $f(x) := F(x;0,\cdots)$ and $g(x):=G(x; 0,\cdots)$.
%DIF < , and $g(x) = e^\top \nabla f(x)$.
\end{lemma}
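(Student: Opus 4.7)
The plan is to establish the identity for $h \neq 0$ by the fundamental theorem of calculus combined with a linear rescaling of the integration variable, and to obtain both the asserted regularity of $G$ and the boundary evaluation at $h=0$ via standard differentiation under the integral sign. Since the statement is purely a one-variable calculation, I would freeze the auxiliary variables $h_1, \ldots, h_n$ throughout and regard both sides as functions of the spatial variable $x \in \bT^d$ (and of the extra parameter $h$ where relevant).

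First I would apply the fundamental theorem of calculus along the straight segment joining $x-he$ to $x+he$, writing
\begin{equation*}
F(x+he;h_1,\ldots,h_n) - F(x-he;h_1,\ldots,h_n) = \int_0^1 \frac{\diff}{\diff \mu} F\bigl(x + (2\mu-1)he;h_1,\ldots,h_n\bigr)\,\diff \mu,
\end{equation*}
expanding the derivative inside by the chain rule, and then rescaling via the substitution $s = 2\mu - 1$ to obtain $F(x+he;\cdot) - F(x-he;\cdot) = h \int_{-1}^1 e^\top \nabla F(x+she;\cdot)\,\diff s$. Dividing by $2h$, which is permissible since $h \neq 0$, directly identifies $G(x;h_1,\ldots,h_n,h)$ with the finite difference $\partial_h^e F(x;h_1,\ldots,h_n)$ as defined in \eqref{eqdef:DtDhe}. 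No sign subtlety arises from $h<0$ because the rescaling is linear and the orientation of the integration interval is preserved.

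For the regularity claim $G \in C^{k-1}$, I would invoke differentiation under the integral sign: the integrand $(x,h_1,\ldots,h_n,h,s) \mapsto e^\top \nabla F(x+she;h_1,\ldots,h_n)$ is of class $C^{k-1}$ jointly in all its arguments, since $F$ is $C^k$, the map $(x,s,h) \mapsto x+she$ is smooth, and taking one gradient costs one derivative. Integration over the compact interval $[-1,1]$ then yields a function of class $C^{k-1}$ in the remaining variables. Finally, for the identity $g = e^\top \nabla f$, I would simply substitute $h_1 = \cdots = h_n = h = 0$ into the definition of $G$: the integrand becomes constant in $s$ and equals $e^\top \nabla F(x;0,\ldots,0) = e^\top \nabla f(x)$, whose average over $s \in [-1,1]$ is the claimed value. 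The only potential obstacle is bookkeeping—keeping track of which variables are frozen and which are being integrated—but there is no genuine conceptual difficulty, which is why the authors present this lemma without a proof.
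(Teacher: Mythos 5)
Your proof is correct: the identity for $h\neq 0$ follows from the fundamental theorem of calculus and the linear rescaling $s = 2\mu-1$, the $C^{k-1}$ regularity from differentiation under the integral sign on the compact interval $[-1,1]$, and the boundary case $g = e^\top\nabla f$ from direct substitution at $h_1 = \cdots = h_n = h = 0$. The paper states this lemma without proof, and your argument is precisely the standard one the authors had in mind.
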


\begin{lemma}
\label{lem:TaylorSym}
	Let $F\in C^2(\bT^d \times [-1,1]^2)$. 
	%, and let $f(x) := F(x; 0,0)$. 
	%Then $\max_{x \in \bT^d} |F(x; \tau,h) - f(x)| = \cO(|\tau|^a+|h|^b)$, where 
	Define $a=1$ (resp.\ $a=2$ if $F(x;\tau,h) = F(x; -\tau,h)$ for all $x \in \bT^d$ and $\tau,h \in [-1,1]$), and likewise $b=1$ (resp.\ $b=2$ if $F(x; \tau, h) = F(x; \tau,-h)$ for all $x,\tau,h$).
	Then $\max_{x \in \bT^d} |F(x; \tau,h) - f(x)| = \cO(|\tau|^a+|h|^b)$, where $f(x) := F(x; 0,0)$. 
\end{lemma}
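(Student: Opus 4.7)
The plan is to combine a second order Taylor expansion of $F$ in the variables $(\tau,h)$ around the origin with the elementary observation that a $C^1$ function which is even in one of its arguments has vanishing first partial derivative in that argument at the symmetry point.

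First I would note that, since $F$ is $C^2$ on the compact domain $\bT^d \times [-1,1]^2$, all of its partial derivatives up to order two are uniformly bounded by some constant $M$. Then I would apply Taylor's theorem in the two variables $(\tau,h)$ at the base point $(0,0)$ to obtain, uniformly in $x \in \bT^d$,
\begin{equation*}
	F(x;\tau,h) = f(x) + \tau\, \partial_\tau F(x;0,0) + h\, \partial_h F(x;0,0) + R(x;\tau,h),
\end{equation*}
together with a uniform bound $|R(x;\tau,h)| \leq C(\tau^2 + |\tau h| + h^2) \leq C'(\tau^2 + h^2)$ for the remainder.

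Next, I would exploit the parity assumptions. In the case $a=2$, differentiating the identity $F(x;\tau,h) = F(x;-\tau,h)$ at $\tau=0$ yields $\partial_\tau F(x;0,0) = 0$, so the linear-in-$\tau$ contribution disappears; in the case $a=1$, this contribution is trivially bounded by $M |\tau|$. The treatment of the $h$ direction is symmetric, using the corresponding parity assumption to decide between case $b=1$ and case $b=2$.

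To conclude, I would combine the four resulting subcases and use the elementary inequalities $\tau^2 \leq |\tau|$ and $h^2 \leq |h|$, valid for $|\tau|,|h| \leq 1$, to absorb the quadratic remainder into the announced bound $\cO(|\tau|^a + |h|^b)$. There is no significant obstacle; the only care needed is handling the mixed term $\tau h$ in the remainder, which is controlled via $|\tau h| \leq \tfrac{1}{2}(\tau^2 + h^2)$ and hence absorbed by $|\tau|^a + |h|^b$ in each of the four cases.
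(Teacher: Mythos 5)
Your proof is correct. The paper in fact states \cref{lem:TaylorSym} without proof (``should be clear by themselves''), and the argument you give — a second-order Taylor expansion at $(\tau,h)=(0,0)$ with a uniform remainder bound from compactness, the observation that evenness in a variable kills the corresponding first-order partial at the symmetry point, and the absorption of the $\cO(\tau^2+h^2)$ remainder into $|\tau|^a+|h|^b$ via $|\tau|,|h|\le 1$ — is precisely the natural argument the authors had in mind.
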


\begin{proposition} 
\label{prop:consistency}
Under \cref{assum:porous} and with the notations $m,\rho,u^\sigma,\bm,\brho,\bu$ above one has 
\begin{align}
\label{eq:TaylorPorous}
	m &= \bm + \cO(\tau^2),&
	\rho &= \brho + \cO(h^2), &
	u^\sigma &= \bu + \cO(\tau + h^2), &
	u^+ + u^- &= 2\bu + \cO(\tau^2 + h^2),
\end{align} \\[-7ex]
\begin{align}
\label{eq:TaylorPorousH}
	-L_h u^- &= \diver(\cD\nabla \bu) + \cO(\tau+h^2), &
	-L_h [(u^-)^2] &= \diver(\cD\nabla \bu^2) + \cO(\tau+h^2),
\end{align}\\[-7.5ex]
\begin{align*}
%\label{eq:TaylorPorousH2}
	\partial_\tau \sum_{\sigma=\pm} u^\sigma &= 2\partial_t \bu + \cO(\tau^2 + h^2),
	&
	\sum_{\sigma=\pm} L_h [(u^\sigma)^2](t-\sigma\tau,x) &= -2\diver(\cD\nabla \bu^2)(t,x) + \cO(\tau^2+h^2), 
\end{align*}
where the $\cO$ notation is understood in the $L^\infty$ norm over the common domain of definition. 
\end{proposition}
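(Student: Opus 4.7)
The plan is to first establish the joint smoothness of $m$, $\rho$, and $u^\sigma$ in all their variables, using the integral representation of \cref{lem:diffAsInt}. Applied to the time variable, that lemma gives $m(t,x;\tau) = \tfrac12 \int_{-1}^1 \partial_t \phi(t+s\tau,x)\,\diff s$, which extends smoothly to $\tau = 0$ with value $\bm$. Applied twice in space, it shows that $\rho(t,x;h) = 1 + L_h \phi(t,x)$ extends smoothly to $h=0$ with value $\brho$. Since $\brho$ is positively bounded below by \cref{assum:porous}, the ratio $u^\sigma(t,x;\tau,h) = m(t,x;\tau)/\rho(t+\sigma\tau,x;h)$ is jointly smooth in $(t,x,\tau,h)$ for $\tau,h$ small enough, with all derivatives uniformly bounded on the compact parameter domain.

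With this framework, the first line \eqref{eq:TaylorPorous} consists of routine centered Taylor expansions: $m = \bm + \cO(\tau^2)$ from the central difference in time, and $\rho = \brho + \cO(h^2)$ from the second-order consistency of $L_h$ with $L$, using the decomposition $\cD = \sum_e \lambda^e ee^\top$. Dividing and using the lower bound on $\brho$ yields $u^\sigma = \bu + \cO(\tau + h^2)$. The sharpened estimate $u^+ + u^- = 2\bu + \cO(\tau^2+h^2)$ follows from the observation that this sum is even in $\tau$ (the swap $\tau \mapsto -\tau$ preserves $m$ and exchanges the shifts $t\pm\tau$) and even in $h$ (since $L_h$ itself is), via \cref{lem:TaylorSym} with $a=b=2$.

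For \eqref{eq:TaylorPorousH}, I would decompose the action of $L_h$ on $u^-$ into its continuous counterpart plus a finite-difference error. Because $u^-(\cdot;\tau,h)$ is $C^\infty$ in $x$ with derivatives uniformly bounded in $(\tau,h)$, the standard second-order consistency gives $L_h u^-(x;\tau,h) = L u^-(x;\tau,h) + \cO(h^2)$. Then, since $u^- - \bu = \cO(\tau+h^2)$ as an estimate of smooth functions with uniformly bounded derivatives, $L u^- = L\bu + \cO(\tau+h^2)$, and combining proves the first identity. The second identity, concerning $(u^-)^2$, follows by exactly the same argument applied to the smooth function $(u^-)^2$.

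The final two estimates combine both ideas. Since $u^+ + u^- - 2\bu = \cO(\tau^2 + h^2)$ as smooth functions of $(t,x)$ with bounded derivatives, applying the centered time difference preserves the bound, yielding $\partial_\tau(u^+ + u^-) = 2\partial_t \bu + \cO(\tau^2 + h^2)$. For the last identity, set $S(t,x;\tau,h) := L_h[(u^+)^2](t-\tau,x) + L_h[(u^-)^2](t+\tau,x)$. The key observation is that $\tau \mapsto -\tau$ simultaneously exchanges $u^+ \leftrightarrow u^-$ and the time shifts $t-\tau \leftrightarrow t+\tau$, so $S$ is even in $\tau$; evenness in $h$ again comes from $L_h$. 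Thus \cref{lem:TaylorSym} gives $S = S|_{\tau=h=0} + \cO(\tau^2+h^2) = 2 L \bu^2 + \cO(\tau^2+h^2) = -2\diver(\cD\nabla \bu^2) + \cO(\tau^2 + h^2)$. The main obstacle throughout is making uniform in $(\tau,h)$ the notion of ``smooth function with bounded derivatives,'' so that $L_h$ and $\partial_\tau$ can be commuted with Taylor expansion up to quadratic error; this is precisely the role played by the integral representations of \cref{lem:diffAsInt}.
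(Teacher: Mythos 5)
Your proposal follows essentially the same strategy as the paper's proof: use the integral representation of \cref{lem:diffAsInt} to exhibit each discrete quantity as a jointly smooth function of $(t,x;\tau,h)$ extending across $\tau=0$ and $h=0$, and then exploit the symmetries under $\tau\mapsto-\tau$ and $h\mapsto-h$ via \cref{lem:TaylorSym} to sharpen the error from first to second order. The symmetry observations you make — that $\partial_\tau$ and $L_h$ are even, that the time-shift in $u^\sigma$ breaks evenness in $\tau$, that $u^++u^-$ restores it, and that the $\tau\mapsto-\tau$ swap exchanges $u^+\leftrightarrow u^-$ along with the time shifts in the final sum — are exactly the ones the paper uses.

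One step is stated loosely enough that it reads as an invalid inference. You write that because $u^- - \bu = \cO(\tau+h^2)$ and the functions have ``uniformly bounded derivatives,'' one may conclude $L u^- - L\bu = \cO(\tau+h^2)$. Taken at face value, smallness of a function together with a uniform bound on its derivatives does not imply smallness of the derivatives (e.g.\ $\ve\sin(x/\ve)$). What actually makes the conclusion valid — and what the paper's framework supplies — is that $L u^-(\cdot\,;\tau,h)$, viewed through the integral representation of \cref{lem:diffAsInt}, is itself a jointly $C^2$ function of $(t,x;\tau,h)$ that reduces to $L\bu$ at $(\tau,h)=(0,0)$ and is even in $h$, so that \cref{lem:TaylorSym} applies directly to it (and likewise to $\partial_t(u^++u^--2\bu)$ for the next estimate). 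You have set up this framework in the first paragraph and clearly understand the mechanism, so this is a lapse in phrasing rather than a gap in the argument; but the clean route is to run the symmetry-plus-Taylor lemma on $L u^-$ itself, rather than trying to propagate an $L^\infty$ bound through $L$.
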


\begin{proof}
It is sufficient in this proof to assume that $\phi \in C^6$, and that the coefficients $\lambda^e \in C^5$ for all $e \in E$, rather than $C^\infty$, which allows a slight weakening of  \mbox{%DIFAUXCMD
\cref{assum:porous}}\hskip0pt%DIFAUXCMD. 
We consider an arbitrary extension of $\phi$ into a periodic function of the time variable $t \in \bR/(T+1)\bZ$, in addition to the space variable $x \in \bT^d$, with the same smoothness, so as to fit the assumptions of \cref{lem:diffAsInt,lem:TaylorSym}. We remove here \cref{assum:step} on the timestep and gridscale, and let $\tau$ and $h$ be possibly zero, positive or negative.
The announced estimates can be reformulated as $F(t,x;\tau,h) = f(t,x)+ \cO(|\tau|^a+|h|^b)$, for some exponents $a,b\in \{1,2\}$.
Expressing  successive finite differences as integral operators as in \cref{lem:diffAsInt}, we find that $F(t,x;\tau,h) = \gF(t,x; \tau,\cdots,\tau,h,\cdots,h)$ where $\gF(t,x; \tau_1,\cdots,\tau_m,h_1,\cdots,h_n)$ is at least $C^2$, and that $f(t,x) = \gF(t,x; 0,\cdots,0)$ holds by design. \Cref{lem:TaylorSym} thus applies, and the announced estimates follow from the symmetry properties of $F$ w.r.t.\ the arguments $\tau$ and $h$.

The staggered time finite difference $\partial_\tau$ defined in \eqref{eqdef:DtDhe} is indeed symmetric w.r.t.\ $\tau$, thus $\partial_\tau \phi = \partial_t \phi + \cO(\tau^2)$ which is (\ref{eq:TaylorPorous}, i). The scheme $L_h$ expanded in \eqref{eq:LhExpl} is symmetric w.r.t.\ $h$, hence is second order accurate: recalling the decomposition (\ref{eqdef:Lh}, right) of $\cD$ we obtain
\begin{equation*}
	\diver(\cD \nabla \phi) 
	= \sum_{e \in E} \diver( \lambda^e e e^\top \nabla \phi) 
	= \sum_{e \in E} e^\top \nabla (\lambda^e e^\top \nabla\phi) 
	= -L_h \phi + \cO(h^2)
\end{equation*}
which is (\ref{eq:TaylorPorous}, ii), see also \cite[Proposition C.15]{bonnans2023monotone} for a similar discussion.
In particular, $\rho$ is like $\brho$ positively bounded below for sufficiently small $h$, and thus $u^\sigma$ is well defined. 
The time shift in the denominator of $u^\sigma$ breaks the symmetry w.r.t.\ $\tau$, but not w.r.t.\ $h$, hence the $\cO(\tau+h^2)$ error in (\ref{eq:TaylorPorous}, iii) and likewise \eqref{eq:TaylorPorousH}. The sum $u^+ + u^-$ restores the symmetry w.r.t.\ $\tau$, hence (\ref{eq:TaylorPorous}, iv) and the remaining estimates. 
\end{proof}

Inserting the expansions of \cref{prop:consistency} in \eqref{eq:porousIPP1} we obtain 
\begin{align}
%\nonumber
\label{eq:pourousIPP0taylor}
-\cL(\hat \phi; \phi)
	&=(2h)^d \sum_{x \in \cX}
	\Big(\bu(\tau,x) -\frac{\tau}{2} \diver(\cD\nabla \bu^2)(\tau,x)- u_0(x) +\cO(\tau^2+h^2)\Big) \hat \phi(0,x)\\
\nonumber
	&+ 2\tau (2h)^d \sum_{x \in \cX} \sum_{t \in \mathring \cT} 
	\Big(\partial_t \bu(t,x)-\frac{1}{2} \diver(\cD \nabla \bu^2)(t,x) + \cO(\tau^2+h^2)\Big) \hat \phi(t,x).
\end{align}
 Recalling that $\bu$ is a smooth solution to the QPME, we obtain \eqref{eq:linL1} as announced.

\subsection{Lower estimate of the second order term}
\label{subsec:QPorous}

The first part of this subsection establishes the announced estimate \eqref{eq:phiR} of the solution of a backwards in time diffusion equation \eqref{eq:implLin} discretized in an implicit manner.
We then establish the announced lower bound \eqref{eq:EstimLR} on the second order term $\cQ(\hat \phi; \phi)$ in \eqref{eqdef:LQ}. Together, these results fill the remaining gaps in the proof outline, which concludes the proof of \mbox{%DIFAUXCMD
\cref{th:cvPorous}}\hskip0pt%DIFAUXCMD.

%DIF > Together, these results conclude the proof of \cref{th:cvPorous}, as described in the proof outline \eqref{eq:minPclose}.

%DIF > We conclude in this subsection the proof of our convergence result \cref{th:cvPorous}, using the following two steps. We first estimate \eqref{eqdef:Rphi1} the solution of an implicit discretization scheme for a time-reversed PDE \eqref{eq:implLin}. We then deduce from this estimate a lower bound for the second order term $\cQ(\hat \phi; \phi)$ in \eqref{eqdef:LQ}, and obtain from this strict convexity property that the minimizer of the discretized energy and of the continuous energy functional are close to each other \eqref{eq:minPclose}.

The term $\cQ(\hat \phi; \phi)$ defined in \eqref{eqdef:LQ} is a sum of perspective functions, whose first argument reads
\begin{equation}
\label{eq:residuePorous}
	\!r^\sigma(t) := \hat m(t) - \hat \rho(t+\sigma \tau)u^\sigma(t) = 
	\frac{\hat \phi(t+\tau) - \hat\phi(t-\tau)}{2\tau} 
	- u^\sigma(t) L_h\hat\phi(t+\sigma \tau) ,
\end{equation}
for all times $t \in \cT'$ and signs $\sigma=\pm$. 
Here and below, given a bivariate function $u(t,x)$, we denote by $u(t) := u(t,\cdot)$ the univariate function where the time argument has been frozen to some given value $t$.
The term \eqref{eq:residuePorous} can be regarded as the r.h.s.\ of a timestep for the discretization of a time-reversed linear PDE closely related to \eqref{eq:pdePhi}, with the terminal boundary condition $\hat \phi(T)=0$. This timestep is explicit (resp.\ implicit) if $\sigma = +$ (resp.\ $\sigma=-$, see \eqref{eq:implDiff}). %DIF >  below in the latter case.

\paragraph{Proof of the estimate \eqref{eq:phiR} of  $\hat \phi$ in terms of $r^-$.}
We begin with a stability lemma, based on the discrete degenerate ellipticity of the adaptive finite difference scheme $L_h$ for the anisotropic Laplacian $-\diver(\cD \nabla\cdot )$, see \cite{bonnans2023monotone}. Our first ingredient is an elementary result due to Minkowski, dating back to 1900, and whose proof is recalled for completeness.

%DIF >  for a linear
%DIF > We say that a linear operator on $\bR^\cX$ is inverse positive, if it is invertible and the inverse matrix has non-negative entries in the canonical basis.

\begin{lemma}[Minkowski, see {\cite[Note 6.1]{berman1994nonnegative}}]
\label{lem:Minkowski}
If a matrix has non-negative off diagonal entries, and positive row sums, then it is invertible and the inverse has non-negative entries. %DIF > inverse positive.
\end{lemma}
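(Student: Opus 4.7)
}
The plan is to exploit the strict row-diagonal dominance forced by the two hypotheses and to represent the inverse via a Neumann series whose summands are entrywise non-negative. I will work with the convention (matching the setting in which the lemma is applied below, to matrices of the form $I+2\tau u^-(t) L_h$) that the off-diagonal entries of $M$ are non-positive and the row sums $s_i := \sum_j M_{ij}$ are positive. The first step is to decompose $M=D-B$, where $D=\operatorname{diag}(M_{11},\ldots,M_{nn})$ is the diagonal part of $M$ and $B_{ij}:=-M_{ij}$ for $i\neq j$, $B_{ii}:=0$. The off-diagonal sign hypothesis gives $B\geq 0$ entrywise, while the identity
\begin{equation*}
  M_{ii} \;=\; s_i + \sum_{j\neq i} B_{ij} \;>\; \sum_{j\neq i} B_{ij}
\end{equation*}
shows that $M_{ii}>0$, so $D$ is invertible with non-negative inverse.

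Next I would bound $\|D^{-1}B\|_\infty$. The $i$th row sum of $D^{-1}B$ equals
\begin{equation*}
  \sum_j (D^{-1}B)_{ij} \;=\; \frac{1}{M_{ii}}\sum_{j\neq i} B_{ij} \;=\; 1 - \frac{s_i}{M_{ii}} \;<\; 1,
\end{equation*}
so that $\|D^{-1}B\|_\infty = \max_i (1-s_i/M_{ii}) < 1$. Consequently the spectral radius of $D^{-1}B$ is $<1$, hence $I-D^{-1}B$ is invertible and
\begin{equation*}
  (I-D^{-1}B)^{-1} \;=\; \sum_{k\geq 0}(D^{-1}B)^k.
\end{equation*}
Every summand is entrywise non-negative because $D^{-1}B\geq 0$, so the sum is entrywise non-negative. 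Finally, factoring $M=D(I-D^{-1}B)$ yields that $M$ is invertible with $M^{-1}=(I-D^{-1}B)^{-1}D^{-1}\geq 0$ entrywise, as desired.

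The argument is essentially routine and I anticipate no serious obstacle; the only point deserving attention is that it is the \emph{strict positivity} of the row sums (not merely non-negativity) that delivers the strict inequality $\|D^{-1}B\|_\infty<1$, and hence the convergence of the Neumann series. This is precisely the gain over the weaker classical statement that a diagonally dominant matrix with non-positive off-diagonal entries is merely invertible.
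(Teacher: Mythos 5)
Your proof is correct and takes essentially the same approach as the paper's: both factor the matrix as $D(\Id - Z)$ with $D$ a positive diagonal matrix and $Z \geq 0$ zero on the diagonal with $\ell^\infty$ operator norm (maximum row sum) strictly below one, then invoke the convergent, entrywise non-negative Neumann series. You were also right to flip the sign convention — the lemma is applied to, and the paper's own proof implicitly concerns, matrices with \emph{non-positive} off-diagonal entries (as written the statement is false, e.g.\ for $\begin{pmatrix}1&1\\1&1\end{pmatrix}$), so ``non-negative'' in the lemma statement is a typo.
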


\begin{proof}
	By assumption, the matrix of interest can be written $A = D(\Id - Z)$, where $D$ is a diagonal matrix with positive diagonal entries, and $Z$ is a non-negative matrix with zeros on the diagonal and whose maximum row sum $s := \max_i \sum_j Z_{ij}$ satisfies $s <1$.
	Noting that $s = \|Z\|_{l^\infty\to l^\infty}$ is the operator norm of $Z$ in the discrete $l^\infty$ norm, we obtain that the series $A^{-1} = (\Id-Z)^{-1} D^{-1} = \sum_{k \geq 0} Z^k D^{-1}$ is convergent and componentwise non-negative.
\end{proof}

 We say that a linear operator on $\bR^\cX$ is inverse positive, if it is invertible and the inverse matrix has non-negative entries in the canonical basis.

\begin{proposition}
\label{prop:implLap}
	For any $u \in [0,\infty[^\cX$ the operator $\Id + u L_h$ on $\bR^\cX$ is inverse positive. 
	If in addition $\|L_h u\|_\infty < 1$, then $\|(\Id + u L_h)^{-1} f\|_1 \leq (1-\|L_h u\|_\infty)^{-1} \|f\|_1$, for all $f \in \bR^\cX$.
\end{proposition}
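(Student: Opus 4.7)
My plan is to establish inverse positivity of $A:=\Id+uL_h$ via \cref{lem:Minkowski}, and then extract the $\ell^1$ bound by combining this positivity with the self-adjointness of $L_h$.

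For inverse positivity, I would read the entries of $L_h$ directly from the pointwise formula \eqref{eq:LhExpl}: the row sums of $L_h$ vanish (the formula is telescoping), the off-diagonal entries of $L_h$ are non-positive (being the opposite of the non-negative coefficients $(2h)^{-2}\lambda^e(x\pm he)$ that appear at the offsets $\pm 2he$), and the diagonal entries of $L_h$ are their opposite. Since $u\geq 0$ acts by pointwise multiplication, the matrix $A$ then has off-diagonal entries of the sign required by \cref{lem:Minkowski}, diagonal entries at least equal to $1$, and each row summing exactly to $1$, yielding inverse positivity.

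For the norm estimate, given $f\in\bR^{\cX}$, I would introduce $v:=A^{-1}f$ and $g:=A^{-1}|f|$. Inverse positivity forces $g\geq 0$ and the componentwise domination $|v|\leq g$, so it suffices to bound $\|g\|_1$. Testing the identity $g+uL_hg=|f|$ against the constant function $1$ with the normalized measure $(2h)^d\sum_{x\in\cX}$, and using the self-adjointness of $L_h$ from \cref{lem:LhAdj} to transfer the operator from $g$ onto $u$, one obtains
$$\|f\|_1 \;=\; \|g\|_1 \;+\; (2h)^d\sum_{x\in\cX} g(x)\,L_h u(x) \;\geq\; (1-\|L_h u\|_\infty)\,\|g\|_1,$$
where the final step uses $g\geq 0$ together with the pointwise bound $L_hu(x)\geq-\|L_hu\|_\infty$. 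Dividing by the positive quantity $1-\|L_hu\|_\infty$ (this is where the hypothesis $\|L_hu\|_\infty<1$ enters) and recalling $\|v\|_1\leq\|g\|_1$ delivers the announced estimate.

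The key design choice in the proof is to test against the constant $1$ and then invoke self-adjointness: this exchanges the unsigned quantity $L_hg$ for the controlled quantity $L_hu$, and the positivity of $g$ (supplied by the first step) is exactly what permits the resulting sum to be controlled from below. Nothing here is particularly hard once the right test function is identified; the main conceptual hurdle is recognizing that degenerate ellipticity of $L_h$ (encoded in Minkowski's sign pattern), rather than its semi-definiteness, is the relevant structural property.
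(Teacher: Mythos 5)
Your proof is correct and follows essentially the same route as the paper's: inverse positivity is extracted from the sign pattern of $L_h$ via \cref{lem:Minkowski}, and the $\ell^1$ bound is obtained by summing the identity $g+uL_hg=|f|$ over $\cX$ and transferring $L_h$ onto $u$ via self-adjointness (\cref{lem:LhAdj}). The only cosmetic difference is in the reduction to non-negative data: the paper splits $f=f_+-f_-$ and uses linearity of $A^{-1}$, whereas you apply the estimate to $|f|$ and invoke the componentwise domination $|A^{-1}f|\leq A^{-1}|f|$ afforded by the non-negativity of $A^{-1}$; both are equally elementary and equally valid.
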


\begin{proof}
The matrix of the operator $L_h$ has non-positive off-diagonal entries, and its rows (or columns since it is symmetric) sum to zero, as is clear from the expanded form \eqref{eq:LhExpl}. 
Therefore, and since $u$ is non-negative, the matrix of $\Id + u L_h$ has non-positive off-diagonal entries, and the sum of each row is one, hence this operator is inverse positive by \mbox{%DIFAUXCMD
\cref{lem:Minkowski}}\hskip0pt%DIFAUXCMD
 , as announced.
%DIF >   which is a positive number. 
%DIF > Such a matrix is inverse positive, by a classical result of Minkowski dating back to 1900, see \cite[Note 6.1]{berman1994nonnegative}.

%DIF > This fits the conditions of an M-matrix, thus an inverse positive matrix, as announced. 

 %Alternative proof
%This is an application of the maximum principle, which we briefly recall. 
%Let $f : \bT_h \to \bR$, be such that $(\Id-u\Delta_h) f \geq 0$, and let $x \in \bT_h$ be such that $f(x)$ is minimal. Then $\Delta_h f(x) \geq 0$ and therefore $0 \leq [(\Id-u\Delta_h) f](x) = f(x)-u(x) \Delta_h f(x)$, showing that $f(x) \geq 0$. This property implies that $(\Id-u\Delta_h)$ is invertible, and that its inverse has non-negative entries. 

Now consider $f \geq 0$, and let $g := (\Id+u L_h)^{-1} f$, which is thus non-negative. Then %DIF > One has 
\begin{equation*}
	\frac{\|f\|_1}{(2h)^d} = \sum_{x \in \cX} f(x) = \sum_{x\in \cX} \big[g(x) + u(x) L_h g(x)\big] 
	= \sum_{x \in \cX} g(x)(1+ L_h u(x)) \geq \frac{\|g\|_1}{(2h)^d} (1-\|L_h u\|_\infty),
\end{equation*}
using the self-adjointness of $L_h$, see \cref{lem:LhAdj}. 
Observing that any $f \in \bR^\cX$ can be expressed as the difference $f = f_+ - f_-$ of its positive and negative part, with $\|f\|_1 = \|f_+\|_1 + \|f_-\|_1$, we conclude the proof by linearity of $(\Id+u L_h)^{-1}$. 
%\todoi{ % Do not remove.
%Note that $(\Id-u \Delta_h)^{-1}$ is always invertible, and 
%$\|(\Id-u \Delta_h)^{-1} f\|_\infty \leq \|f\|_\infty$. But the $\ell^1$ bound requires $\|\partial_x^2 u\|_\infty < 1$. 
%By transposition, this is like establishing 
%$$
%	\|(\Id-\Delta_h(u\,\cdot))^{-1} f\|_\infty \leq C \|f\|_\infty.
%$$
%However, the operator 
%$f-\Delta(u f) = f (1-u'') - f' u' - u f''$ is elliptic only when $u''<1$. We recover the same condition, which makes sense. 
%}
\end{proof}

In the following we choose the sign $\sigma=-$ corresponding to the \emph{implicit} scheme in \eqref{eq:residuePorous}, for stability considerations in view of \cref{prop:implLap}. %DIF > , and thus for readability we denote $u := u^-$ and $r := r^-$. 
In addition to the explicit and implicit schemes, we discuss in \cref{sec:arith} the semi-implicit scheme, which arises when considering a slightly modified energy functional.
%Note that the semi-implicit scheme, which is unstable in the $\ell^1(X)$ norm, is discussed in  \cref{sec:arith}.
Rearranging terms in \eqref{eq:residuePorous} we obtain 
\begin{align}
\label{eq:implLin}
	(1+2 \tau\, u^-(t) L_h) \hat \phi(t-\tau) &= \hat \phi(t+\tau) - 2 \tau\,  r^-(t), &
	\hat \phi(T)&=0,
\end{align}
where the spatial variable $x \in \cX$ is omitted for readability. 
We next assume that $u^-\geq 0$, in such way that $1+2 \tau\, u^-(t) L_h$ is invertible by \mbox{%DIFAUXCMD
\cref{prop:implLap}}\hskip0pt%DIFAUXCMD
. The solution $\hat \phi$ of \eqref{eq:implLin} can therefore be expressed in terms of the r.h.s., using the discrete Duhamel formula or simply an induction on $N=0,\cdots,N_\tau$ where $N_\tau := T/(2\tau)$ is the number of timesteps:
\begin{equation}
\label{eq:Duhamel}
	\phi(T-2\tau\, N) = -2\tau \sum_{0\leq n <N} 
	\Big[\prod_{n\leq k < N} \big(1+2\tau\, u^-(T-(2k+1)\tau)\, L_h\big)\Big]^{-1} 
	r^-(T-(2n+1) \tau)
\end{equation}
The announced estimate \eqref{eq:phiR} follows from \mbox{%DIFAUXCMD
\cref{prop:implLap}}\hskip0pt%DIFAUXCMD
, with the constant 
\begin{align*}
	\prod_{t \in \cT'} (1-2\tau \|L_h u^-(t)\|_\infty)^{-1}
	&\leq (1-\tau K)^{-N_\tau}
	\leq C_\cQ := \exp(T K),&
	\text{with } K &:= 2 \max_{t \in \cT'} \|L_h u^-(t)\|_\infty,
%	\exp(2 \tau \sum_{t \in \cT'} 
\end{align*}
where we assumed that $\tau K \leq 1/2$, hence $(1-\tau K)^{-1} \leq \exp(2 \tau K)$, in addition to $u^- \geq 0$. %DIF >   the non-negativity

Let us now show that the latter two conditions hold under \cref{assum:porous}, which states  that $\phi \in C^\infty([0,T] \times \bT^d,\bR)$ is a smooth function, defined in terms of the QPME solution $\bu > 0$ itself assumed to be smooth and positive.

By \cref{prop:consistency} one has $u^- = \bu + \cO(\tau+h^2)$, and therefore $u^- \geq 0$ as desired on $[\tau,T-\tau]\times \bT^d$, for any half timestep $0<\tau \leq \tau_*$ and half gridscale $0<h\leq h_*$, where $\tau_*,h_*>0$ depend only on $(\lambda,\phi)$.

In addition $-L_h u^- = \diver(\cD \nabla \bu) + \cO(\tau + h^2)$ by \mbox{%DIFAUXCMD
\cref{prop:consistency}}\hskip0pt%DIFAUXCMD
, showing that $K$ is bounded independently of $\tau\in ]0,\tau_*]$ and $h\in ]0,h_*]$, hence we may assume that $\tau_* K \leq 1/2$ as desired, up to reducing the maximum timestep $\tau_*>0$.

\paragraph{Proof of  lower bound {\normalfont (\ref{eq:EstimLR} , right)} .}
Since the estimated quantity $\cQ(\hat \phi; \phi)$  is a sum of perspective functions  $\cP$ , we begin with a convexity lemma for this functional.

%DIF < Recall that $r$ appears as the first argument to a perspective function contributing to $\cQ(\hat \phi;\phi)$. In order to obtain a lower bound on the latter quantity, we need a convexity lemma, and some control on the second argument.
\begin{lemma}
\label{lem:scalarPersp}
	One has $\|\cP(m,\rho)\|_1 \geq \cP(\|m\|_1,\|\rho\|_1)$, for all $m, \rho \in \bR^\cX$. 
\end{lemma}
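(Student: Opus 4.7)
The plan is a direct Cauchy--Schwarz argument after handling the degenerate points where $\cP$ is infinite.

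First I would dispose of the case in which the left-hand side is $+\infty$: if there exists $x\in\cX$ with $\rho(x)<0$, or with $\rho(x)=0$ and $m(x)\neq 0$, then $\cP(m(x),\rho(x))=+\infty$ by \eqref{eqdef:perspective}, so $\|\cP(m,\rho)\|_1=+\infty$ and the inequality is trivial. Thus I may assume $\rho\geq 0$ pointwise and that $m(x)=0$ whenever $\rho(x)=0$. In particular $\|\rho\|_1=(2h)^d\sum_x\rho(x)$.

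Next I would restrict the sums to the set $S:=\{x\in\cX\mid \rho(x)>0\}$, noticing that points of $\cX\setminus S$ contribute $0$ to both $\sum_x\cP(m(x),\rho(x))$ and $\sum_x|m(x)|$. On $S$ the perspective function reduces to $m(x)^2/(2\rho(x))$, and the Cauchy--Schwarz inequality applied to the vectors $\bigl(\sqrt{\rho(x)}\bigr)_{x\in S}$ and $\bigl(|m(x)|/\sqrt{\rho(x)}\bigr)_{x\in S}$ yields
\begin{equation*}
\Bigl(\sum_{x\in S}|m(x)|\Bigr)^{\!2}
\;\leq\;\Bigl(\sum_{x\in S}\rho(x)\Bigr)\Bigl(\sum_{x\in S}\frac{m(x)^2}{\rho(x)}\Bigr).
\end{equation*}

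Finally I would multiply both sides by $\tfrac12(2h)^d$ and rearrange, using that $(2h)^d\sum_{x\in S}\rho(x)=\|\rho\|_1$, $(2h)^d\sum_{x\in S}|m(x)|=\|m\|_1$, and $(2h)^d\sum_{x\in S}\tfrac{m(x)^2}{2\rho(x)}=\|\cP(m,\rho)\|_1$, which gives exactly $\|\cP(m,\rho)\|_1\geq \|m\|_1^2/(2\|\rho\|_1)=\cP(\|m\|_1,\|\rho\|_1)$. The only mild subtlety is the bookkeeping at the zero-density set, and that is resolved at the start of the argument; no real obstacle arises.
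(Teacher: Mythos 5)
Your proof is correct. The paper's official one-line proof invokes Jensen's inequality for the convex perspective function $\cP$ (applied after replacing $m$ by $|m|$, using that $\cP$ is even in its first argument), while you use the Cauchy--Schwarz inequality after carefully restricting to $\{\rho>0\}$; the paper itself points out immediately after the proof that these two routes yield the equivalent statement $(\sum_x|m(x)|)^2\leq\sum_x\frac{m(x)^2}{\rho(x)}\sum_x\rho(x)$, so you have essentially taken the paper's acknowledged alternative, with slightly more explicit bookkeeping of the degenerate cases $\rho(x)\leq 0$.
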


\begin{proof}
	Follows from Jensen's inequality applied to the perspective function, which is convex.
%DIF < 	Apply Jensen's inequality to the perspective function, which is convex. 
%DIF < 	Alternatively, assuming $\rho \geq 0$, by Cauchy-Schwartz: 
%DIF < 	$(\sum_{x\in \cX} |m(x)|)^2 \leq (\sum_{x\in \cX} \rho(x)) (\sum_{x\in \cX} m(x)^2/\rho(x))$.
\end{proof}

In the case of positive densities, one could also directly use the Cauchy-Schwartz inequality, which yields the equivalent statement: $(\sum_{x\in \cX} |m(x)|)^2 \leq \sum_{x \in \cX} \frac{m(x)^2}{\rho(x)} \sum_{x \in \cX} \rho(x)$.

\begin{lemma}
\label{lem:int1}
	Let $\phi \in \bR^\cX$ and $\rho := 1+L_h \phi$. If $\rho \geq 0$ then $\|\rho\|_1 = 1$.
\end{lemma}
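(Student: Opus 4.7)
The plan is to unpack the definition of the averaged $\ell^1$ norm and exploit the fact that $\rho$ is assumed non-negative, so the absolute value in $\|\cdot\|_{\ell^1(\cX)}$ can be dropped. First I would write
\[
\|\rho\|_1 = (2h)^d \sum_{x \in \cX} \rho(x) = (2h)^d \sum_{x\in\cX} 1 + (2h)^d \sum_{x\in\cX} L_h\phi(x).
\]
The first term equals $1$, since $\cX = \bT_h^d$ has exactly $(1/(2h))^d$ elements under \cref{assum:step}. So it only remains to show that the discrete Laplacian of $\phi$ sums to zero.

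For this I would appeal to the self-adjointness identity \eqref{eq:LhAdj} of \cref{lem:LhAdj}, applied with $u \equiv 1$ and $v = \phi$:
\[
\sum_{x\in\cX} 1 \cdot L_h\phi(x) = \sum_{x\in\cX} \phi(x)\, L_h 1(x).
\]
Since the expanded form \eqref{eq:LhExpl} of $L_h$ is a weighted sum of second-order finite differences, which all vanish when applied to the constant function $1$, we have $L_h 1 \equiv 0$ on $\cX$, and hence $\sum_x L_h\phi(x) = 0$.

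Combining the two contributions yields $\|\rho\|_1 = 1$ as claimed. There is no real obstacle here; the only thing to verify carefully is that the finite set $\cX$ has the expected cardinality $(2h)^{-d}$, which is exactly the content of \cref{assum:step}, and that the discrete operator $L_h$ annihilates constants, which is immediate from \eqref{eq:LhExpl}.
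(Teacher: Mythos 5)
Your proof is correct and follows essentially the same route as the paper's: drop the absolute value using $\rho\geq0$, use the cardinality $\#\cX = (2h)^{-d}$ from \cref{assum:step}, and show $\sum_x L_h\phi(x)=0$ by the self-adjointness of $L_h$ (\cref{lem:LhAdj}) combined with the fact that $L_h$ annihilates constants (\eqref{eq:LhExpl}).
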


\begin{proof}
We have $\sum_{x \in \cX} L_h \phi(x) = 0$, since $L_h$ is self-adjoint, see \cref{lem:LhAdj}, and since it vanishes on constants, see \eqref{eq:LhExpl}. Thus $\|\rho\|_1 = (2h)^d \sum_{x \in \cX} (1+L_h \phi(x)) = 1$, since $\rho\geq 0$ and $\#(\cX) = \#(\bT_h)^d = (2h)^{-d}$.
\end{proof}
We obtain
\begin{equation}
\label{eq:R2Q}
	\frac{1}{T} R^2 
	\leq 2\tau \sum_{t \in \cT'} \|r(t)\|^2_1 % recall r:=r^-
	= 2 \tau \sum_{t \in \cT'} \cP(\|r(t)\|_1, \|\rho(t-\tau)+\hat \rho(t-\tau)\|_1) \leq 2 \cQ(\hat \phi;\phi),
\end{equation}
using successively (i) Cauchy-Schwartz's inequality and $T=2 \tau N_\tau$, (ii) the fact that $\rho(t-\tau)+\hat \rho(t-\tau) \geq 0$, otherwise $\cQ(\hat \phi; \phi)=\infty$ in which case the announced inequality is clear, and thus $\|\rho(t-\tau)+\hat \rho(t-\tau)\|_1 = \|2+L_h (\phi+\hat \phi)\|_1 = 2$ by \cref{lem:int1}, and (iii) \cref{lem:scalarPersp} together with the non-negativity of the perspective function (hence of the  contributions to $\cQ(\phi; \hat \phi)$ associated with the other sign choice $\sigma=+$). \mbox{%DIFAUXCMD
\Cref{th:cvPorous} }\hskip0pt%DIFAUXCMD
follows as described in the proof outline \eqref{eq:minPclose}.

\section{Efficient implementation}
\label{sec:implem}
\def\per{\mathrm{per}}
\def\bi{\boldsymbol{i}}
\def\dt{dt}

We present in this section an implementation of the primal-dual proximal algorithm \cite[Algorithm 2]{chambolle2011primaldual} for the minimization of the energies \eqref{eqdef:porousEnergy} and \eqref{eqdef:BurgersEnergy} associated with the BBB formulations of the QPME and Burgers's equation, subject to the relevant constraints. (For problem instances of small size, a damped Newton method can also be used, following \cite{kouskiya2024inviscid}. See the discussion in  \cref{sec:num}.)
Our approach is closely related to the proximal implementation of the Benamou-Brenier formulation of optimal transport \cite{papadakis2014optimal}, with mild differences related to: the change in boundary conditions w.r.t.\ the time variable, the additional linear term in the Burgers viscous equation,
% (which can be related with Schr\"odinger's relaxation of optimal transport, see \cite[\S 5.2]{brenier2020examples}),  --> Intro
 the use of the harmonic rather than the arithmetic mean of densities, and the anisotropic tensor field $\cD$ in the QPME. 

Our implementation relies for simplicity on a global Fourier transform in time and space, as opposed to space only in \cite{papadakis2014optimal}.
For that purpose, we introduce the extended time grids
\begin{align*}
	\cT_{\per} &:= 
	%2T \bZ/2\tau \bZ = 
	\{-T+2\tau,\cdots,T-2\tau,T\}, &
	\cT'_\per &:= 
	%\tau + \cT_\per = 
	\{-T+\tau, T+ 3 \tau, \cdots, T-\tau\},
\end{align*}
where the half timestep $\tau>0$ is fixed throughout this section. 
The mapping $\rho$ defined on $\cT_\per$ is implicitly extended by $2T$-periodicity, in such way that $\partial_\tau \rho$ is well defined on $\cT'_\per$ in (\ref{eq:cstrPer}, iii), (\ref{eq:mene}, iii) and (\ref{eq:BurgersCstr}, left) below.
%The embedding interval $]-T,T] \supset \cT_\per,\cT'_\per$ is equipped with periodic boundary conditions.
%\todoi{here}
%We regard them as subsets of $\cT_\per, \cT'_\per \subset ]-T,T]$ equipped with periodic boundary conditions.
In this section only, we denote by $\sign(t)\in \{-1,1\}$ the sign of $t \in \cT'_\per$.
We fix likewise the half gridscale $h>0$ and denote by $\cX := \bT_h^d$ the discrete domain (not to be confused with the characteristic function $\chi$ below). 
Focusing as a first step on the QPME, we consider the minimization of 
\begin{equation}
\label{eqdef:EperQPME}
\cE_\per(m,\rho) := 
	%\tau (2h)^d 
	\sum_{x \in \cX} \sum_{t \in \cT'_\per} \Big[\cP\big(m(t,x), \rho(t+\tau,x)\big) - \sign(t) m(t,x) u_0(x) \Big]+ \sum_{x \in \cX} \chi(\rho(T,x)-1).
	\vspace{-1mm}
\end{equation}
 %where $\sign(t) \in \{-1,1\}$ denotes the usual sign function. --> Introduced above
The characteristic function is defined as $\chi(s) =0$ if $s=0$, $\chi(s) = \infty$ otherwise, and it implements the terminal boundary condition $\rho(T)=1$. 
The objective function should be minimized among all $m \in \bR^{\cT'_\per\times \cX}$ and $\rho\in \bR^{\cT_\per \times \cX}$ subject to the linear constraints of oddness of $m$, evenness of $\rho$, and continuity equation:
\begin{align}
\label{eq:cstrPer}
	m(t) &= -m(-t),\ t \in \cT'_\per, &
	\rho(t) &= \rho(-t),\ t \in \cT_\per, &
	\partial_\tau \rho(t) &= L_h m(t),\ t \in \cT'_\per.
\end{align}
Let us emphasize that the time periodization, the parity constraints, and the time shift $\rho(t+\tau,x)$ of the density in $\cE_\per$, together, are equivalent to the sum over the two signs $\sigma = \pm$ in the original energy $\cE^\Porous_{\tau h}$, see \eqref{eqdef:porousEnergy}.
Thus minimizing $\cE_\per$ subject to \eqref{eq:cstrPer} is equivalent to minimizing $\cE^\Porous_{\tau h}$ on the domain of \cref{lem:domPorous}.

The chosen minimization algorithm is the iterative numerical method \cite[Algorithm 2]{chambolle2011primaldual}, which requires computing the \emph{proximal operator} of the energy $\cE_\per$, and the orthogonal projection onto the linear space defined by \eqref{eq:cstrPer}. 
As described below, these two steps are made simple by the fully separable structure of \eqref{eqdef:EperQPME}, and the Fourier-compatible structure of \eqref{eq:cstrPer}.
Recall that the proximal operator of a proper convex lower-semi-continuous $f:\bR^n \to ]-\infty,\infty]$ is defined for any $\lambda>0$ as
\begin{equation*}
	\prox_{\lambda f} (x) := \argmin_{y \in \bR^n} \frac{1}{2} \|x-y\|^2 + \lambda f(y).
\end{equation*}
%The proximal operator is invariant under unitary change of coordinates, such as discrete Fourier transforms.
The proximal operator of the perspective function is obtained by solving a univariate third degree polynomial equation, see \cite[Example 3.8]{combettes2018perspective}. 
%The proximal operator of the matrix perspective function \eqref{eqdef:perspective} can be computed by solving an unconstrained optimization problem using a quasi-smooth Newton method \cite{won2020proximity}. 

\begin{proposition}
\label{prop:proxSep}
The proximal value $(m',\rho') = \prox_{\lambda \cE_\per}(m,\rho)$ is expressed independently for each point and time in terms of the proximal operator of the perspective function: 
 \begin{equation*}
\big(m'(t,x),\rho'(t+\tau,x)\big) = \prox_{\lambda \cP}\big(m(t,x)+\lambda\sign(t) u_0(x),\, \rho(t+\tau,x)\big)
\end{equation*}
 for all $x \in \cX$ and $t \in \cT_\per \sm \{T-\tau\}$, except for the special case $m'(T-\tau) = \big(m(T-\tau) + \lambda u_0\big)/(1+\lambda)$ and $\rho'(T)=1$.
\end{proposition}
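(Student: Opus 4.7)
The plan is to exploit the fully separable structure of the objective $\cE_\per$ and of the squared distance $\tfrac12\|m-m'\|^2+\tfrac12\|\rho-\rho'\|^2$ defining the proximal operator. Each summand of $\cE_\per$ in \eqref{eqdef:EperQPME} depends only on the pair of variables $\bigl(m'(t,x),\rho'(t+\tau,x)\bigr)$ for $(t,x)\in \cT'_\per\times\cX$, and the map $t\mapsto t+\tau$ is a bijection from $\cT'_\per$ onto $\cT_\per$, so every entry of $\rho'$ appears in exactly one perspective term. The squared distance being itself a sum over $(t,x)$ of independent quadratics, the prox subproblem decouples into one independent two-dimensional minimization per pair $(t,x)$, as was announced.

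For a fixed $x\in\cX$ and $t\in\cT'_\per\setminus\{T-\tau\}$, the characteristic function $\chi$ is absent and, writing $(a,b):=\bigl(m(t,x),\rho(t+\tau,x)\bigr)$ and $(a',b'):=\bigl(m'(t,x),\rho'(t+\tau,x)\bigr)$, the subproblem reads
\begin{equation*}
	\min_{a',b'}\ \tfrac12(a-a')^2+\tfrac12(b-b')^2+\lambda\cP(a',b')-\lambda\,\sign(t)\,u_0(x)\,a'.
\end{equation*}
Completing the square in $a'$ absorbs the affine term $-\lambda\sign(t)u_0(x)a'$ by shifting $a\mapsto a+\lambda\sign(t)u_0(x)$, up to an additive constant independent of $(a',b')$. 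The minimizer is therefore $\prox_{\lambda\cP}\bigl(a+\lambda\sign(t)u_0(x),\,b\bigr)$, which is exactly the first formula claimed. The value is well defined and computable by the univariate cubic resolution recalled in \cite[Example 3.8]{combettes2018perspective}.

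The case $t=T-\tau$ is special because $t+\tau=T$ and the characteristic function $\chi(\rho'(T,x)-1)$ forces $\rho'(T,x)=1$. Substituting this into the perspective function yields $\cP(m'(T-\tau,x),1)=\tfrac12 m'(T-\tau,x)^2$, and the subproblem reduces to the scalar quadratic
\begin{equation*}
	\min_{m'}\ \tfrac12\bigl(m(T-\tau,x)-m'\bigr)^2+\tfrac\lambda2 (m')^2-\lambda\, u_0(x)\, m',
\end{equation*}
where we used $\sign(T-\tau)=+1$. The first order optimality condition gives $(1+\lambda)m'=m(T-\tau,x)+\lambda u_0(x)$, which is the stated special formula, and the $(b-1)^2$ contribution plays no role in the minimization since $b'$ is pinned to $1$. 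The main (very mild) obstacle is simply to verify the indexing: that $t\mapsto t+\tau$ maps $\cT'_\per$ bijectively onto $\cT_\per$ so that the decoupling per $(t,x)$ is genuine and no $\rho'$-component is left unconstrained or doubly constrained.
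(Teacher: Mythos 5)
Your proof is correct and takes essentially the same approach as the paper: exploit the separable structure (each summand of $\cE_\per$ depends on a disjoint pair $(m(t,x),\rho(t+\tau,x))$), absorb the linear term $-\lambda\sign(t)u_0(x)m'(t,x)$ by a translation (completing the square), and handle $t=T-\tau$ separately because $\chi(\rho'(T,x)-1)$ pins $\rho'(T,x)=1$, reducing the perspective term to a simple quadratic in $m'$. You add a genuinely useful sanity check that the paper leaves implicit, namely that $t\mapsto t+\tau$ is a bijection $\cT'_\per\to\cT_\per$, so every component of $\rho$ is hit exactly once; without this, the claim of independent decoupling would not be justified. (Incidentally, you correctly read the index set as $\cT'_\per\setminus\{T-\tau\}$, where $m$ lives, rather than the paper's $\cT_\per\setminus\{T-\tau\}$, which appears to be a typo.)
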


\begin{proof}
The energy $\cE_\per$ is a sum of contributions depending on disjoint sets of variables, namely the pairs $(m(t,x), \rho(t+\tau,x))$ for each $x \in \cX$ and $t \in \cT'_\per$. Thanks to this separable structure, the corresponding proximal values can be computed independently. 
%The results associated to the different times and points can be computed independently thanks to the separable structure of $\cE_\per$: it is a sum of independent contributions depending on disjoint sets of variables, namely the pairs $(m(t,x), \rho(t+\tau,x))$ for each $x \in \cX$ and $t \in \cT'_\per$. 
The additional linear term $u_0(x) m(t,x)$ is equivalent to a translation of the projected point.
The case $t=T-\tau$ is specific due to the characteristic function enforcing the constraint $\rho(T,x)=1$, in such way that the objective is quadratic w.r.t.\ $m(t,x)$. % for which the proximity operator is explicit. 
\end{proof}

%\todoi{Probably simpler to use a single space-time Fourier transform}

We compute the orthogonal projection onto the linear space defined by the constraints \eqref{eq:cstrPer} in two steps: we first project onto the subspace $E$ defined by the parity constraints (\ref{eq:cstrPer}, i and ii) using the explicit expressions $m_\odd(t) = (m(t)-m(-t))/2$ and $\rho_\even(t) := (\rho(t)+\rho(-t))/2$, and then project onto the subspace $F$ defined by the continuity equation (\ref{eq:cstrPer}, iii). 
Indeed if the pair  $(m,\rho)$ obeys the continuity equation, then so does $(-m(-\cdot), \rho(-\cdot))$, hence also $(m_\odd,\rho_\even) \in E$ and $(m_\even,\rho_\odd) \in E^\perp$, where $m_\even := m-m_\odd$ and $\rho_\odd := \rho-\rho_\even$, in such way that the next lemma applies.

\begin{lemma}
\label{lem:succProj}
Let $E,F$ be closed vector subspaces of a Hilbert space $H$, such that $F = (F\cap E)\oplus(F\cap E^\perp)$. Then the orthogonal projections on 
$E$, $F$ and $E \cap F$,
%$E \cap F$, $F$, and $E$, 
obey $P_{E \cap F} = P_F P_E$.
\end{lemma}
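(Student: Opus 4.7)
The plan is to split the projection $P_F$ using the assumed orthogonal decomposition of $F$, then observe that the ``bad'' summand annihilates outputs of $P_E$.

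First I would set $G_1 := F \cap E$ and $G_2 := F \cap E^\perp$. These are closed subspaces that are mutually orthogonal (any vector in $G_1$ lies in $E$, any vector in $G_2$ lies in $E^\perp$). The hypothesis $F = G_1 \oplus G_2$ is then an \emph{orthogonal} direct sum, and consequently the projection onto $F$ decomposes as $P_F = P_{G_1} + P_{G_2}$.

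Next, for arbitrary $x \in H$, set $e := P_E(x)$, so that $e \in E$ and $x-e \in E^\perp$. Because $G_2 \subseteq E^\perp$, the vector $e$ is orthogonal to $G_2$, which gives $P_{G_2}(e) = 0$. Therefore
\begin{equation*}
    P_F P_E(x) \;=\; P_F(e) \;=\; P_{G_1}(e) \;=\; P_{E \cap F}(e).
\end{equation*}
It remains to identify $P_{E \cap F}(e)$ with $P_{E \cap F}(x)$. Since $E \cap F \subseteq E$ and $x - e \in E^\perp$, the vector $x-e$ is orthogonal to $E \cap F$, so adding it to $e$ does not change the projection, i.e.\ $P_{E \cap F}(x) = P_{E \cap F}(e)$. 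Combining these two equalities yields $P_{E\cap F} = P_F P_E$ as claimed.

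There is no real obstacle here; the only subtle point is that the hypothesis $F = (F\cap E) \oplus (F \cap E^\perp)$ is exactly what is needed to split $P_F$ into the sum of the two orthogonal projections $P_{G_1}$ and $P_{G_2}$. Without it, the identity $P_F P_E = P_{E \cap F}$ would generally fail (this is the classical failure of commutation of orthogonal projections), and one would need an infinite iteration à la von Neumann to recover $P_{E \cap F}$.
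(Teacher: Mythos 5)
Your proof is correct, and it takes a genuinely different (though closely related) route from the paper's. The paper decomposes the \emph{argument} rather than the operator: it first treats $x \in E^\perp$ (both sides vanish), then for $x \in E$ it verifies the variational characterization of $P_F(x)$ directly by testing $x - P_{E\cap F}(x)$ against an arbitrary $z \in F$, which it decomposes as $z = z_1 + z_2$ with $z_1 \in E\cap F$ and $z_2 \in E^\perp \cap F$. You instead lift the hypothesis to the operator level, observing that $F = G_1 \oplus G_2$ is an \emph{orthogonal} sum of closed subspaces, hence $P_F = P_{G_1} + P_{G_2}$, and then trace the action on $e := P_E(x)$, killing the $P_{G_2}$ term since $e \perp G_2$. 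Your approach buys a cleaner, uniform treatment of arbitrary $x$ (no case split on $E$ vs.\ $E^\perp$), at the mild cost of invoking the standard lemma that orthogonal projections onto mutually orthogonal closed subspaces add up to the projection onto their sum; the paper's approach is slightly more self-contained, appealing only to the defining orthogonality property of $P_{E\cap F}$. Both correctly identify that the hypothesis $F = (F\cap E)\oplus(F\cap E^\perp)$ is exactly what makes the commutation work.
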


\begin{proof}
If $x\in E^\perp$, then $P_F P_E(x) = P_F(0)=0$ and $P_{E \cap F}(x) = 0$ since $E \cap F \subset E$. Let us now consider $x \in E$, and $y:=P_{E \cap F}(x)$. Consider $z \in F$, which by assumption can be expressed as $z = z_1+z_2$ where $z_1 \in E \cap F$ and $z_2 \in E^\perp \cap F$. 
	Then $\<x-y, z_1\> = 0$ by definition of the orthogonal projection on $E \cap F$, and $\<x-y,z_2\>=0$ since $x,y\in E$ and $z_2 \in E^\perp$. Thus $\<x-y,z\>=0$, $\forall z \in F$, and therefore $y = P_F(x)$ as announced, which concludes the proof. 
\end{proof}

The projection onto the discretized continuity equation constraint (\ref{eq:cstrPer}, iii) is computed using a global space-time Fourier transform. More precisely, denote for all $\zeta \in \cT_\per$ and $\xi \in \cX$
%with $\be(s):=\exp(2\pi i s)$
\begin{align}
\label{eq:fft_QPME}
	M(\zeta,\xi) &:= \! \sum_
%	{t \in \cT_\per,\, x \in \cX} \!
	{\substack{t \in \cT_\per\\ x \in \cX}}
	m(t+\tau,x)
%	\, \boldsymbol{\overline{e}}\big(\frac{\zeta t} {2\tau} +\frac{\<\xi,x\>} {2h}\big)
%	\exp\Big(-\bi\pi \big(\frac{\zeta t} \tau +\frac{\<\xi,x\>} h\big)\Big)
	e^{-\bi\pi (\frac{\zeta t}{\tau} +\frac{\<\xi,x\>}{h})},
	%\be\big(\zeta t + \<\xi,x\>\big), 
	&
	P(\zeta,\xi) &:= \sum_
	{\substack{t \in \cT_\per\\ x \in \cX}}
	\rho(t,x) 
	e^{-\bi\pi (\frac{\zeta t}{\tau} +\frac{\<\xi,x\>}{h})},
\end{align}
 assuming for simplicity that $2T=1$. A half time step $m(t+\tau,x)$ is introduced in (\ref{eq:fft_QPME}, left) to account for the fact that the momentum $m$ is defined on a staggered grid in time; similarly, the discretized Burgers' equation below involves a density defined on a staggered grid in space, whose Fourier transform is expressed in terms of $\rho(t,x+h)$.

In the case of the standard isotropic Laplacian $-L_h u(x) := \Delta_h u(x) = (2h)^{-2} \sum_{1 \leq i \leq d} [u(x+2h e_i)-2 u(x) + u(x-2h e_i)]$, where $(e_1,\cdots,e_d)$ denotes the canonical basis, the discretized continuity equation (\ref{eq:cstrPer}, iii) reads as follows in Fourier coordinates
\begin{equation*}
	P(\zeta,\xi) \frac{\exp(2\pi \bi \zeta) - 1}{2\tau}= M(\zeta,\xi) \sum_{1 \leq i \leq d}\frac{\sin^2(\pi \xi_i)}{h^2} 
\end{equation*}
The projection can be computed independently for each pair of Fourier coordinates $(\zeta,\xi)$
using the following basic lemma in dimension $2$. This elementary result, stated without proof, also addresses all projections required in the numerical implementation of the variants below.
\begin{lemma}
\label{lem:projCplx}
	Let $x\in \bC^d$ and $v\in \bC^d\sm \{0\}$. The orthogonal projection of $x$ onto the line $\bC v$ (resp.\ orthogonal space $(\bC v)^\perp$) is obtained as $(v^*x) x/\|v\|^2$ (resp.\ $x-(v^*x) x/\|v\|^2$), with $v^* := \overline v^\top$.
\end{lemma}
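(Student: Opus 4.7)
The plan is to establish the projection formulas directly from the characterizing property of orthogonal projection in a complex inner product space, namely that $P_L(x)$ is the unique element of $L$ such that $x - P_L(x) \perp L$.

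First, I would parametrize the one-dimensional complex subspace $\bC v$ by $\lambda \mapsto \lambda v$ with $\lambda \in \bC$, and seek the unique $\lambda^*$ such that $x - \lambda^* v \perp v$ (which by $\bC$-linearity is equivalent to $x-\lambda^* v \perp \bC v$). The orthogonality condition in the standard Hermitian inner product $\<y,z\> := z^*y = \overline{z}^\top y$ reads $v^*(x-\lambda^* v) = 0$, hence $\lambda^* = (v^*x)/(v^*v) = (v^*x)/\|v\|^2$. This yields the projection onto $\bC v$ as $\lambda^* v = (v^*x)\,v/\|v\|^2$ (the right-hand side in the lemma should be read with $v$ rather than $x$ in the numerator, as is clear from the derivation).

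Second, since $\bC^d = \bC v \oplus (\bC v)^\perp$ is an orthogonal decomposition, the projection onto the orthogonal complement is simply the complementary component, $x - (v^*x)\,v/\|v\|^2$. There is no real obstacle here; the only point worth checking is that the Hermitian formulas $v^* w$ and $\|v\|^2 = v^*v$ are used consistently with the $\bC$-linearity convention (sesquilinearity in the second argument), so that $x - \lambda^* v$ is genuinely orthogonal to every $\mu v$, $\mu \in \bC$, which follows from $\<\mu v, x - \lambda^* v\> = (x-\lambda^* v)^* (\mu v) = \mu \, \overline{v^*x - \lambda^* \|v\|^2} = 0$. Uniqueness of the decomposition then gives the two stated formulas at once.
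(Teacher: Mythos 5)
Your proof is correct and complete; the paper explicitly states this lemma \emph{without proof}, so there is no paper argument to compare against. You use the standard characterization: parametrize $\bC v$, impose the Hermitian orthogonality condition $v^*(x-\lambda^* v)=0$ to solve for $\lambda^*$, and then obtain the complementary projection from the orthogonal direct sum $\bC^d=\bC v\oplus(\bC v)^\perp$. You also correctly spot and flag the typo in the lemma's statement: the formula should read $(v^*x)\,v/\|v\|^2$ (and $x-(v^*x)\,v/\|v\|^2$), with $v$ rather than $x$ in the numerator of the projection term, which is indeed what both the derivation and the intended use in \eqref{eq:fft_QPME} require.
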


On the other hand, if the coefficients $\lambda^e$ vary over the domain, then we introduce additional variables $m_e,n_e \in \bR^{\cT'_\per\times (\cX+h e)}$ for each offset $e\in E$ (unfortunately increasing memory usage as a byproduct). %DIF > and computational cost 
The constraint (\ref{eq:cstrPer}, iii) is removed, and replaced with 
\begin{align}
\label{eq:mene}
	m_e &= \partial_h^e m, &
	n_e &= \lambda^e m_e, &
	\partial_\tau \rho = \sum_{e \in E} \partial_h^e n_e,
\end{align}
as well as the oddness constraints $m_e(-t)=-m_e(t)$ and $n_e(-t)=-n_e(t)$, $t \in \cT'_\per$. 
The constraints (\ref{eq:mene}, i and iii) are added to \eqref{eq:cstrPer} and treated in space-time Fourier coordinates using \cref{lem:projCplx} in dimension $\#(E)+1$ (projecting on a line for (\ref{eq:mene}, i), and on a hyperplane for (\ref{eq:mene}, iii)), after a preliminary projection onto the oddness and evenness constraints. On the other hand, the constraint (\ref{eq:mene}, ii), for all $e \in E$, is added to the objective functional $\cE_\per(m,\rho)$ in the form of the characteristic function $\chi(n_e-\lambda^e m_e)$, and treated using \cref{lem:projCplx} in dimension $2$. 

\paragraph{Burgers' equation.} The energy \eqref{eqdef:BurgersEnergy} is reformulated as follows: \begin{equation*}
%\label{eq:BurgersEnergyPer}
	\sum_{x \in \cX'} \sum_{\sigma=\pm} \sum_{t \in \cT'_\per} \Big[
	\cP\big(
	\sign(t)m_\sigma(t,x)+a_\sigma(t+\tau,x),
%	w_\sigma(t,x), 
	\rho(t+\tau,x+\sigma h)\big)
	-\sign(t)m_\sigma(t,x) u_0(x)\Big] + \chi(\rho(T,x)-1),
\end{equation*}
up to the multiplicative constant $\tau h$.
We duplicated in this expression the momentum variable: $m_+ = m_-$, and we introduced an auxiliary variable which is also duplicated: $a_+=a_-$, see (\ref{eq:BurgersCstr}, right) below. As a result, the energy has a separable structure: it is a sum of contributions depending on the disjoint $5$-tuples of variables $(\rho(t+\tau,x+h), m_+(t,x), a_+(t+\tau,x), m_-(t,x+2h), a_-(t+\tau,x+2h))$ for all $x \in \cX'$ and $t \in \cT'_\per$. 
The corresponding proximal values are thus computable independently, using the explicit expression of the proximal operator of the two-dimensional perspective function $(m,\rho) \in \bR^2\times \bR \mapsto \cP_2(|m|,\rho)$ \cite[Example 3.8]{combettes2018perspective} (indeed $\cP(m_0,\rho) + \cP(m_1,\rho) = \cP_2( (m_0,m_1),\rho)$ by construction), and the following lemma, which is stated without proof since it immediately follows from the co-variance of the proximal operator under the isometric change of coordinates
$(m,\rho,a) \mapsto (\frac{m+a}{\sqrt 2}, \rho, \frac{m-a}{\sqrt 2})$.

%\cref{lem:proxSum} below. 
\begin{lemma}
\label{lem:proxSum}
	Let $f : \bR^n\times \bR \to ]-\infty,\infty]$ be proper, convex, lower-semi-continuous, and positively $2$-homogeneous w.r.t.\ the first variable, and let $F(m,\rho,a) := f(m+a,\rho)$. 
	Then 
	\begin{align*}
		\prox_{\tau F}(m,\rho,a) &= \big(
		\frac{u + v}{\sqrt 2}, \tilde \rho, \frac{u-v}{\sqrt 2}\big),&
	\text{where } (u,\tilde \rho) &:= \prox_{2\tau f} (\frac{m+a}{\sqrt 2},\rho), &
	\text{and } v &:= \frac{m-a}{\sqrt 2}.
	\end{align*}	
\end{lemma}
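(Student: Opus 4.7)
The plan is to reduce the lemma to the trivial observation that the proximal operator of a function which does not depend on some of its coordinates acts as the identity on those coordinates, after applying an isometric change of variables.

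First I would introduce the orthogonal linear map $T(m,\rho,a) := \bigl(\frac{m+a}{\sqrt 2}, \rho, \frac{m-a}{\sqrt 2}\bigr)$, whose inverse is $T^{-1}(u,\rho,v) = \bigl(\frac{u+v}{\sqrt 2}, \rho, \frac{u-v}{\sqrt 2}\bigr)$. A direct computation shows $T$ preserves the Euclidean norm on $\bR^n \times \bR \times \bR^n$. Next I would express $F$ in the new coordinates: since $m+a = \sqrt 2\, u$, one has $F \circ T^{-1}(u,\rho,v) = f(\sqrt 2\, u, \rho) = 2\, f(u,\rho)$, where the last equality uses the positive $2$-homogeneity of $f$ with respect to its first argument. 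Thus $F \circ T^{-1}$ depends only on $(u,\rho)$, and not on $v$.

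The second step is to apply the standard covariance identity $\prox_{\tau (g \circ T)} = T^{-1} \circ \prox_{\tau g} \circ T$, valid for any proper l.s.c.\ convex $g$ and any orthogonal $T$ (which follows immediately from the definition of $\prox$, since $T$ preserves the quadratic penalty $\tfrac{1}{2}\|\cdot\|^2$). Writing $F = (F \circ T^{-1}) \circ T$ and applying this identity yields
\[
\prox_{\tau F}(m,\rho,a) = T^{-1}\bigl(\prox_{\tau (F\circ T^{-1})}(T(m,\rho,a))\bigr).
\]
Since $F \circ T^{-1}(u,\rho,v) = 2 f(u,\rho)$ is separable between the blocks $(u,\rho)$ and $v$, the proximal operator splits accordingly: the $v$-component is left unchanged, while on $(u,\rho)$ one computes $\prox_{\tau \cdot 2 f} = \prox_{2\tau f}$.

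Putting the pieces together with $T(m,\rho,a) = \bigl(\frac{m+a}{\sqrt 2}, \rho, \frac{m-a}{\sqrt 2}\bigr)$ gives the announced formula. The only subtle point in the argument is the use of the $2$-homogeneity of $f$, which is exactly what turns the scale factor $\sqrt 2$ into a multiplicative factor $2$ in front of $f$, and hence into a doubling of the proximal parameter $\tau \to 2\tau$; this is why the hypothesis of $2$-homogeneity is essential and cannot be dropped.
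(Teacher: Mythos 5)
Your proof is correct and follows exactly the route the paper gestures at: the paper states the lemma without proof, noting only that it "immediately follows from the co-variance of the proximal operator under the isometric change of coordinates $(m,\rho,a)\mapsto(\frac{m+a}{\sqrt 2},\rho,\frac{m-a}{\sqrt 2})$", and your argument is a clean and complete elaboration of that remark, correctly identifying where the $2$-homogeneity converts the scale factor $\sqrt 2$ into the doubled proximal parameter $2\tau$.
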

%\begin{proof}
%Use the co-variance of the proximal operator with the isometry 
%$(m,\rho,a) \mapsto (\frac{m+a}{\sqrt 2}, \rho, \frac{m-a}{\sqrt 2})$.
%%	Define $G(\alpha,\beta,\rho) := f_0(\alpha,\rho) + f_1(\beta)$, where $f_0 := 2f$ and $f_1:=0$. 
%%	Then $\prox_{\tau G}(\alpha,\beta,\rho) = \prox_{\tau f_0}(
%%	Let $F := f_0 \circ P_0 + f_1 \circ P_1$, where $P_0^\top P_0 + P_1^\top P_1 = \Id$ and $P_1^\top P_0 = 0$. Then $\prox_{\tau F}(x) = P_0^\top \prox_{\tau f_0} (P_0 x) + P_1^\top \prox_{\tau f_1} (P_1 x)$. Apply to $P_0(m,a,\rho) := (\frac{m+a}{\sqrt 2},\rho)$, $f_0 := 2f$, $P_1(m,a,\rho) := $
%\end{proof}
The optimization variables obey the discretized continuity equation, and defining constraint,
\begin{align}
\label{eq:BurgersCstr}
	\partial_\tau \rho + \partial_h m_\sigma &= 0, &
	a_\sigma &= -\nu \partial_h \rho,
\end{align}
for any sign $\sigma=\pm$ (recall that we use duplicated variables $m_+=m_-$ and $a_+=a_-$),
as well as the parity constraints $m_\sigma(-t)=-m_\sigma(t)$, $t\in \cT'_\per$, and $\rho(-t)=\rho(t)$ and $a_\sigma(-t)=a_\sigma(t)$, $t\in \cT_\per$.
For the implementation we successively project thanks to \cref{lem:succProj} on the duplication constraints, then the parity constraints, and finally the PDE constraints \eqref{eq:BurgersCstr}  using space-time Fourier coordinates and \cref{lem:projCplx}. 

Finally, let us mention that the auxiliary variables $a_\sigma \in \bR^{\cT_\per \times \cX}$, $\sigma = \pm$, and the rotation trick \cref{lem:proxSum}, can be discarded for simplicity and efficiency in the non-viscous case $\nu=0$.

%\begin{remark}[Newton method]
%The minimization of the energy \eqref{eqdef:EphiIntro}, which unconstrained except for $\phi(T) = 0$, and is smooth in the neighborhood of the solution under the assumptions of our convergence results \cref{prop:cvFin,th:cvPorous} (which imply a positive definite density $\rho$), can also be addressed using a standard Newton method. Each Newton iteration requires solving a sparse symmetric positive definite linear system of dimension $N_\tau d$ where $N_\tau = T/\dt$ is the number of time steps and $d$ is the number of spatial dimensions. 
%For this reason, is it limited to small numerical examples ($N_\tau d \lesssim 1000$), where it was used to validate the proximal solver implementation. 
%\end{remark}

\section{Numerical results}
\label{sec:num}

We present in this subsection numerical results for the QPME, one- and two-dimensional, isotropic or anisotropic,
and for %the viscous and non-viscous 
Burgers' viscous equation, 
obtained with the proposed numerical methods, see \cref{sec:implem}.
Let us immediately acknowledge that, unfortunately and consistently with \cite[Theorem 5.2.2]{brenier2020examples} as discussed below \cref{assum:Burgers}, the proposed method appears unsuited for Burgers' inviscid equation after shock formation, unless we are only interested in the final value at $t=T$ where the entropy solution is still recovered in the continuous setting, while it is not as $t<T$.
An extensive literature has been devoted to the numerical solution of the QPME and Burgers' equation, see \cite{bonkile2018systematic,li2018anisotropic} and references therein; we not compare our results with or even review the state of the art methods for these PDEs in this paper, which should instead be regarded as an early investigation of the radically new BBB numerical approach.
The numerical implementation is available on demand and relies on the Python\textsuperscript{\textregistered} language and the Taichi\textsuperscript{\textregistered} library for just-in-time compilation of CPU and GPU code.

The computational cost and the numerical accuracy of the numerical solutions are dictated by three parameters: the half timestep $\tau>0$, the half gridscale $h>0$, as well as the number of iterations $N_{\prox}$ of the primal dual proximal method.
Indeed, proximal algorithms applied to constrained and/or non-smooth problems are subject to the rather slow $\cO(1/N_{\prox})$ convergence rate, see the discussion of numerical cost below; in practice we often choose $N_{\prox}$ between $2\,000$ and $12\, 000$. 
For comparison purposes, a damped Newton method (whose step is adjusted using a line search) was also implemented, following \cite{kouskiya2024inviscid}. It minimizes the same energy functional $\cE_{\tau h}^\Porous$ or $\cE_{\tau h}^\Burgers$ but expressed in terms of the dual variable $\phi$ rather than $(m,\rho)$, and augmented with a logarithmic barrier penalty enforcing the positivity of the problem density $1+L_h \phi$ or $1-\partial_h \phi$, see \cref{lem:domPorous,lem:domBurgers}.
When applicable, this second approach in practice minimizes the energy functional to machine precision in $6$ to $270$ iterations depending on the experiment; however, it does not scale to large problem instances
since it relies on the inversion of ill-conditioned linear systems with $N_\tau N_h^d$ unknowns, which quickly becomes intractable as $N_\tau$ and $N_h$ increase.

We use periodic boundary conditions for simplicity, and consistently with the theoretical analysis.
The extension to Neumann boundary conditions for the QPME is straightforward, by doubling the physical domain size and extending the solution by reflection (in practice, this amounts to replacing the FFT with a cosine transform in \cref{sec:implem}). 
Dirichlet-type boundary conditions for Burgers' equation are discussed in \cite{kouskiya2024inviscid}.

For reference, let us recall the Barenblatt \cite{vazquez2007porous} explicit solution $u^\Porous$ to the QPME, which is non-smooth and compactly supported, and a classical explicit solution $u^\Burgers$ to Burgers viscous equation obtained via the Hopf-Cole transform 
%of a Gaussian kernel 
\cite{whitham2011linear}:
\begin{align}
\label{eqdef:Barenblatt}
	u^\Porous(t,x) &:= \frac{2}{t^\alpha} \max \big\{0, \gamma-\frac{\beta}{4} \frac{ \|x\|^2}{t^{2\beta}}\big\}, &
	u^\Burgers(t,x) &:= \sqrt{\frac{\nu}{\pi t}}\, \frac{\delta \exp(\frac{-x^2}{4 \nu t})}{1+\frac{\delta} 2 \erfc(\frac{x} {\sqrt{4\nu t}})}, 
\end{align}
where 
$\alpha:=d/(d+2)$, $\beta:=1/(d+2)$, and $\gamma,\delta>0$ are arbitrary positive constants. 
We denoted by $\erfc(x) := \frac{2}{\sqrt \pi} \int_x^\infty \exp(-s^2) \diff s$ the complementary error function. In the process of running these experiments, we conjectured, and then proved, a closed form expression for the dual potential associated with the Barenblatt solution, in the BBB formulation of the QPME, see \cref{sec:Barenblatt}.

\begin{figure}
	\def\w{5.2cm}
 \includegraphics[width=\w]{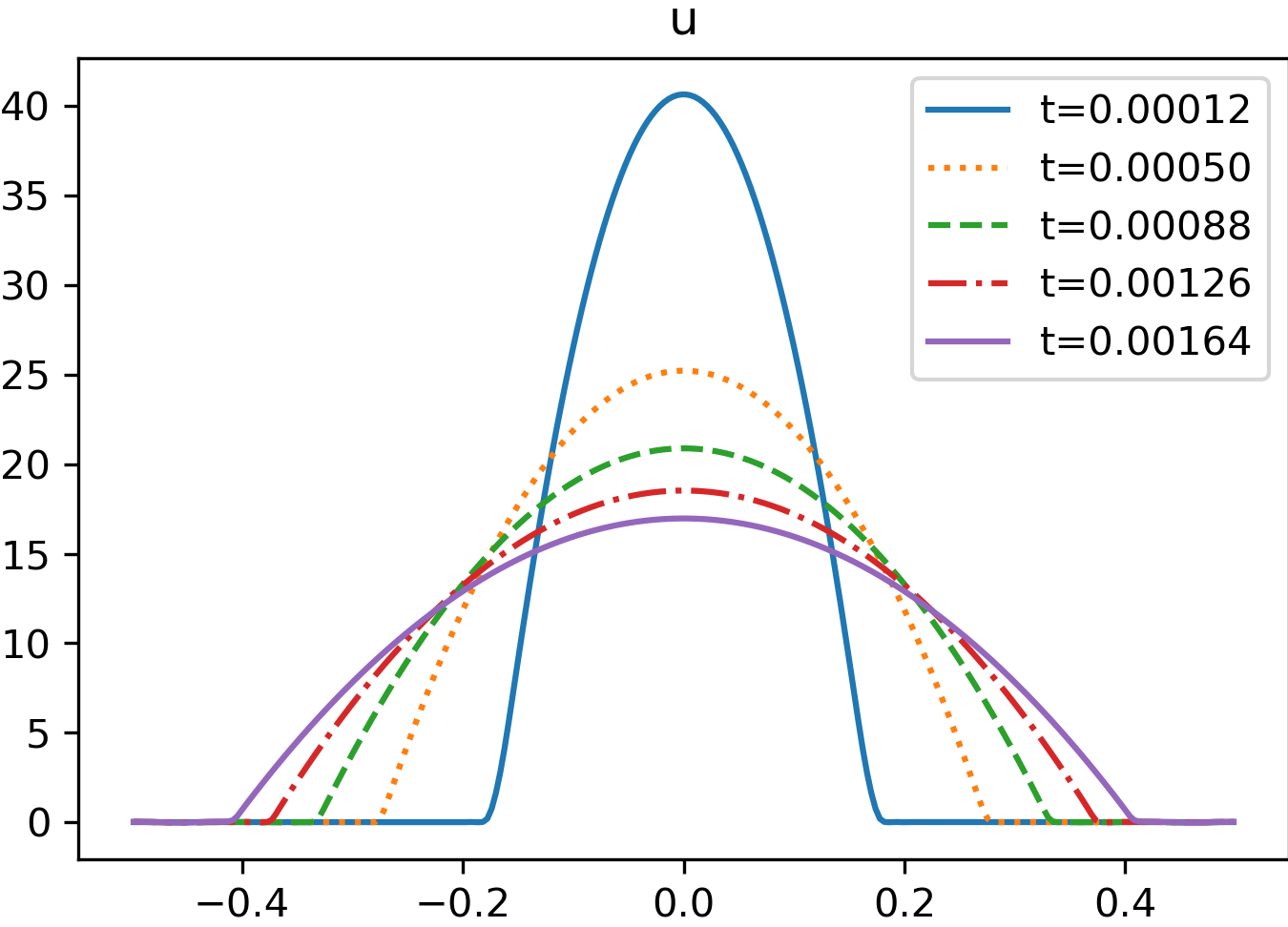}	
\includegraphics[width=\w]{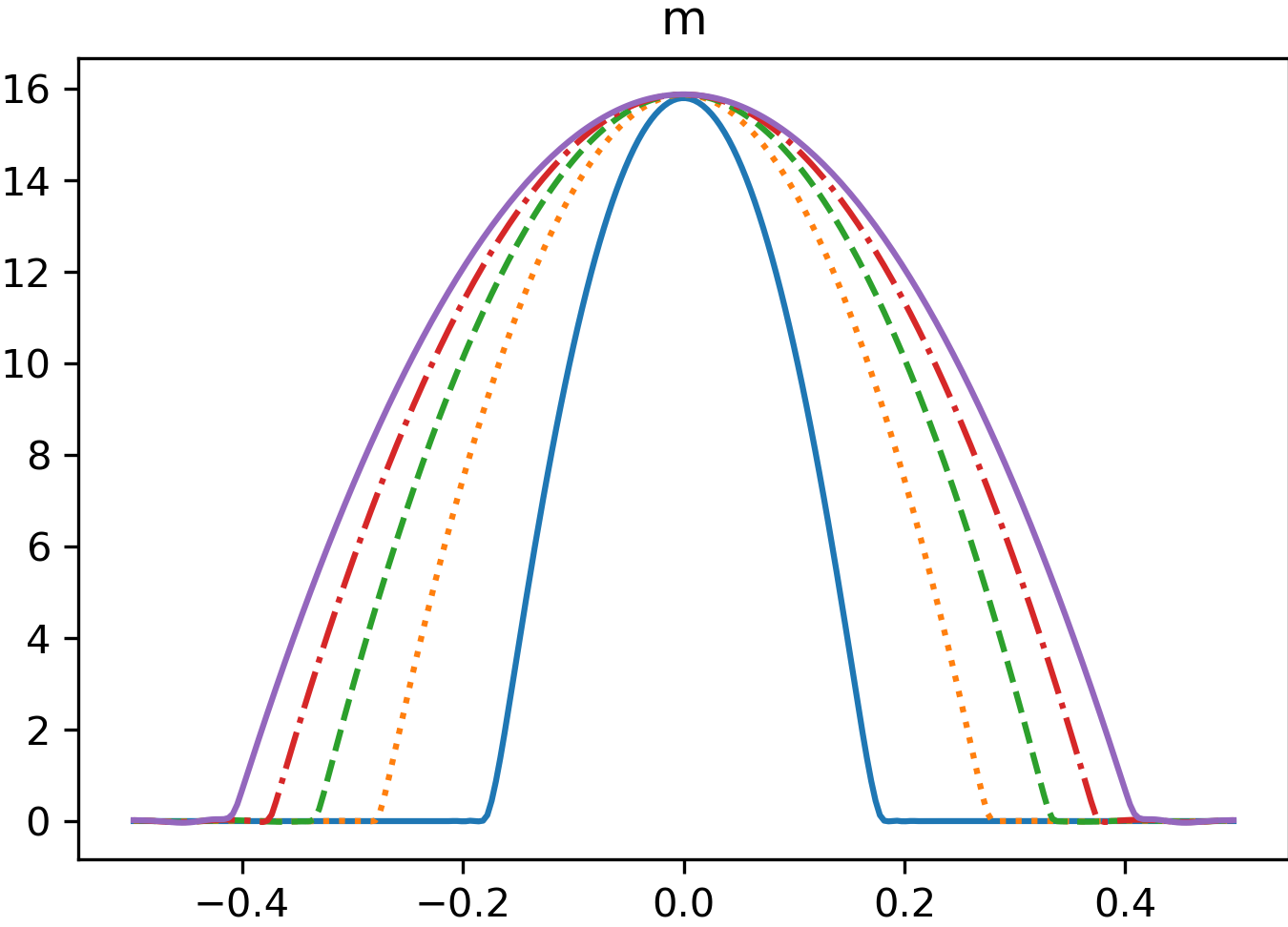}	
\includegraphics[width=\w]{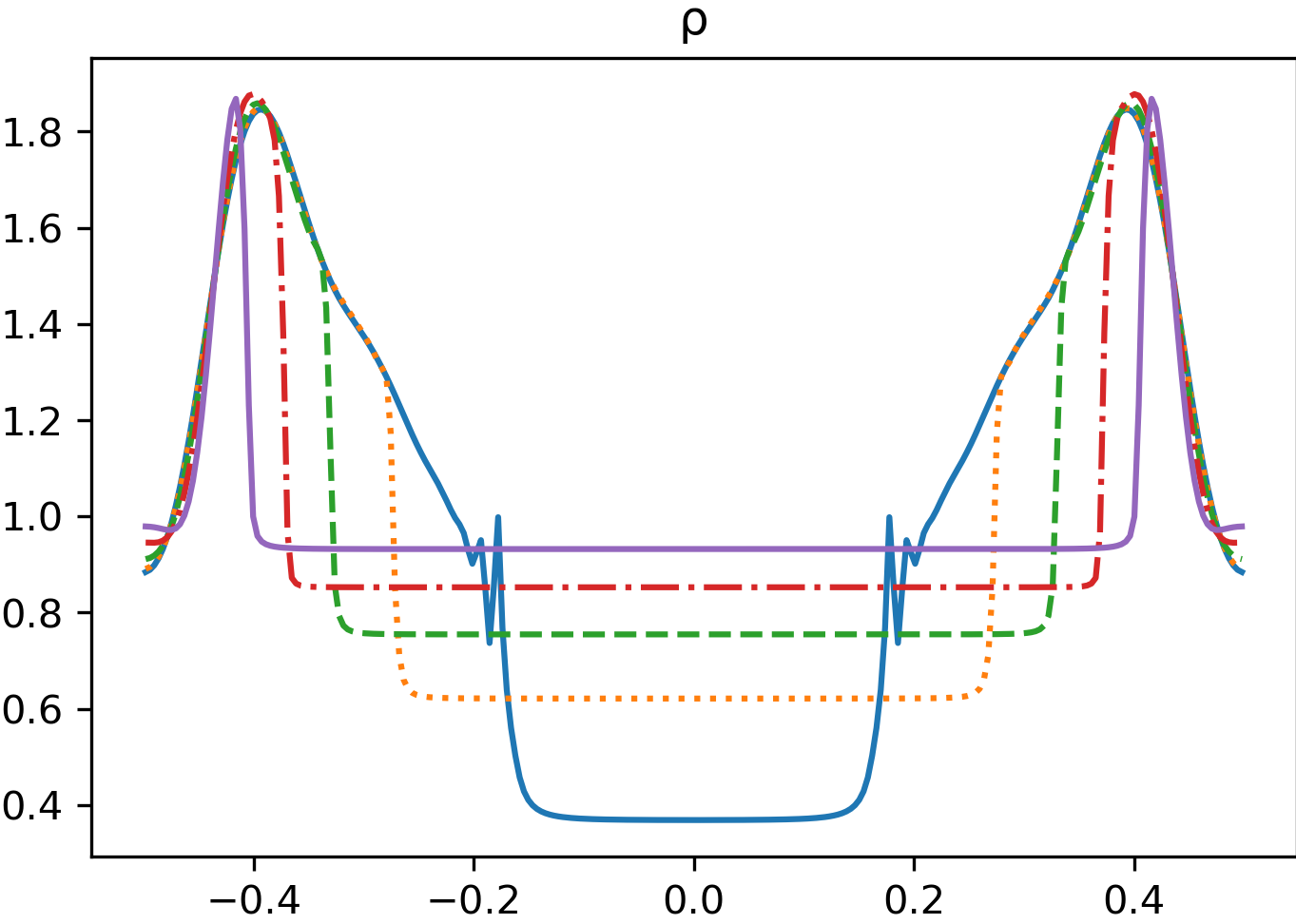}	
 \includegraphics[width=\w]{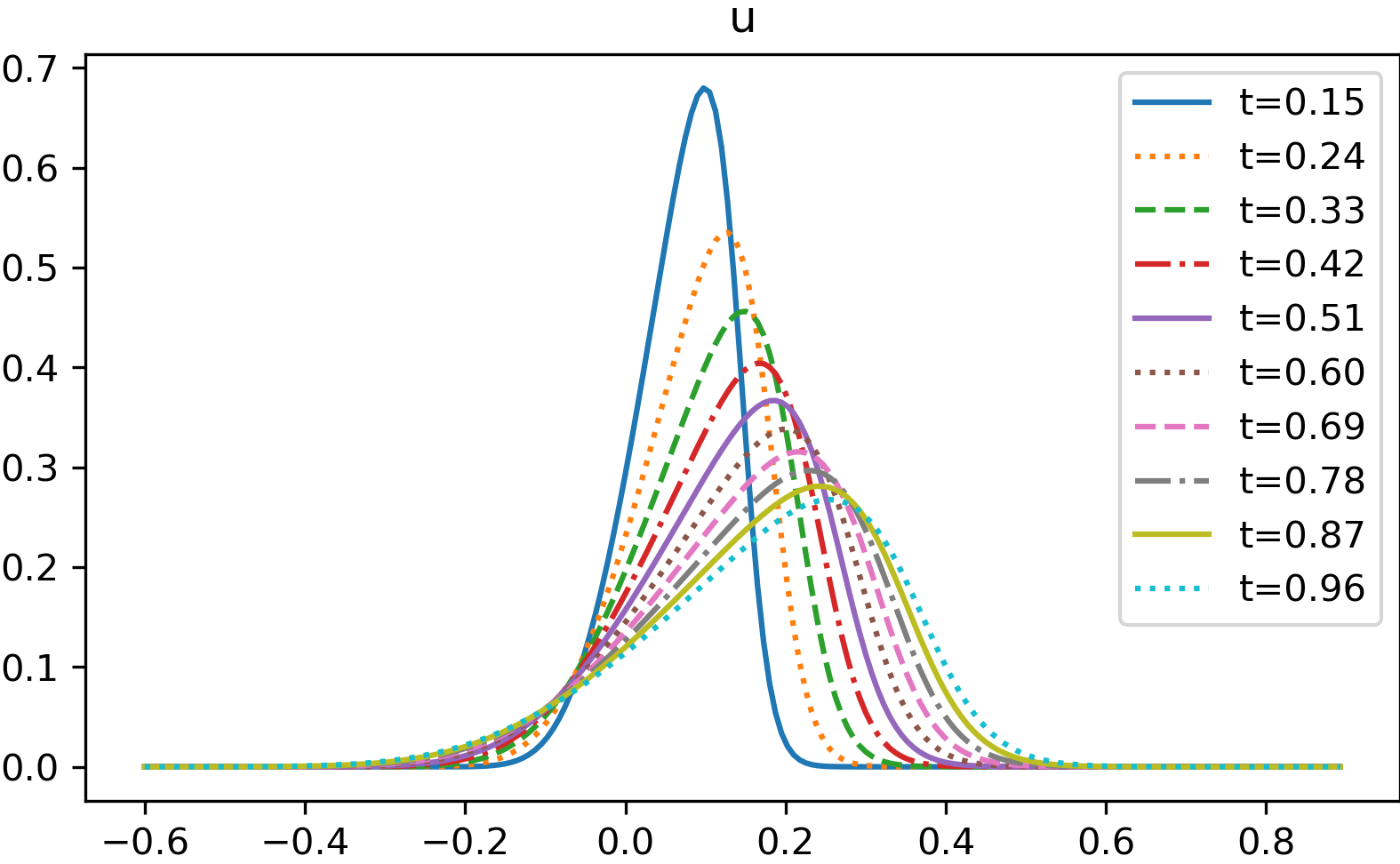}
\includegraphics[width=\w]{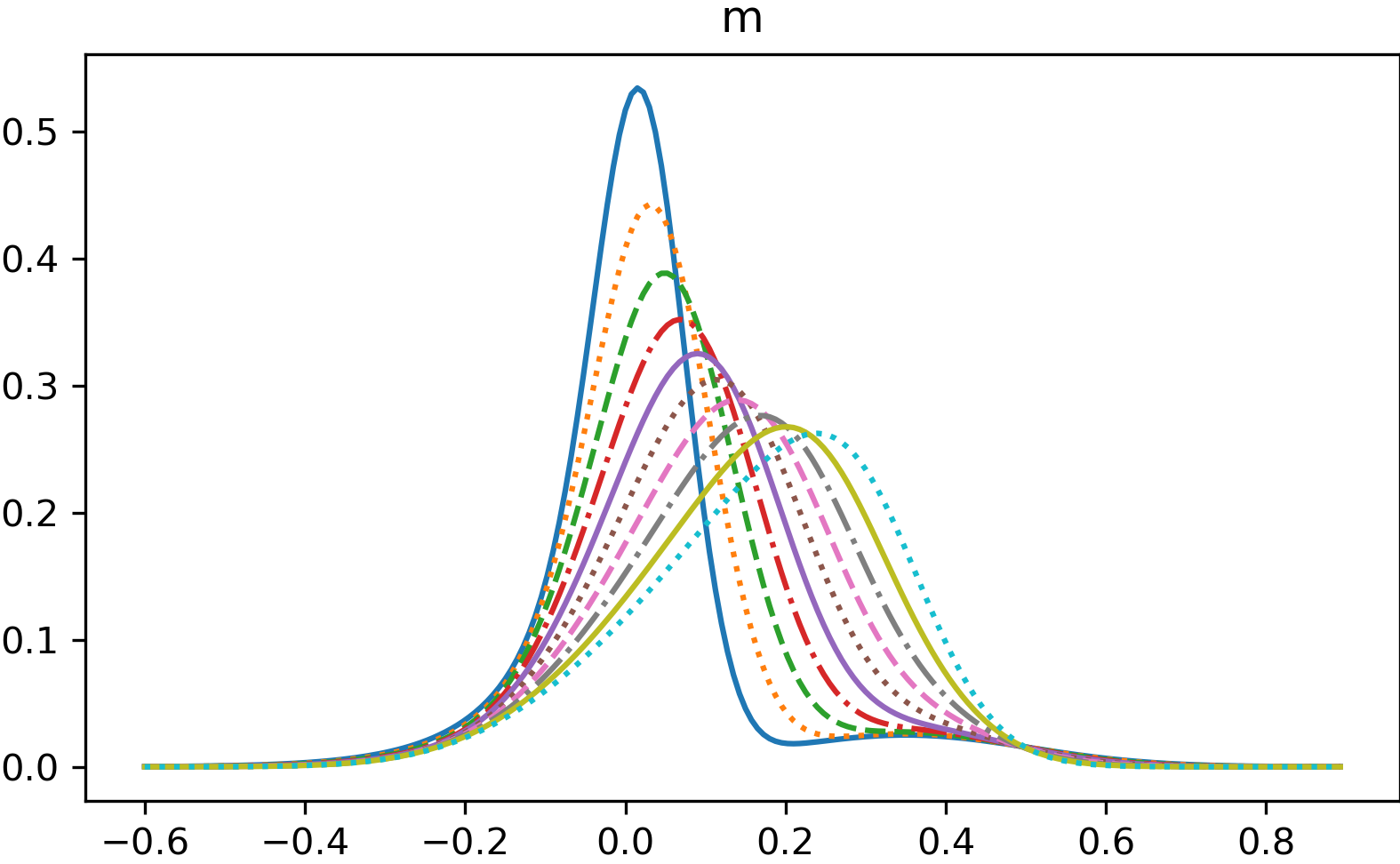}
\includegraphics[width=\w]{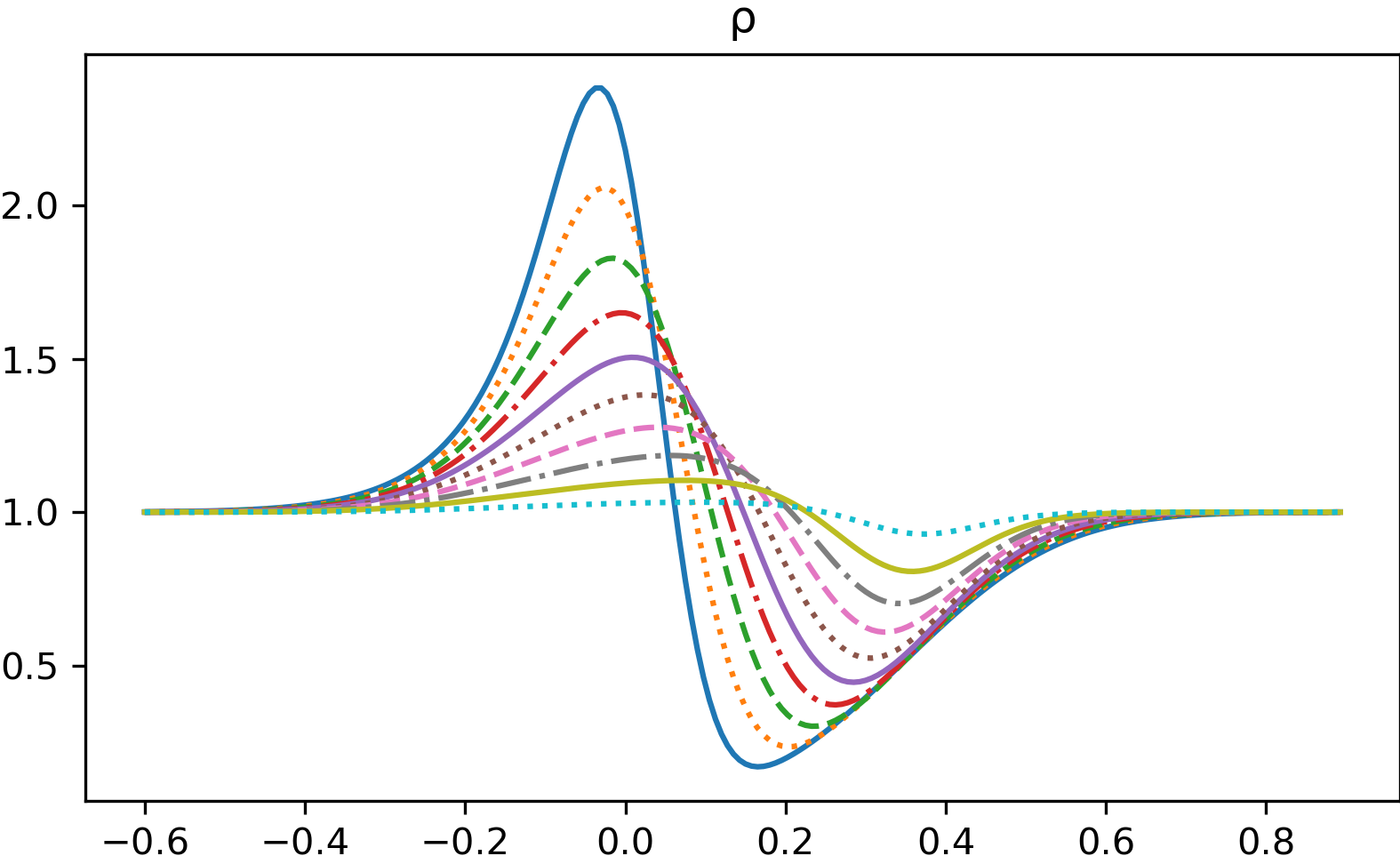}
\caption{
Top left : Barenblatt profile (\ref{eqdef:Barenblatt}, left), with parameter $\gamma=1$, on the time interval $[10^{-4},2\times 10^{-3}]$. Bottom left : explicit solution to Burgers' equation (\ref{eqdef:Barenblatt}, right), with $\nu=10^{-2}$ and $\delta = \exp(\Re)-1$ with $\Re=5$ the Reynolds number. 
Top and bottom, center and right: numerically computed auxiliary variables $m$ and $\rho$, for the corresponding BBB formulation (\ref{eq:pdeMRho}).  %optimization problems. 
}
\label{fig:exactSol}
\end{figure}

% --------------------------------------------------------------------

\paragraph{Discussion of numerical cost.}
The proposed BBB numerical method for the QPME has computational cost $\tilde \cO(N_{\prox} N_\tau N_h^d)$ where the ``tilde'' ignores the logarithmic factors associated with the fast Fourier transform. Memory usage is $\cO(N_\tau N_h^d)$, and typical accuracy $\cO(N_{\prox}^{-1} + N_\tau^{-2} + N_h^{-2})$.
For comparison, the simple explicit scheme (\ref{eq:arithScheme},  with $\theta=1$) has computational cost $\cO(N_\tau N_h^d)$, memory usage $\cO(N_h^d)$, and typical accuracy $\cO(N_\tau^{-1} + N_h^{-2})$, but is subject to the CFL condition $N_\tau \gtrsim N_h^2$. Let us acknowledge that the slow convergence of the primal-dual solver, and the higher memory usage, make the BBB approach the costlier one usually, except in specific edge cases such as the use of very large timesteps discussed in the second paragraph of \cref{subsec:numQPME}.
Some optimizations of the BBB numerical implementation are proposed in \cite{kouskiya2024inviscid} (for Burgers' equation), such as concatenating the numerical solutions obtained over small time sub-intervals, and using a well chosen initial guess and ``base state'', see \cref{remark:finalT}.

\subsection{Anisotropic Quadratic Porous medium equation}
\label{subsec:numQPME}

\begin{figure}
\def\h{3.7cm}
  \includegraphics[height=\h]{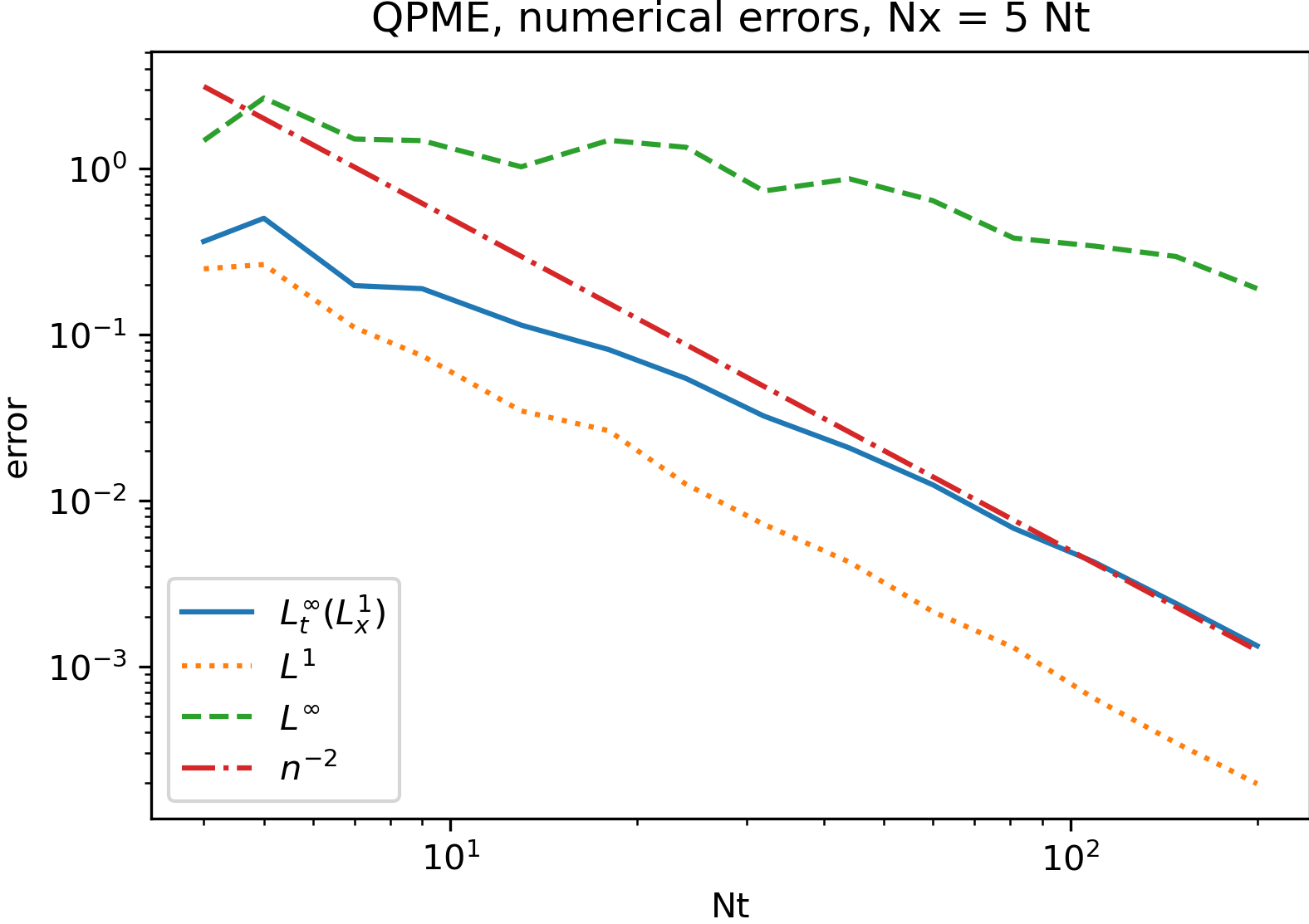}
%DIF >  Generated by /Users/jean-mariemirebeau/Dropbox/Programmes/2024/12_Decembre/BBB_Notebooks_copy/tiBurgers.ipynb
	\includegraphics[height=\h]{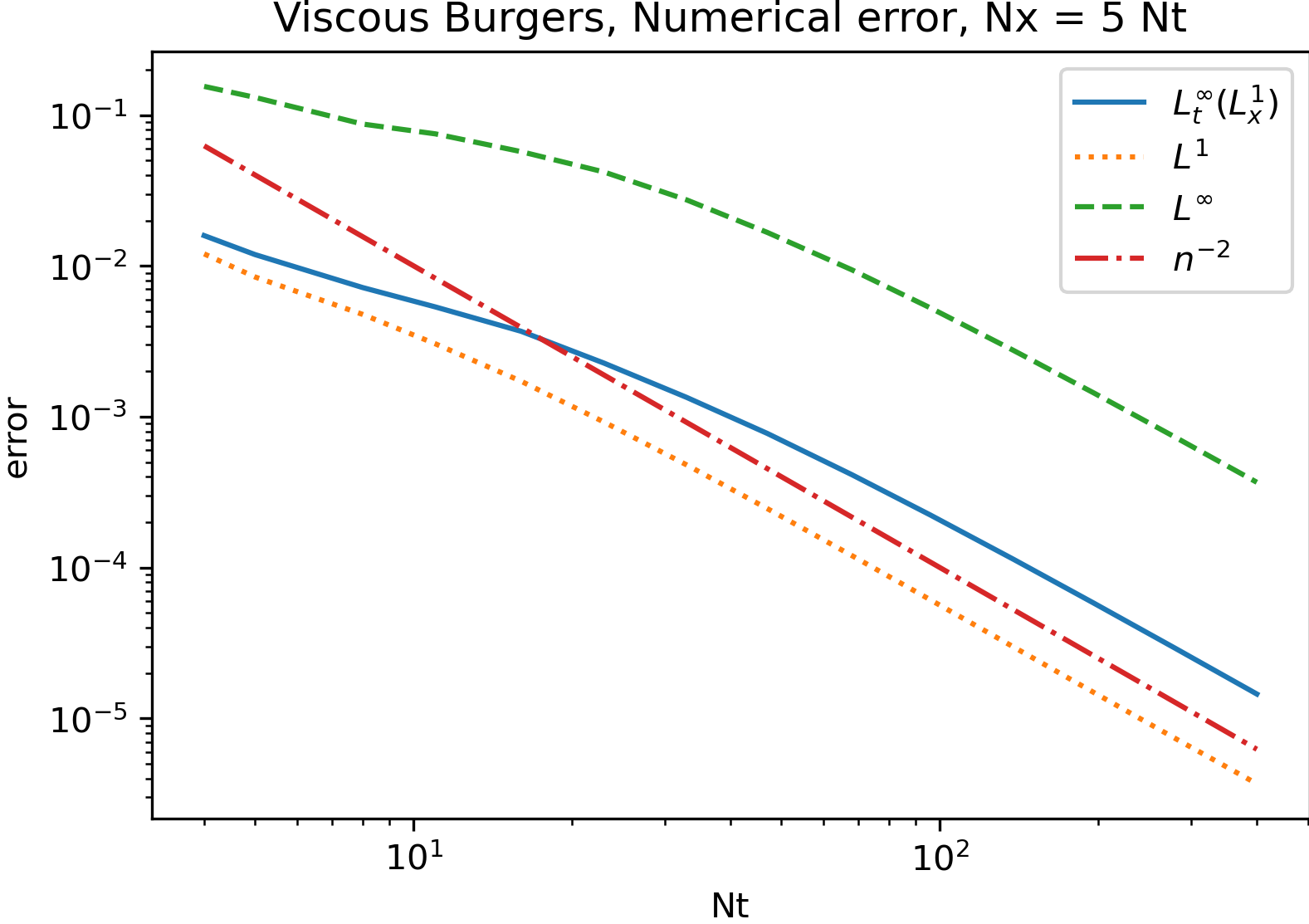}
	\includegraphics[height=\h]{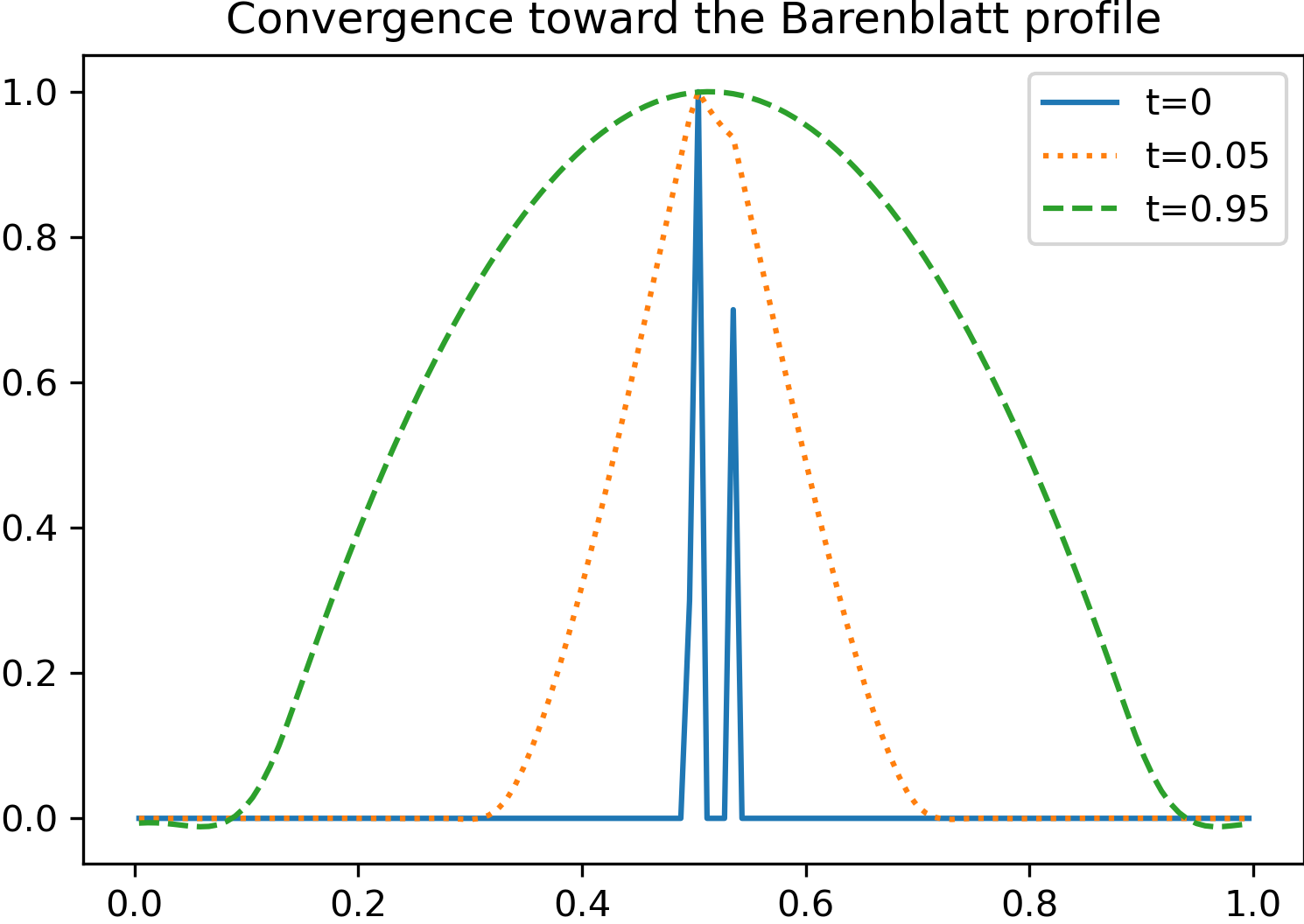}
	\caption{
	Left: log-log plot of numerical error, between the Barenblatt profile with the parameters of \cref{fig:exactSol}, and the numerical solution of the discretized QPME, using $4 \leq N_\tau \leq 200$ timesteps and $N_h = 5 N_\tau$ discretization points; 
	note the second order convergence rate in the $L^1$ norm.
	Center: likewise for Burgers viscous equation, using $4 \leq N_\tau \leq 200$ timesteps and $N_h = 5 N_\tau$ discretization points; note the second order convergence in the $L^\infty$ norm.
	Right: numerical solution of the QPME with an irregular initial condition,  obtained with $N_h = 128$ discretization points, and very few $N_\tau=10$ timesteps, see the discussion of large timesteps in \cref{subsec:numQPME}. Note that a Barenblatt-like profile is obtained at the final time, as can be expected since it is an asymptotic attractor \mbox{%DIFAUXCMD
\cite{vazquez2007porous}}\hskip0pt%DIFAUXCMD .
	}
	\label{fig:cvPorous}
\end{figure}

\paragraph{Accuracy validation.} %Isotropic and one dimensional.}
In order to assess the second order accuracy of the proposed numerical method, we reproduce the known Barenblatt solution (\ref{eqdef:Barenblatt}, left), with parameter $\gamma=1$ and from the time $T_0=10^{-4}$ to $T=10^{-3}$, whose spatial support is contained in $[-1/2,1/2]$.
% $T=2 \times 10^{-3}$
We numerically solve a sequence of small problem instances, where the number $N_\tau$ of timesteps ranges from $4$ to $200$, and the number of spatial discretization points is chosen as $N_h = 5 N_\tau$; the energy functional of interest \eqref{eqdef:porousEnergy} is minimized to machine precision using a damped Newton method ($60$ to $270$ iterations). 
In contrast with our convergence result \cref{th:cvPorous}, the problem solution is \emph{non-smooth}, \emph{non-positive} since it has compact support, and we evaluate the numerical error on the \emph{primal variable} $u$ rather than the dual potential $\phi$. 
We nevertheless observe a second order convergence rate in the $L^1$ norm, which is consistent with the scheme order; 
convergence in the $L^\infty$ norm is slower, as expected in view of the non-smoothness of the solution, see \cref{fig:cvPorous}.
The exact density $\rho(t,x)$ features two discontinuities w.r.t.\ $x \in [0,\infty[$ for each $t \in [T_0,T[$: it is constant on $[0,R(t)[$, then affine on $]R(t),R_T[$, then again constant on $]R_T,\infty[$, with jumps at the radii $R(t)$ and $R_T$, whose closed form expression is obtained in \cref{sec:Barenblatt}. Convergence for the variable $\rho$ is slow as can be expected, and we see some spurious numerical oscillations near the first discontinuity, see \cref{fig:exactSol}, but this does not affect the ratio $u = m/\rho$ whose support ends at $R(t)$. 

\paragraph{Large time steps.}
The solution of the QPME, with any compactly supported initial data $u_0\in C^0_c(\bR)$,  converges (when suitably rescaled) as $t \to \infty$ to the Barenblatt profile \cite{vazquez2007porous}.
%\todo{Correct reference ?}
We illustrate this phenomenon on \cref{fig:cvPorous} (right), where the initial data array $u_0 : \bT_h \to \bR$ vanishes except for three arbitrary positive values ($\cdots,3/10,1,0,0,0,7/10,\cdots$) close to the domain center. Setting the final time to $T=1$, using only $N_\tau = 10$ timesteps and (arbitrarily) $N_h = 128$ discretization points, the final time solution resembles as expected the bell-shaped Barenblatt profile.

For comparison, the explicit scheme (\ref{eq:arithScheme}, $\theta=1$) is subject to the CFL condition  $\max(u) \tau \leq h^2$ (equivalently $N_\tau \geq 2 \max(u) N_h^2$), and thus diverges unless $N_\tau \gtrsim 33000$ timesteps are used in this example.
%hence requires $N_\tau \gtrsim 33000$ timesteps in this example. 
The semi-implicit scheme (\ref{eq:arithScheme}, $\theta=1/2$), which can be solved one timestep at a time but also corresponds to the choice of the arithmetic rather than harmonic mean of the density in the discretized energy functional, compare \eqref{eqdef:porousEnergy} with \eqref{eq:arithEnergy} in \mbox{%DIFAUXCMD
\cref{sec:arith}}\hskip0pt%DIFAUXCMD
 , diverges unless $N_\tau \geq 5000$ timesteps are used in this example (it seems that larger timesteps can be used with smooth initial conditions). Finally, the implicit scheme (\ref{eq:arithScheme}, $\theta=0$) is unconditionally stable, hence $N_\tau=10$ timesteps can be used as well, but it is only first order accurate w.r.t.\ time.

\begin{figure}
\def\h{3.6cm}
	\includegraphics[height=\h]{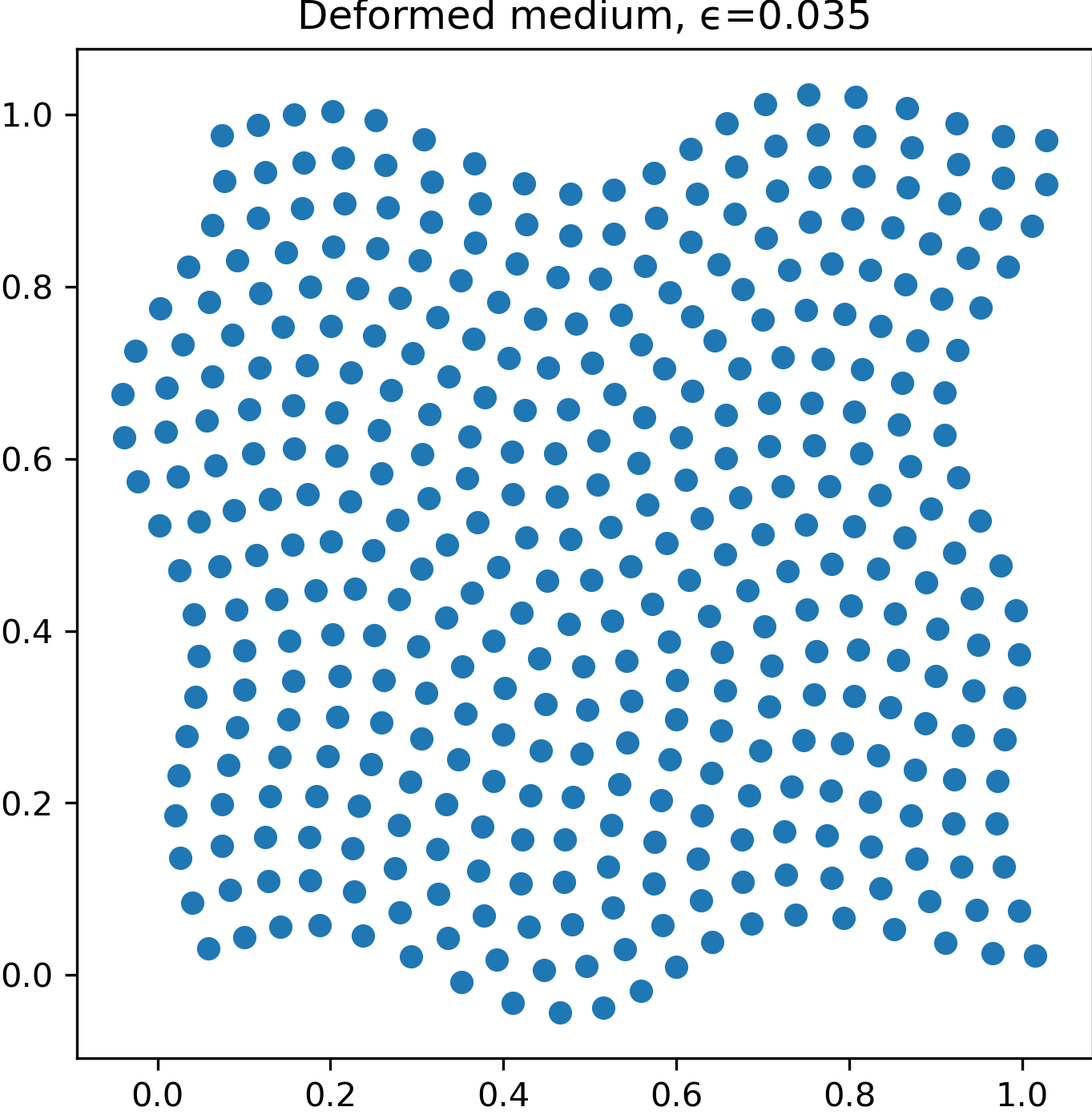}
	\includegraphics[height=\h]{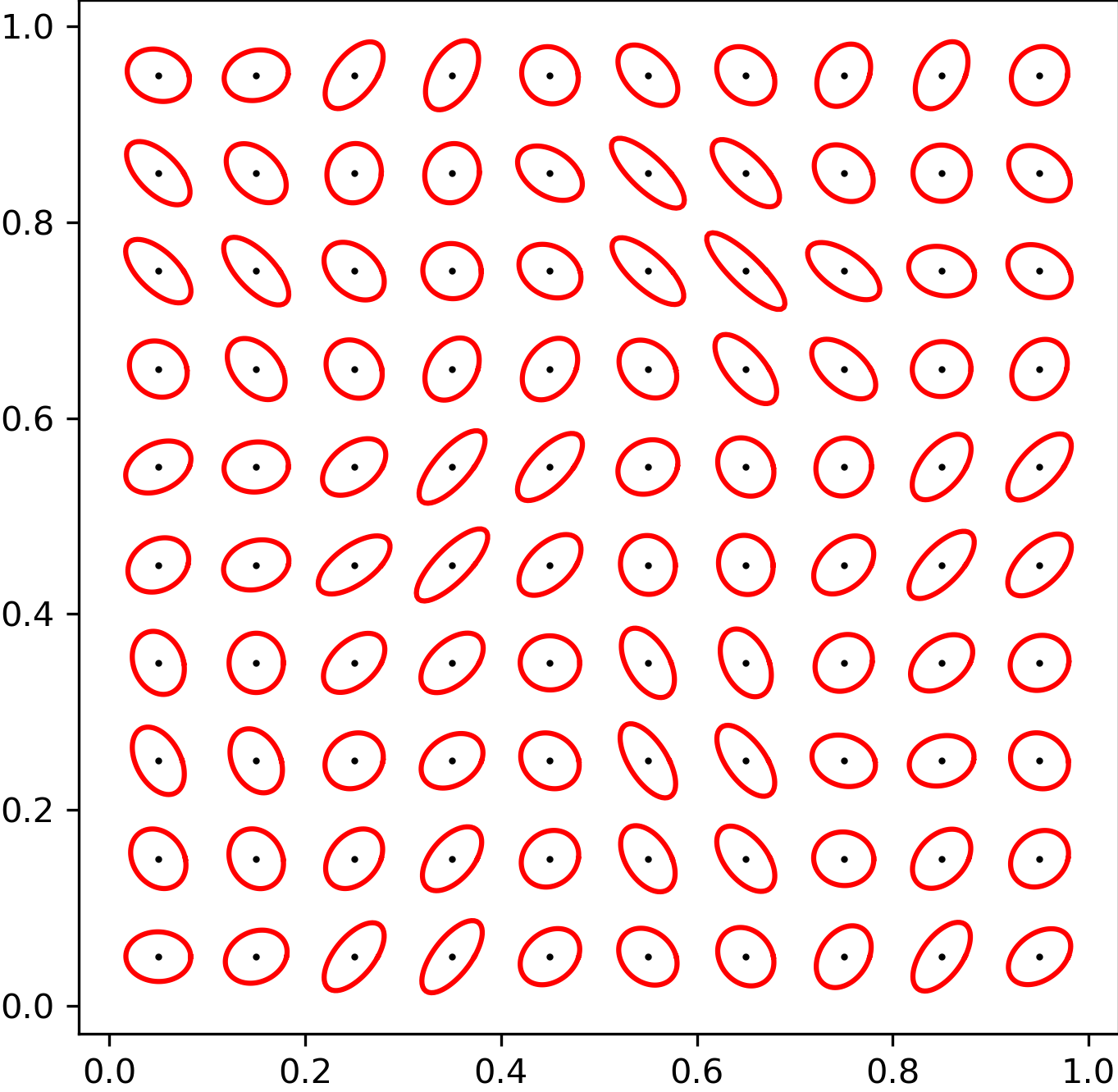}
	\includegraphics[height=\h]{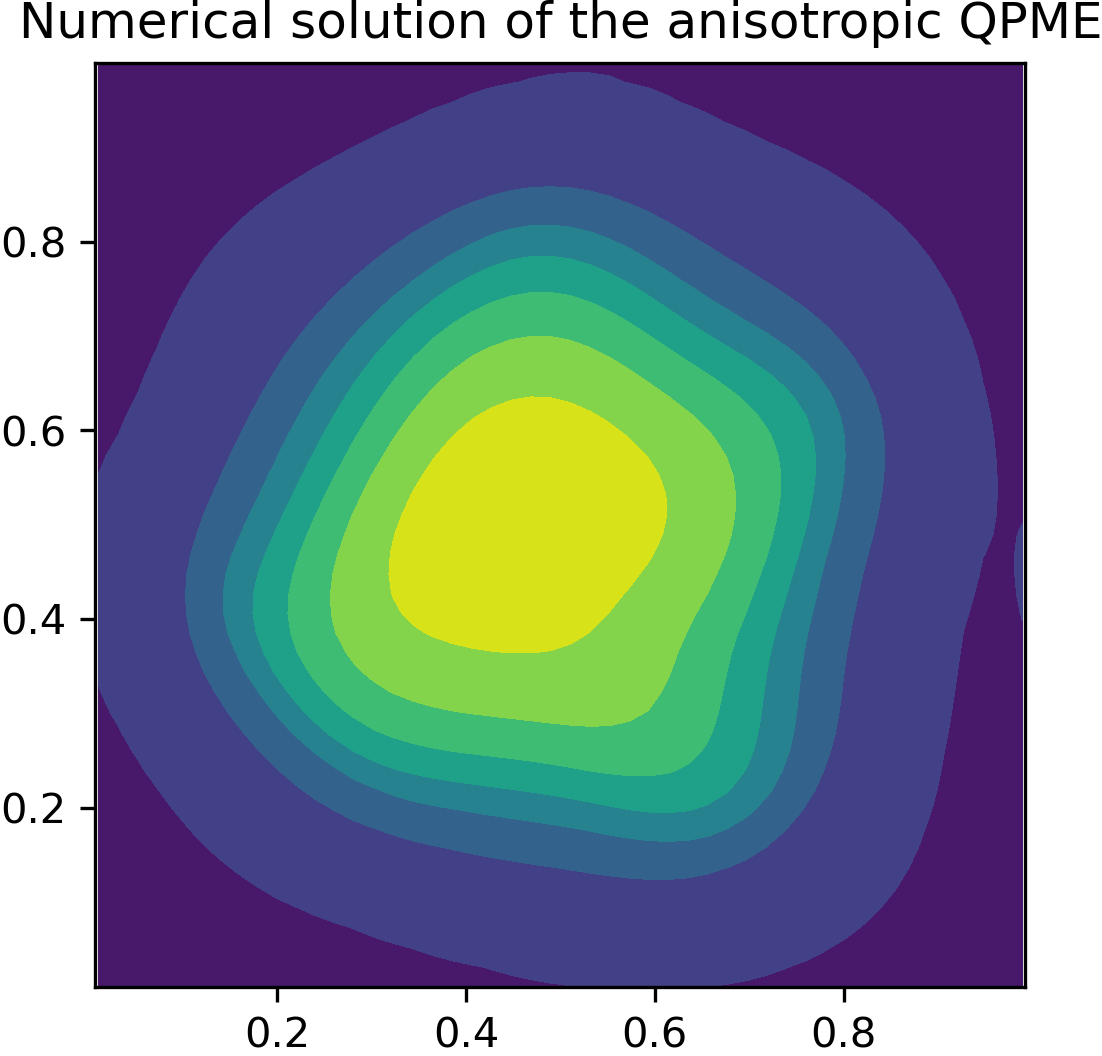}
	\includegraphics[height=\h]{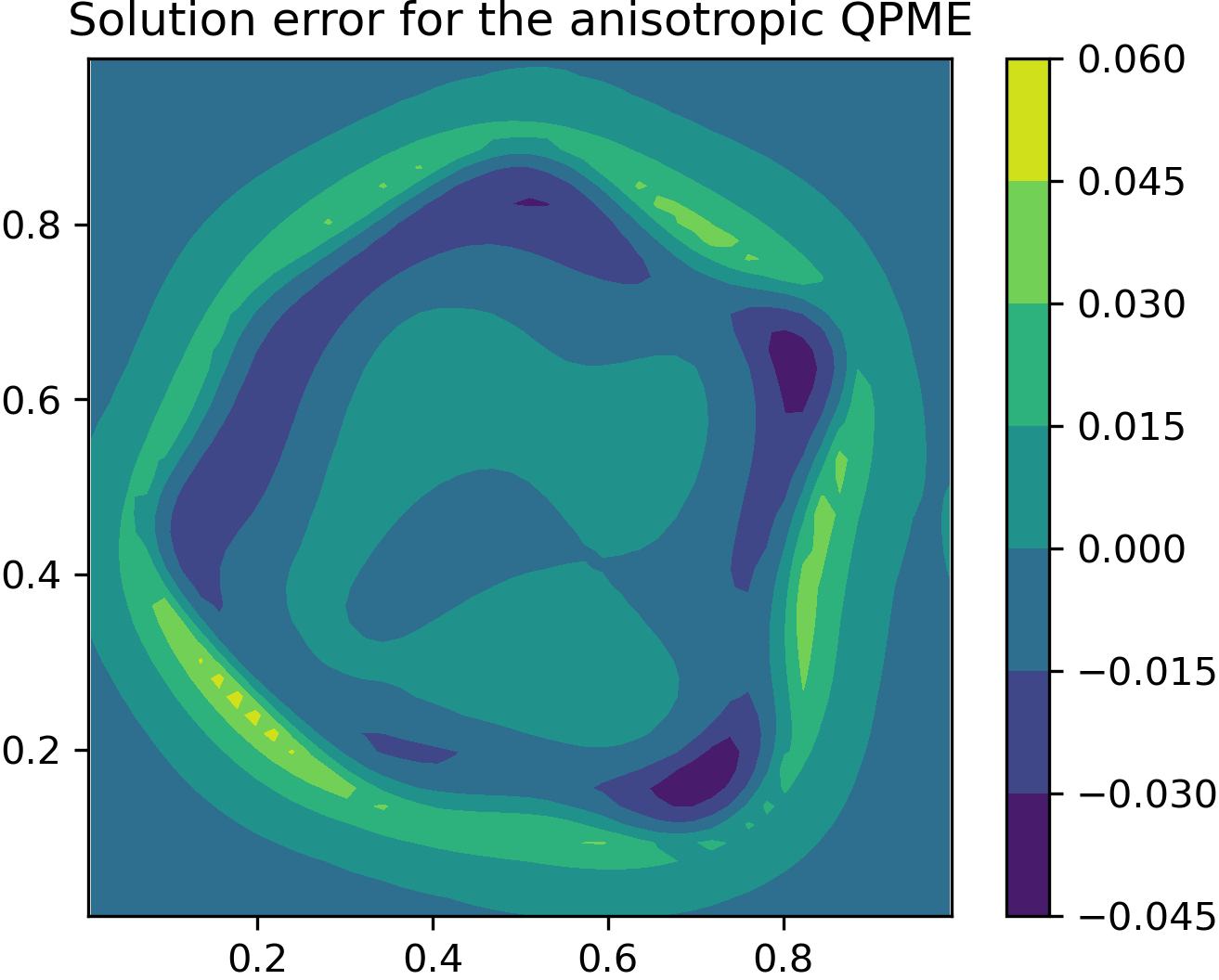}
	\caption{
	I: Image of the Cartesian grid by the chosen diffeomorphism $\phi$. 
	II: Visualization of the anisotropy via the ellipses $\cE(x) := \{v \in \bR^2 \mid \|\Diff \phi(x) v\| = r\}$, for some $r>0$. 
	III: Numerical solution of the anisotropic QPME, on a small grid with $N_\tau=12$ and $N_h=48$.
	IV: Difference between the numerical and the exact solution, which is obtained as the composition of the Barenblatt profile $u^\Porous$ with the diffeomorphism $\phi$.
	%Tissot visualization of the diffusion tensor field
	}
	\label{fig:QPME_Aniso}
\end{figure}

\paragraph{Two-dimensional anisotropic.}
Let $\psi \in C^2(\bR^d,\bR^d)$ be a diffeomorphism whose Jacobian matrix obeys $|\det(\Diff \psi)| = 1$ identically, i.e.\ $\psi$ preserves the Lebesgue measure, and let $u(t,x)$ obey the isotropic QPME. 
Then $(t,x)\mapsto u(t,\psi(x))$ obeys the anisotropic QPME associated with the diffusion tensor field $\cD(x) := (\Diff \psi(x)^\top \Diff \psi(x))^{-1}$, by the usual change of variable formulas for the gradient and divergence.
In this experiment, we choose $u$ as 
%For reproducibility, $u$ is chosen as 
the Barenblatt profile with $\gamma=1$ on the time interval $[10^{-5}, 10^{-4}]$, and we define $\psi(x_0,y_0) = (x_2,y_2)$ via $x_1 = x_0+\epsilon \sin(2\pi y_0+1)$, $y_1 = y_0+\epsilon \sin(4\pi x_1+5)$, $x_2 = x_1+\epsilon \sin(4\pi_1 y+3)$, $y_2=x_2+\epsilon \sin(2\pi x_2+2)$, with $\epsilon = 0.035$. 
By construction, $\psi$ is a $1$-periodic measure preserving diffeomorphism, illustrated on \cref{fig:QPME_Aniso}, with maximum distortion ratio $\approx 3.5$ (ratio of the largest to the smallest singular value of $\Diff \phi$).
Selling's decomposition of this tensor field $\cD$ yields the collection $E := \{(1,0),(0,1),(1,1),(1,-1)\}$  of offsets, and a \emph{piecewise smooth} \cite[Proposition B.5]{bonnans2022randers} weight $\lambda^e$ for each $e\in E$ obeying (\ref{eqdef:Lh}, right), as required for the construction of the proposed scheme \eqref{eq:LhExpl}. 
A modified decomposition introduced in \cite[Theorem 1.8]{bonnans2023monotone} yields \emph{smooth} weights $\tilde \lambda^e$, fitting \cref{assum:porous}, but uses slightly more offsets $e \in \tilde E = E \cup \{(2,1),(2,-1),(1,2),(-1,2)\}$, which increases the memory usage and the numerical cost for no observable benefit in this experiment. %  for this choice of $\cD$
A numerical solution obtained on a small grid $N_\tau = 12$, $N_h = 48$, is shown on \cref{fig:QPME_Aniso}.

\paragraph{Scalability.}
The proposed BBB discretization of the QPME works in arbitrary space dimension $d$, but its numerical cost is of course subject to the curse of dimensionality, especially so since the  numerical method manipulates space-time $(d+1)$-dimensional arrays.
The primal-dual proximal optimization algorithm is embarrassingly parallel in time and space, assuming a suitable implementation of the FFT, and thanks to the Taichi\textsuperscript{\textregistered} library our numerical implementation can be executed on the CPU or the GPU without modification.
In order to investigate the scalability of the proposed method, we ran the above two-dimensional anisotropic QPME experiment on a larger grid: with $N_\tau=64$ and $N_h=128$, thus $N_\tau N_h^2\approx 10^6$ space-time discretization points, computations take $7.9s$ per proximal iteration on the CPU, and $0.069s$ on the GPU, which is a $\approx 110 \times$ performance improvement.
Similarly, the isotropic two-dimensional QPME with $N_\tau=128$ and $N_h = 512$, thus $N_\tau N_h^2 \approx 33\times 10^6$ space-time discretization points, takes $10.5s$ per proximal iteration on the CPU, and $0.085s$ on the GPU, which is a $\approx 120\times$ performance improvement.
Approximately $2000$ proximal iterations are needed to achieve good accuracy.
Laptop equipped with an Intel Core i7, 8 core, 2.3Ghz CPU, and an Nvidia 2060 Max-Q GPU. The chosen dimensions $N_\tau$ and $N_h$ are the largest powers of two such that the computations fit in the GPU 8GB memory. The anisotropic implementation has higher memory usage, for an identical gridsize $N_\tau N_h^2$, due to the additional unknowns \eqref{eq:mene}.

% --------------------------------------------------------------------
\subsection{Burgers' equation}
\label{subsec:numBurgers}

\begin{figure} %DIF >  Generated by /Users/jean-mariemirebeau/Dropbox/Programmes/2024/12_Decembre/BBB_Notebooks_copy/tiBurgers_GPU.ipynb
\def\h{3.25cm}
	 \includegraphics[height=\h]{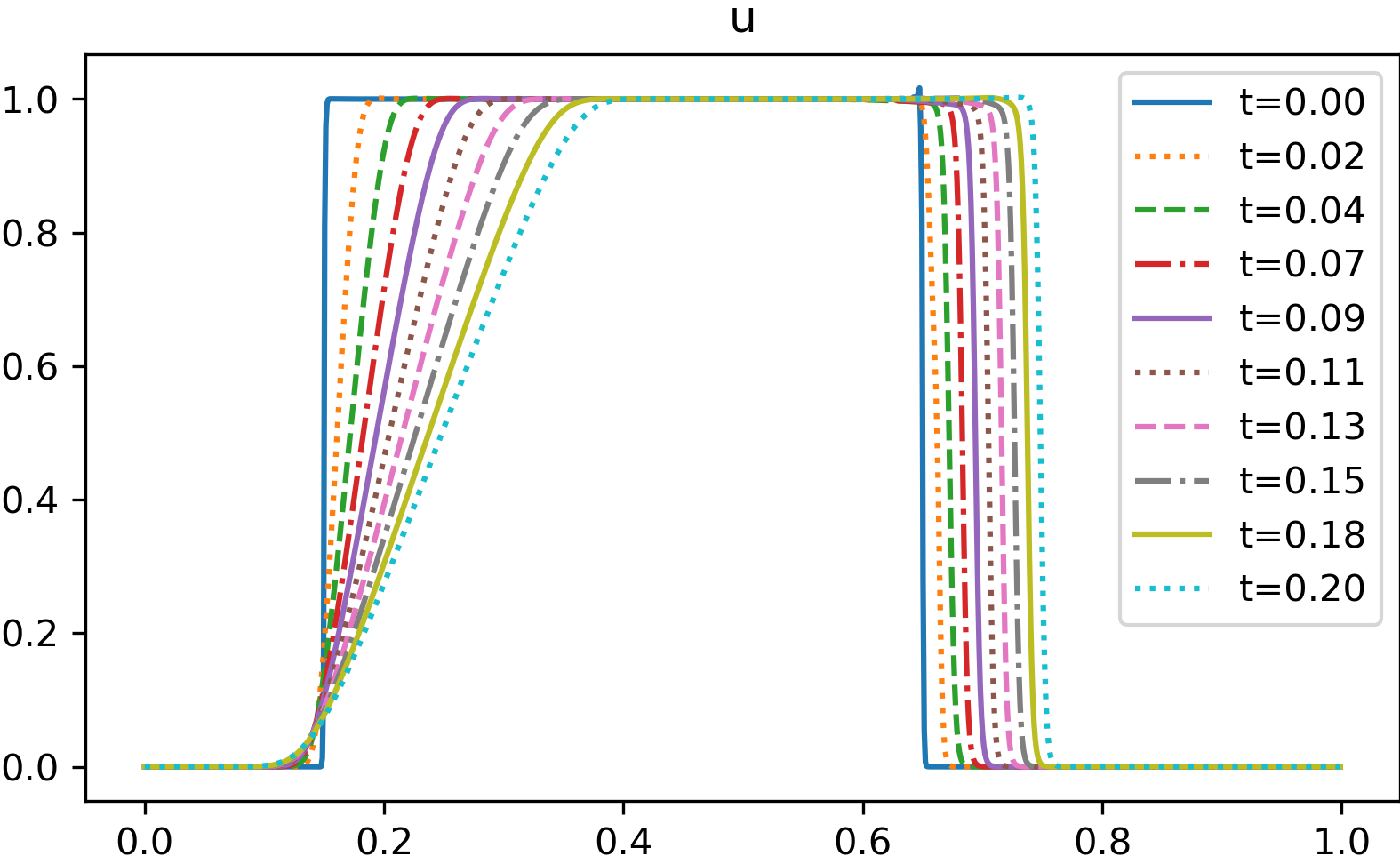}
	\includegraphics[height=\h]{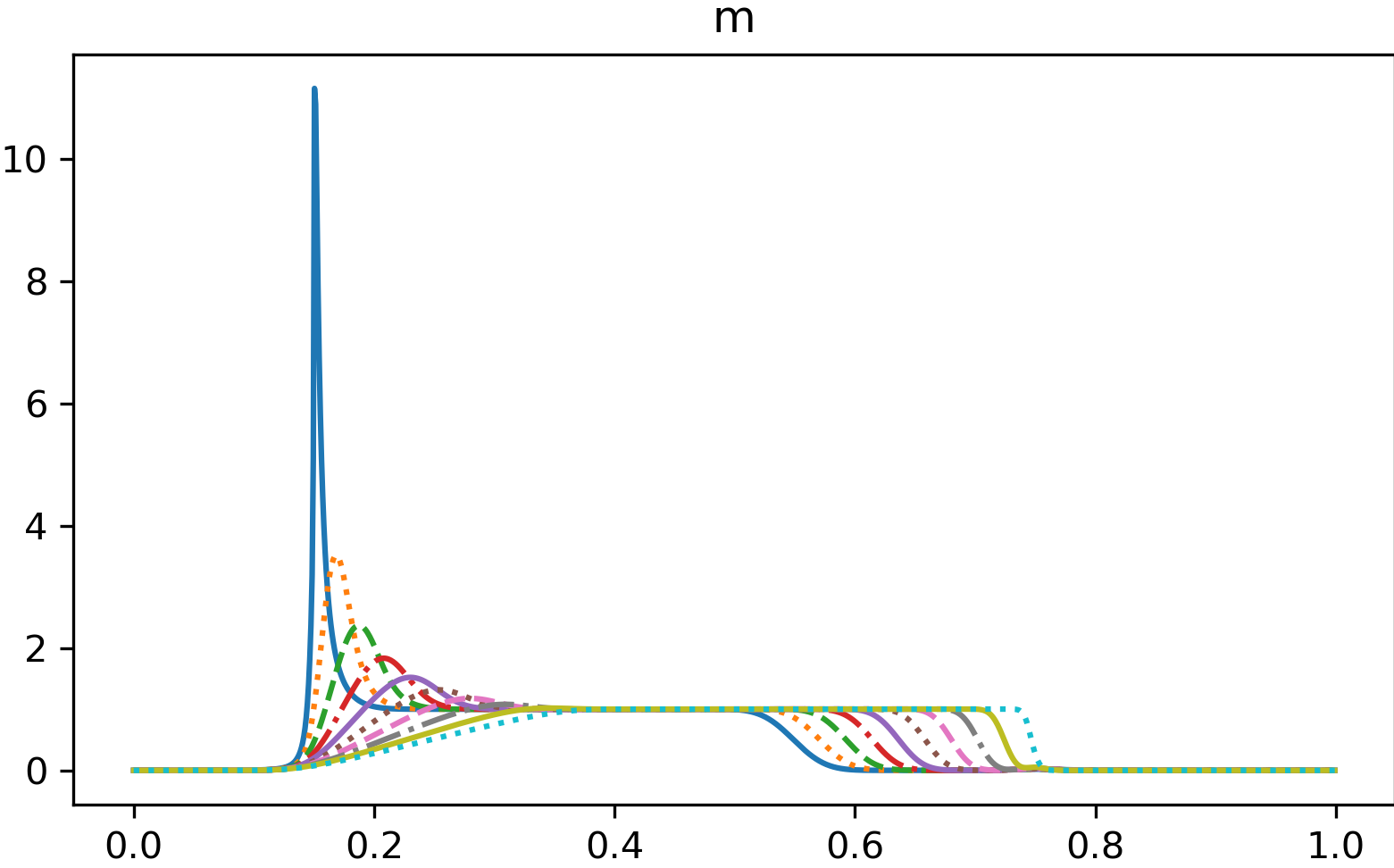}
	\includegraphics[height=\h]{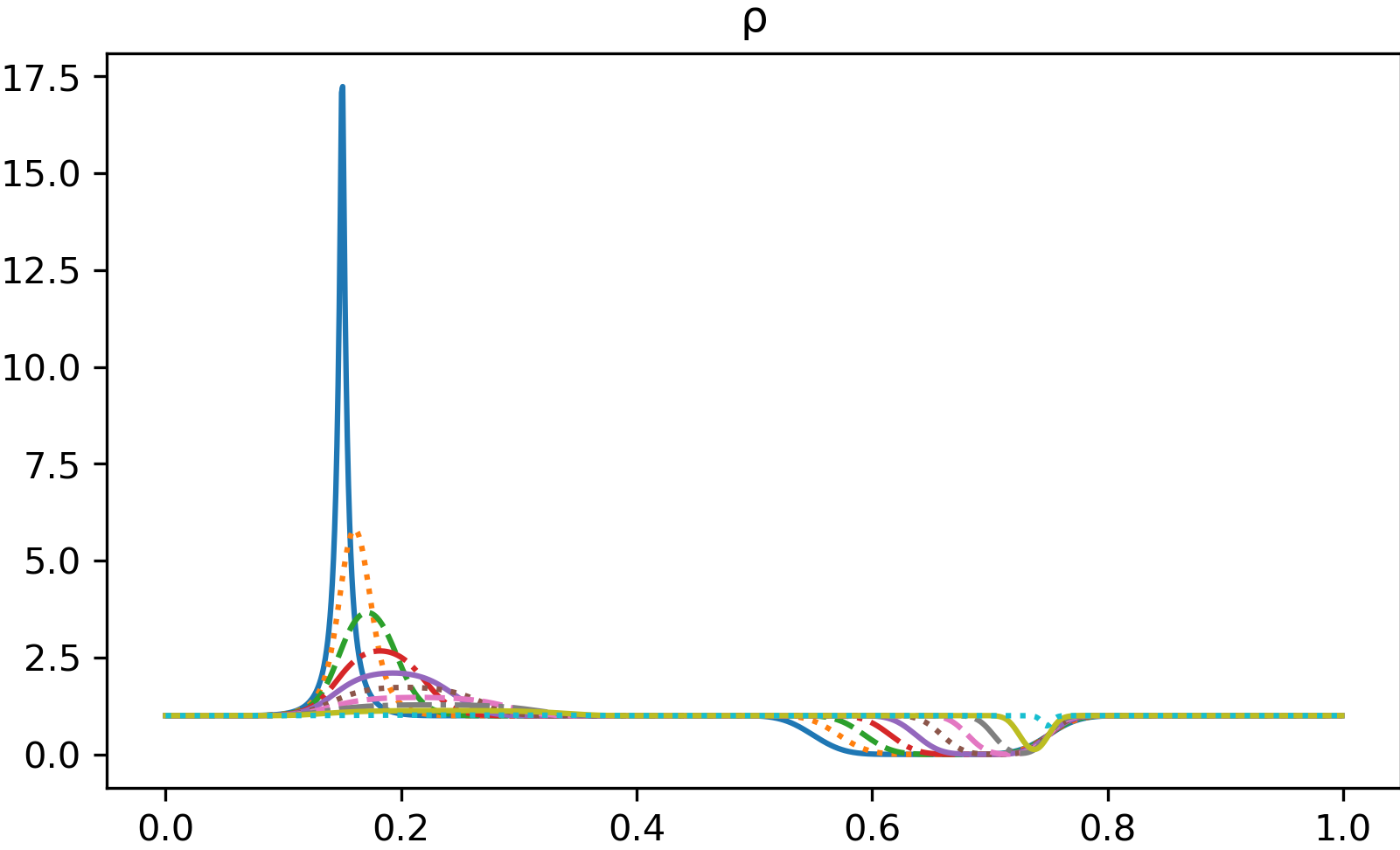} \\ \includegraphics[height=\h]{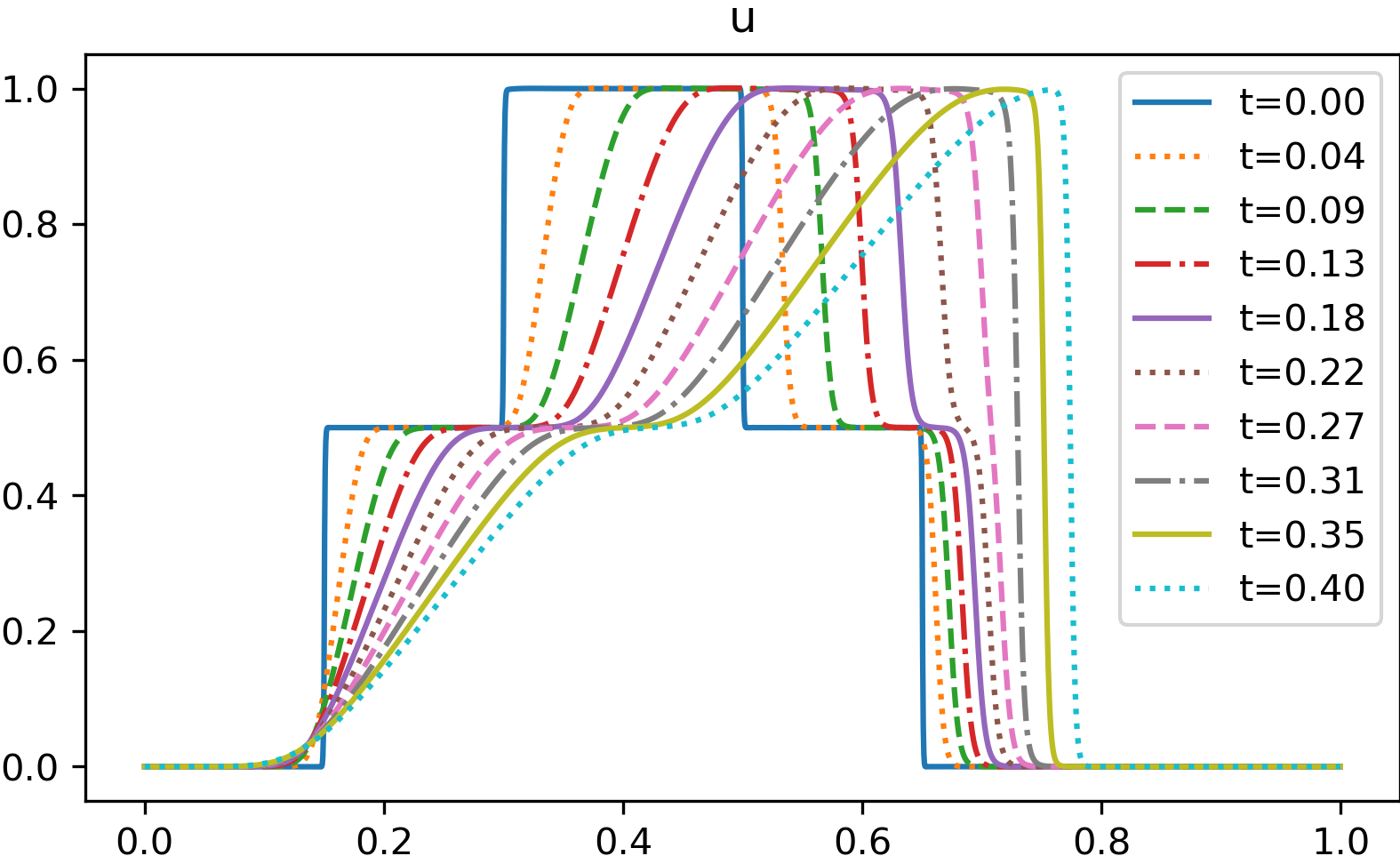}
	\includegraphics[height=\h]{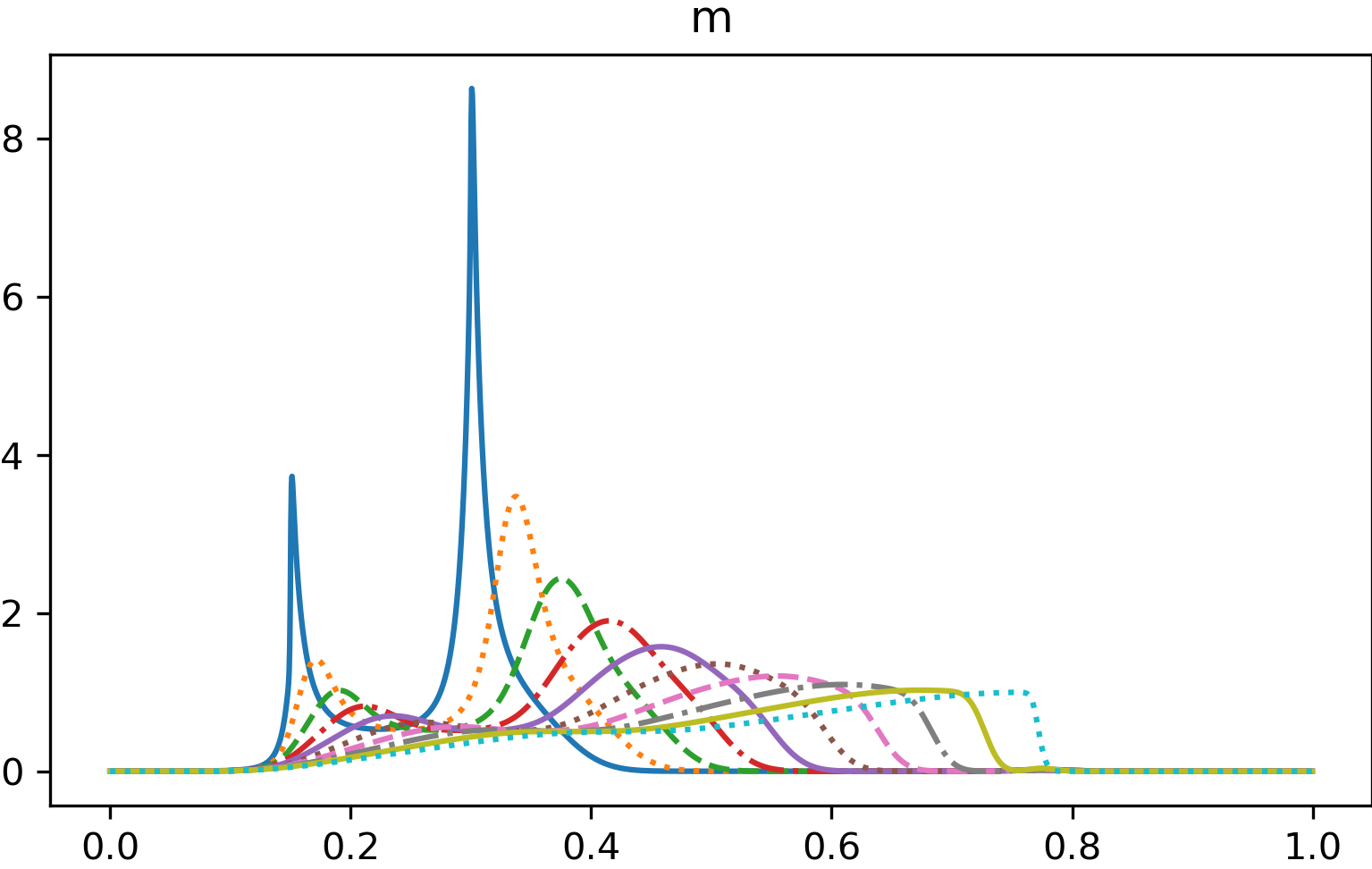}
	\includegraphics[height=\h]{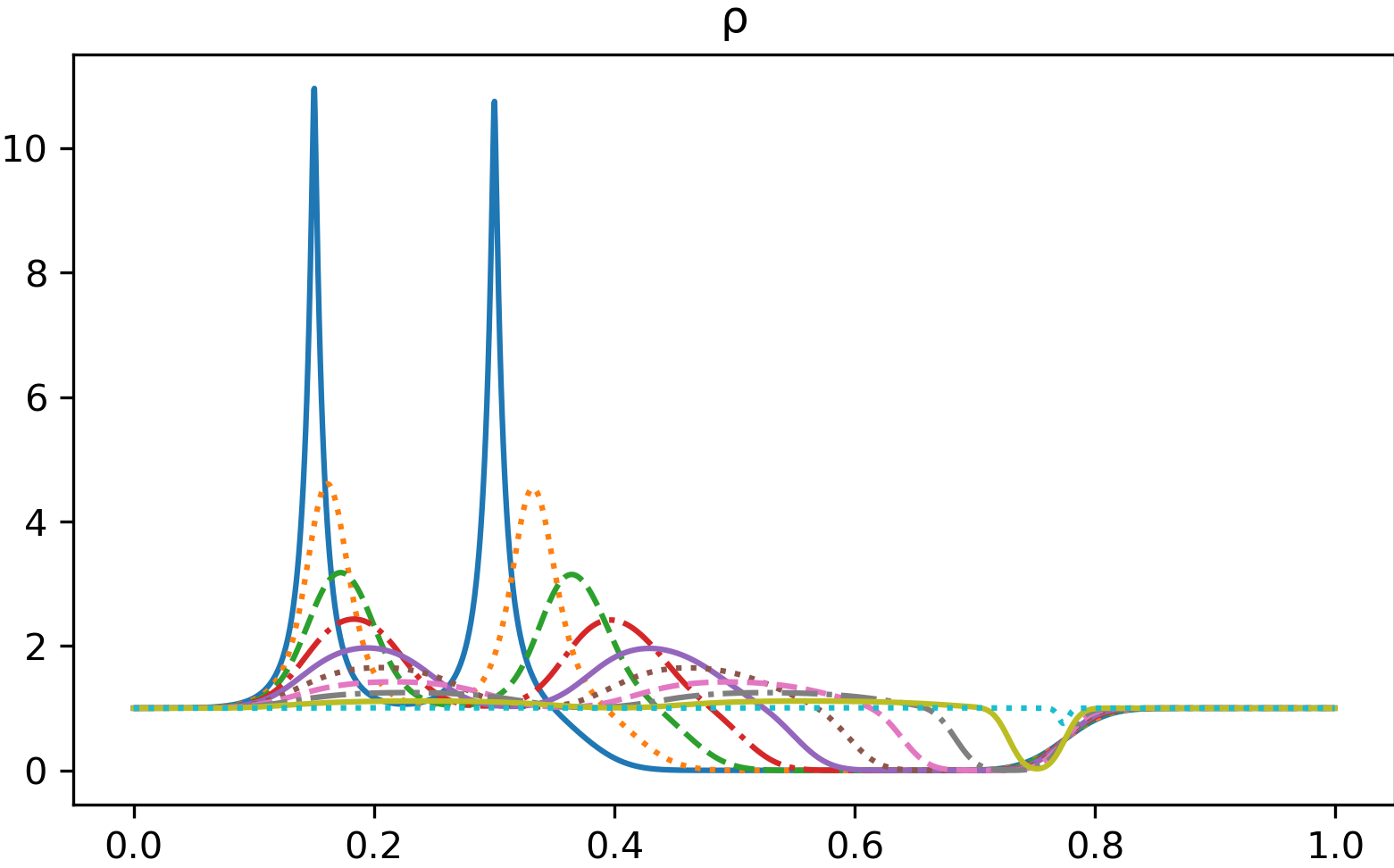}
	\caption{
	Top: Burgers' equation with viscosity $\nu = 10^{-3}$, initial condition $u_0 = \bbone_{0.15\leq x \leq 0.65}$, evolution time $T=0.2$, $N_\tau = 512$, $N_h=1024$, $N_{\prox} = 6000$.
	Bottom: Burgers' equation with viscosity $\nu = 10^{-3}$, initial condition $u_0 = \frac{1}{2} (\bbone_{0.15\leq x \leq 0.65} + \bbone_{0.3\leq x \leq 0.5})$, evolution time $T=0.4$,
	$N_\tau = 512$, $N_h=1024$, $N_{\prox} = 6000$.
	}
	\label{fig:burgersInterval}
\end{figure}

\begin{figure} %DIF >  Generated by /Users/jean-mariemirebeau/Dropbox/Programmes/2024/12_Decembre/BBB_Notebooks_copy/tiBurgers_GPU.ipynb
\def\h{3.25cm}
	 \includegraphics[height=\h]{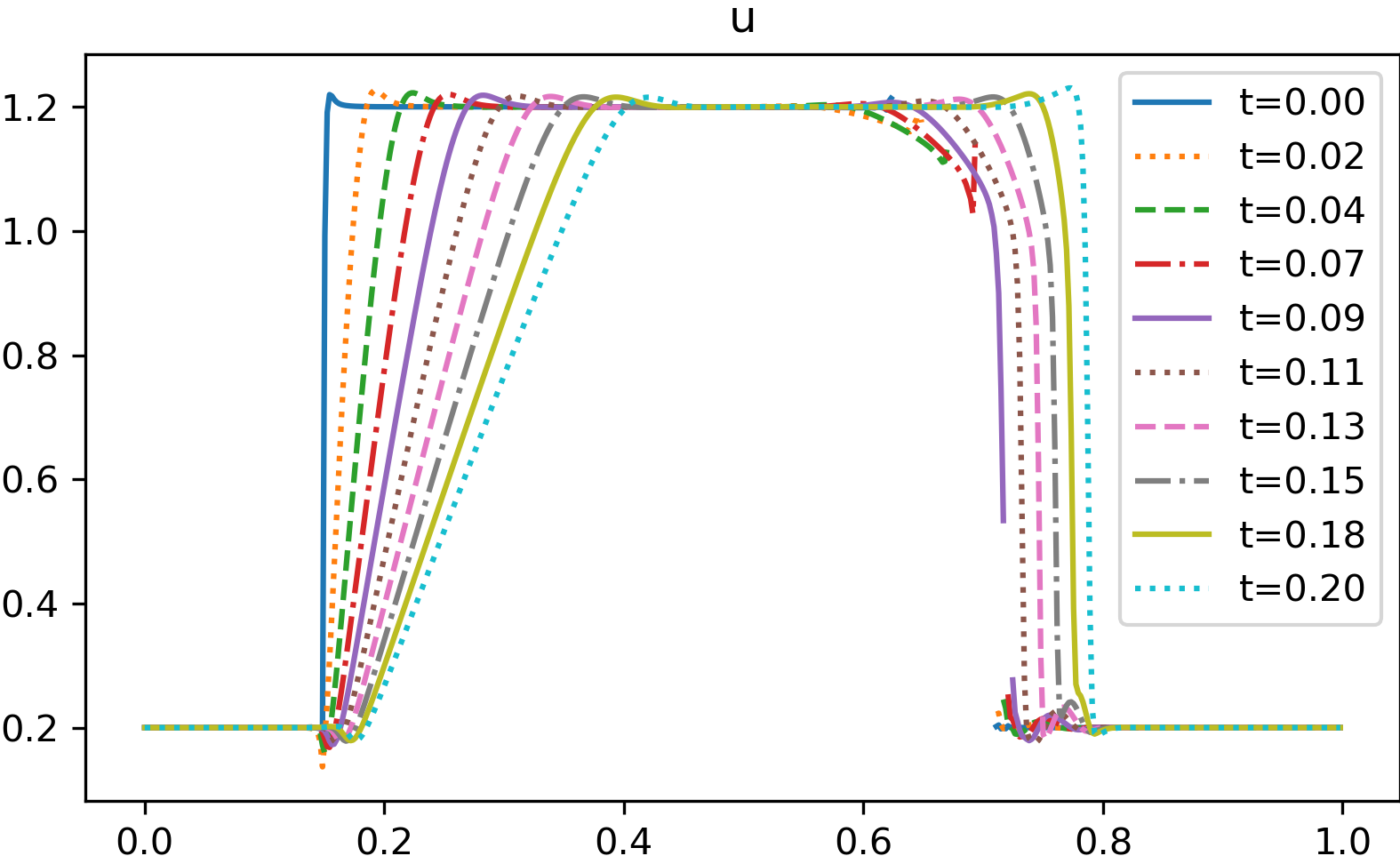}
	\includegraphics[height=\h]{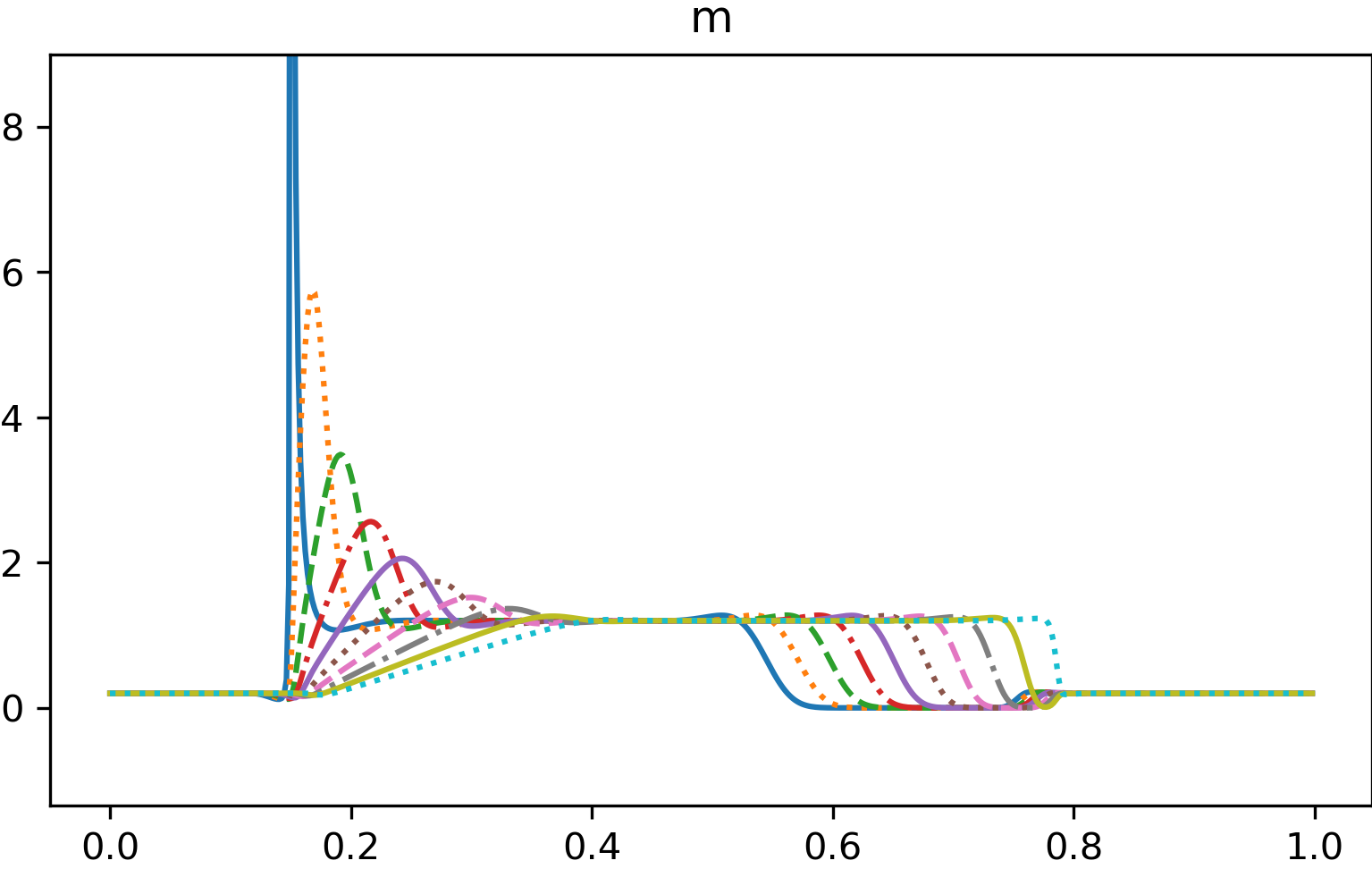}
	\includegraphics[height=\h]{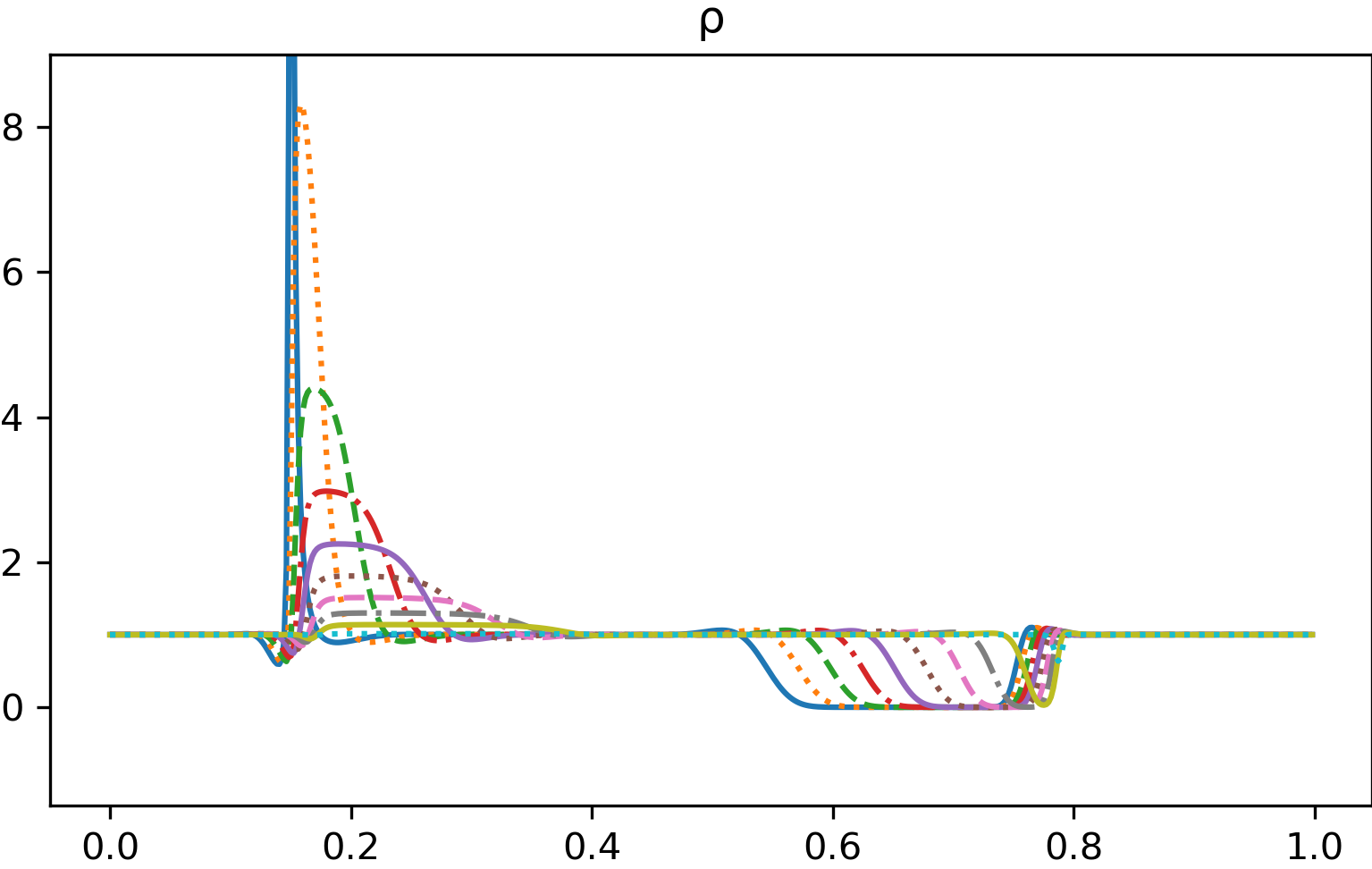}		
	\caption{
	Failed attempt at the numerical solution of Burgers inviscid equation ($\nu=0$), initial condition $u_0 = \bbone_{0.15\leq x \leq 0.65}$, evolution time $T=0.2$, $N_\tau = 256$, $N_h=512$, $N_{\prox} = 40000$.
	The numerical solution is reasonably good at the final time $t=0.2$, but is incorrect, or non-converging, at earlier times. In particular, the initial condition ($t=0.0$, dark blue) is not reproduced.
	}
	\label{fig:inviscid}
\end{figure}

\begin{figure} %DIF >  Generated by /Users/jean-mariemirebeau/Dropbox/Programmes/2024/12_Decembre/BBB_Notebooks_copy/tiBurgers.ipynb
\def\h{3.7cm}
 \includegraphics[height=\h]{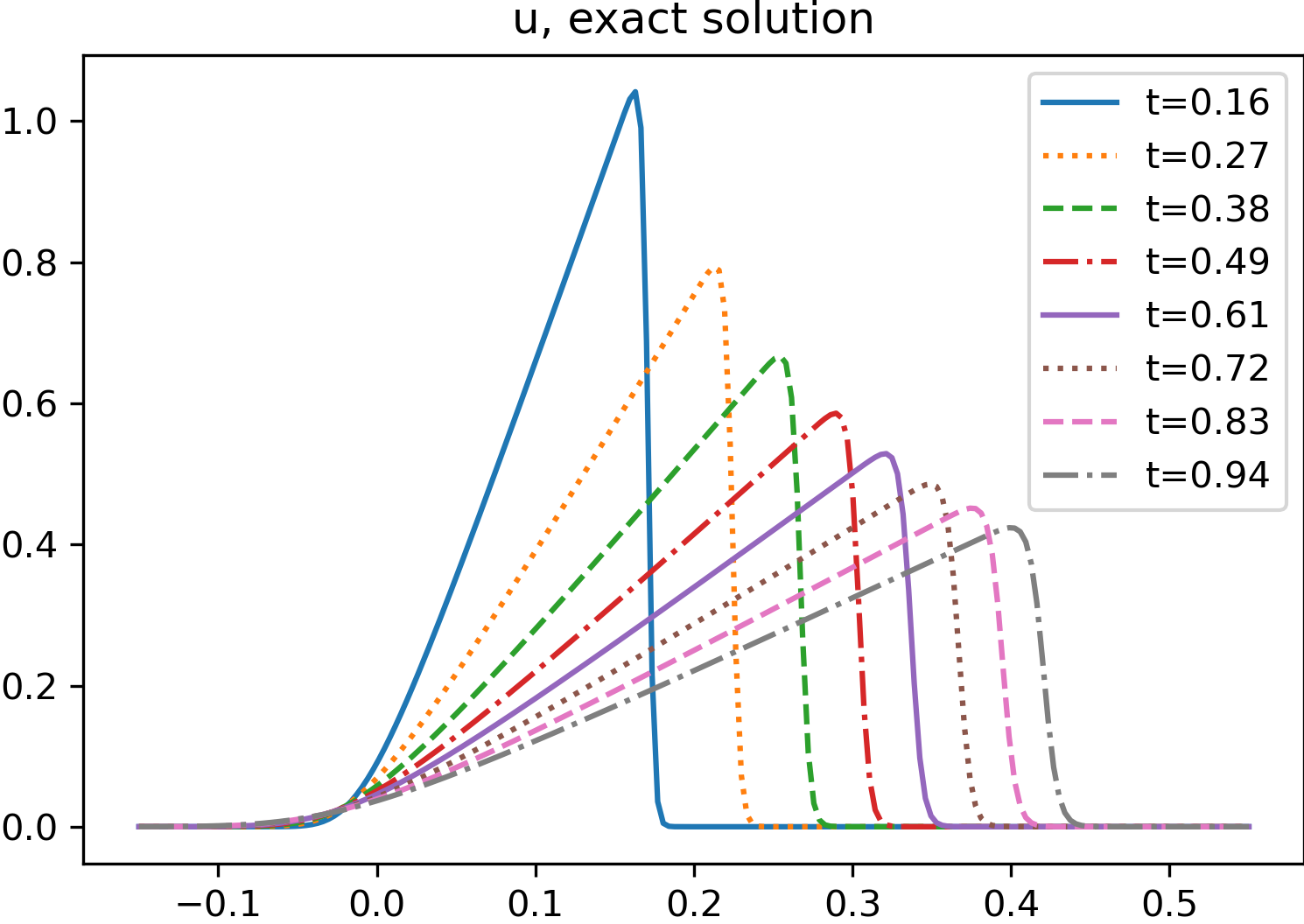}
\includegraphics[height=\h]{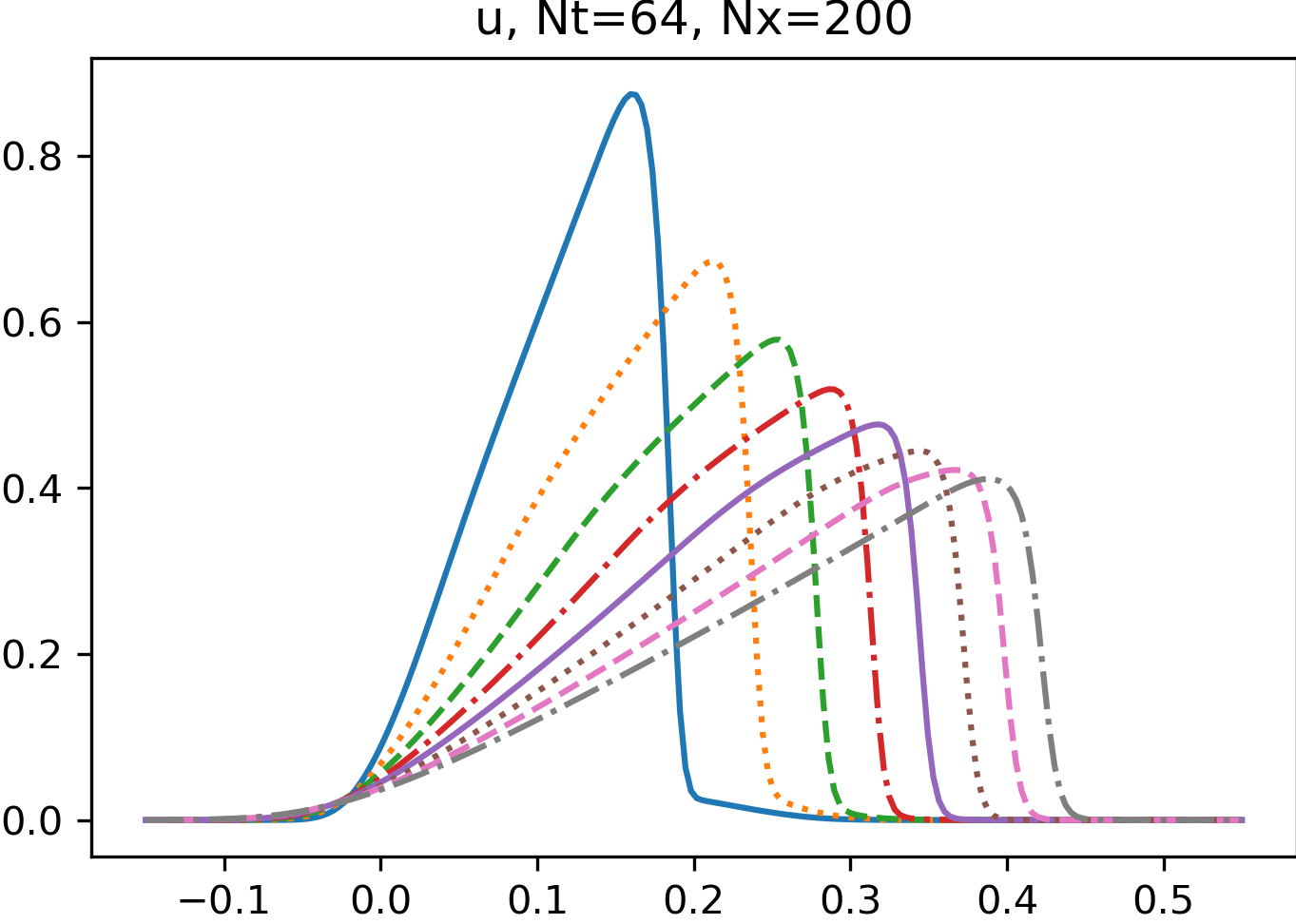}
\includegraphics[height=\h]{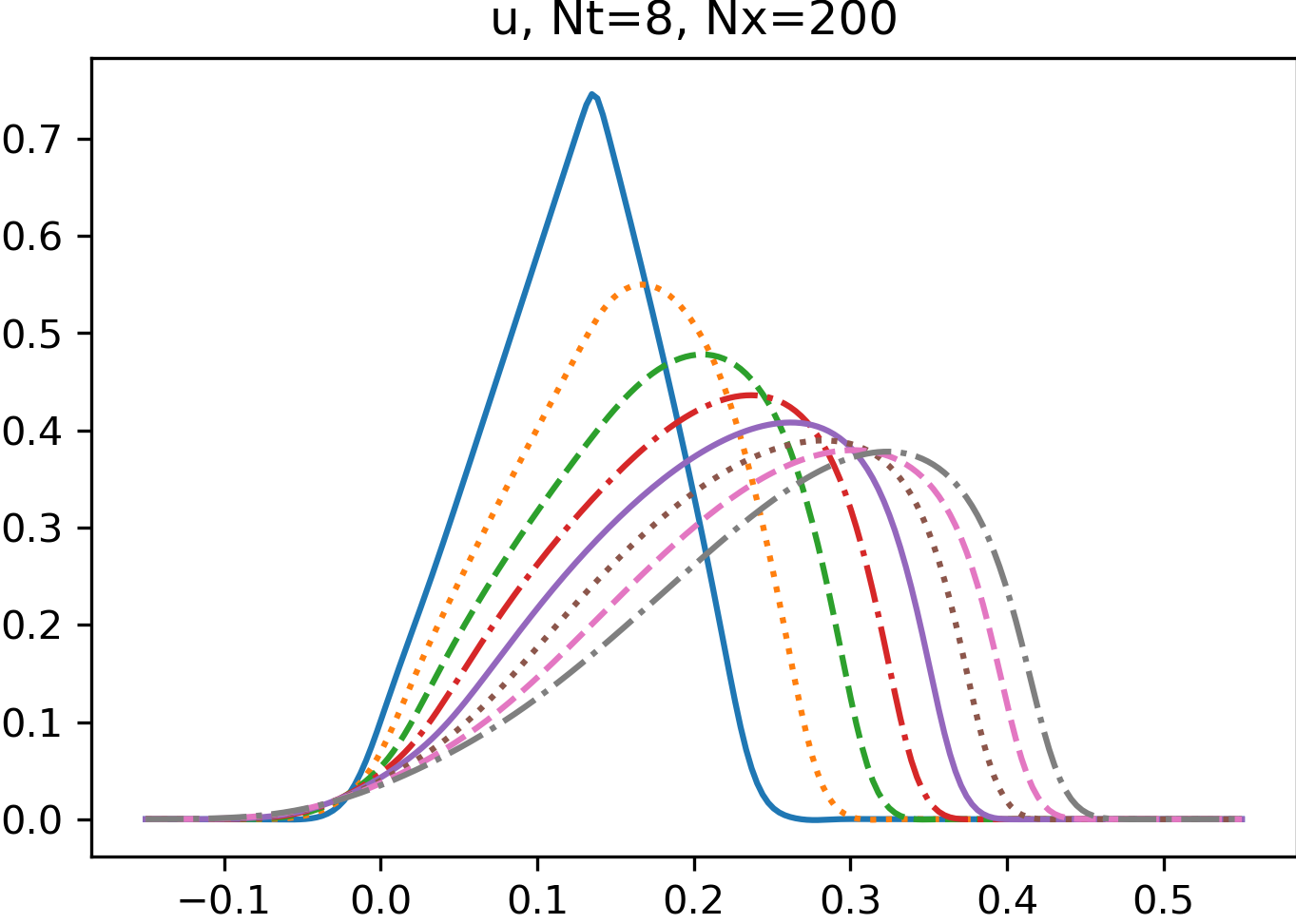}
\caption{
Left: exact solution of Burgers' equation on the time interval $[0.1,1]$ with $\nu=10^{-3}$ and $\delta = \exp(\Re)-1$ where $\Re=50$ is the Reynolds number. 
Center: numerical solution obtained with $N_\tau = 64$ timesteps and $N_h = 200$ discretization points.
Right: numerical solution obtained with only $N_\tau=8$ timesteps, and $N_h=200$, see the discussion on large timesteps in \cref{subsec:numBurgers}.
}
\label{fig:large_tau_Burgers}
\end{figure}

\paragraph{Large viscosity.}
In order to assess the accuracy of the proposed numerical method, we reproduce the known solution (\ref{eqdef:Barenblatt}, right), with viscosity $\nu=10^{-2}$ and parameter $\delta = \exp(\Re)-1$ where $\Re=5$ is regarded as a Reynolds number, from the time $T_0=0.1$ to $T=1$, and on the interval $[-0.6,0.9]$ outside of which the solution vanishes up to machine precision. We solve a sequence of small problem instances, where the number of timesteps ranges from $4$ to $400$ and the number of spatial discretization points is chosen as $N_h=5N_\tau$, and we minimize the energy functional of interest to machine precision using a Newton method, which only requires $7$ iterations here independently of the problem size.
In contrast with the assumptions of \cref{th:cvBurgers}, the problem solution is not really positively bounded below since the quadratic exponential $\exp(\frac{-x^2}{4 \nu t})$ in \eqref{eqdef:Barenblatt} falls below machine precision on the domain boundary, and in addition error is measured on the primal variable $u$ rather than the dual potential $\phi$. Nevertheless, we observe a second order convergence rate in the $L^\infty$ norm, see \cref{fig:cvPorous}, which could be expected since the solution is smooth.

\paragraph{Small viscosity.}
We illustrate on \cref{fig:burgersInterval} the numerical solution obtained with a small viscosity $\nu = 10^{-3}$, and when the initial condition is the characteristic function of a compact interval, or the average of two such characteristic functions. This creates rarefaction waves on the left, and shocks on the right, which evolve and merge as expected.
From the theoretical standpoint, the density $\rho^T$ is positively bounded below for any  final evolution time $T$ in the BBB formulation, see \cref{remark:finalT}. Let us acknowledge however that if $T$ is chosen too large, then the values of $\rho$ fall below machine precision ($32$bit floats are used for these GPU experiments), which raises numerical convergence issues, not illustrated here, see the discussion at the end of \cref{sec:nogapBV}.\\

Depending on one's perspective, the last experiments may be regarded as negative results.
\vspace{-3mm}
\paragraph{Non-viscous.}
%DIF < Elaborating on the discussion of the last paragraph, recall that t
The solution $u^T$  of the BBB formulation of Burgers' equation on the interval $[0,T]$, differs in general from the viscosity solution $u$ if $T>T_{shock}$, except at the \emph{final} time: $u^T(T,\cdot) = u(T,\cdot)$. 
More precisely, \cite[Theorem 5.5.2]{brenier2020examples} characterizes $u^T$ as the unique shock free solution of Burgers' equation on $[0,T]$ such that $u^T(T,\cdot) = u(T,\cdot)$.
In our experiment (using the proximal solver on the GPU), the numerically computed approximation of $u^T$ does match the viscosity solution at the final time $T$, but typically fails to converge for smaller values $t\in [0,T[$, likely because the density $\rho$ falls below machine precision, see \cref{fig:inviscid}. 

\def\visc{\mathrm{ent}}
\def\free{\mathrm{free}}

The failure of the proposed numerical method to produce $u^T(t,x)$ for $t \in [0,T[$ in the inviscid case is unfortunate, but let us nevertheless acknowledge that this value is mostly of theoretical interest, since the shock free solution $u^T$ differs in general from the entropic solution $u(t,x) \neq u^T(t,x)$, except at the final time $t=T$ for which the numerics do work. 
An interesting research perspective would be to understand the behavior of the discretized BBB solution $u^T_{h,\nu}$ with gridscale $h$ and viscosity $\nu$, as these two parameters tend simultaneously to zero with different asymptotic behaviors (and e.g.\ timestep $\tau=h$ for simplicity): are there viscosity scaling laws $\nu_\visc(h)$ and $\nu_\free(h)$, vanishing in the limit $h \to 0$, and such that $u^T_{h,\nu_\visc(h)}$ converges to the entropic solution $u$, and $u^T_{h,\nu_\free(h)}$ converges to the shock free solution $u^T$? If so, what is the convergence rate ?

 A strategy around the convergence issue is proposed in \cite{kouskiya2024inviscid}. In that work, the BBB formulation is successively discretized on a collection of small time sub-intervals, with some overlapping, rather than a single large interval. In addition, crucially, a well chosen \emph{base state} is introduced in the energy functional, see \cref{remark:finalT}. It may either be generated numerically, or defined a-priori e.g.\ as the solution of Burgers' equation with a small but positive viscosity parameter \cite[\S 5.4]{kouskiya2024inviscid}.

\paragraph{Large time steps.}
We consider the exact solution (\ref{eqdef:Barenblatt}, right), with smaller viscosity $\nu=10^{-3}$ and higher Reynolds number $\Re=50$, on the time interval $[0.1,1]$ and the domain $[-0.15,0.55]$, see \cref{fig:large_tau_Burgers}.
This solution is reasonably well reproduced using the proposed BBB formulation with $N_\tau=64$ timesteps, $N_h=200$ discretization points, here solved using the damped Newton method. In order to illustrate the unconditional stability of the scheme, we also solve the BBB formulation with excessively few $N_\tau=8$ timesteps. The numerical solution accuracy is degraded, as could be expected, but interestingly this degradation includes the initial condition which is not well reproduced: the `right angle' is turned into a symmetric `hat', see \cref{fig:large_tau_Burgers} (right). It is not clear to us whether this is related to the phenomenon of the previous paragraph. 

%However, when excessively few time steps $N_\tau=8$, a somewhat unexpected phenomenon arises: the numerical solution does not reproduce the \emph{initial} condition, but replaces it with a ``hat'' shaped alternative initial value. 
%This suggests that the viscosity is incorrectly taken into account due to the excessively few timesteps, and that the curious phenomenon associated with the BBB formulation of the inviscid Burgers' equation \cite[Theorem 5.2.2]{brenier2020examples} takes place - where the terminal solution value is correct (despite significant numerical diffusion in this example) but not the earlier ones, see the discussion below \cref{assum:Burgers}. 

\section{Conclusion}

In this paper, we presented the first numerical convergence analysis of the ballistic Benamou-Brenier (BBB) formulation of non-linear evolution PDEs, applied to the (possibly anisotropic) quadratic porous medium equation (QPME) and the (possibly viscous) Burgers' equation. 
The obtained schemes are second order accurate w.r.t.\ the timestep and the gridscale, and are free of any Courant-Friedrichs-Levy condition relating these parameters. They take the form of an optimization problem, whose structure is reminiscent of a mean field game, and which can be solved using a proximal primal-dual method and space-time fast Fourier transforms.

Numerical experiments with the QPME show that the approach efficiently handles large timesteps, and strong anisotropies, while retaining second order accuracy and being easily parallelizable. Limitations come from memory usage, due to the use of space-time global solution arrays, and from the slow convergence of the proximal optimizer. 
The approach also applies to Burgers' equation with a (possibly small) positive viscosity, but suffers from delicate theoretical obstructions in the inviscid case \cite{brenier2018initial}. 
%These is also a theoretical obstruction regarding Burgers' inviscid equation \cite{brenier2018initial}. 
%, notwithstanding the interesting numerical study \cite{kouskiya2024inviscid}.
%Regarding Burgers' equation, the BBB formulation in the continuous setting suffers from a subtle obstruction, in the sense that the solution value is only correct at the \emph{final} time - if it comes after the shock time. For this reason, the corresponding numerical approach should be regarded above all as a curiosity; in addition it suffers from significant numerical diffusion.

Numerous perspectives are open for future work. 
The PDE models may be generalized, by introducing a right hand side or non-periodic boundary conditions. Alternative discretizations may be considered, using e.g.\ triangulations instead of a Cartesian grid. The convergence analysis which is currently limited to smooth and uniformly positive solutions might be extended so as to accommodate e.g.\ the Barenblatt profile. The primal-dual optimization method, which currently limits the numerical efficiency of our approach, might be improved using e.g.\ the uniform convexity properties underlying our convergence analysis. 
The introduction of a weight in the energy functional, decaying exponentially with time following \cite{vorotnikov2025hidden}, should be investigated and the numerical method suitably adapted. 
Finally, and most importantly, the numerical BBB approach will be investigated in the case of systems of PDEs, related to e.g.\ fluid mechanics, which are a fundamental motivation of this theoretical framework \cite{brenier2020examples,vorotnikov2022partial}.

\paragraph{Acknowledgement.}
The authors would like to thank Yann Brenier and Thomas Gallouët for numerous constructive discussions. The authors also thank the anonymous referees for their constructive comments which helped at lot in improving the clarity and content of the manuscript.

\bibliographystyle{alpha}
\bibliography{Papers}
\appendix
\section{Burgers' equation}
\label{sec:Burgers}

This section is devoted to the proof \cref{th:cvBurgers}, our convergence result for the discretization of Burgers' equation. Incidentally, it shows that the proof template of \cref{sec:cv} for the QPME can accommodate variations, in this case (i) a non-linearity associated with a first order operator (rather than second order), (ii) an optional viscous linear term, and (iii) a staggered grid discretization in space, in addition to time. 

%\todoi{Might be possible to expand to $\partial_t u + \frac 1 2 b\cdot \nabla u^2  = \nu \Delta u$}

\begin{notation}
	We fix in the following the half timestep $\tau>0$ and the half gridscale $h>0$. 
	For readability and in order to avoid double sub- or super-scripts, we denote by $\cE := \cE_{\tau h}^\Burgers$ the energy of interest \eqref{eqdef:BurgersEnergy} and by $\Phi := \Phi_{\tau h}^\Burgers$ its domain, see \cref{lem:domBurgers}. Likewise we denote by $\cT:=\cT_\tau$, $\cT':=\cT'_\tau$, $\cX := \bT_h$ and $\cX' := \bT_h'$ the centered and staggered grids, in time and space, see \eqref{eqdef:DtDhe}. Finally, we denote 
	 $\|\cdot\|_1 := \|\cdot\|_{\ell^1(\cX)}$ , see \eqref{eqdef:perspective}, and $\mathring \cT := \cT\sm \{0,T\} = \{2\tau,\cdots,T-2\tau\}$.
\end{notation}

Using the exact expansion of the perspective function, presented in \cref{prop:perspective}, we  express the energy of interest as $\cE(\phi+\hat \phi) = \cE(\phi) + \cL(\hat \phi; \phi)+\cQ(\hat \phi; \phi)$ for any $\phi,\hat \phi\in \Phi$ such that $\cE(\phi)<\infty$. The linear and second order (non-quadratic) terms are
\begin{align}
\nonumber
	\cL(\hat \phi; \phi) &:= \tau h 
	\sum_{\substack{t \in \cT' \\ x \in \cX}}
	\sum_{\substack{\sigma_x = \pm\\ \sigma_t=\pm}} \Big[
	u^{\sigma_t}_{\sigma_x}(t,x) \big(\hat m(t,x)-\nu \partial_h \hat \rho(t+\sigma_t \tau,x)\big) 
	-\tfrac{1}{2} u^{\sigma_t}_{\sigma_x}(t,x)^2 \hat \rho(t+\sigma_t \tau,x+\sigma_x h) 
	\Big]
	\\[-3ex] 
\nonumber
	& \hspace{10cm}+ 2h \sum_{x \in \cX} u_0(x) \hat \phi(0,x),\\
\label{eqdef:QBurgers}
	\cQ(\hat \phi; \phi) &:= \tau h 
	\sum_{\substack{t \in \cT' \\ x \in \cX}}
	\sum_{\substack{\sigma_x = \pm\\ \sigma_t=\pm}} 
	\cP\big(\hat m(t,x) - \nu \partial_h \hat \rho(t+\sigma_t \tau,x) - \hat \rho(t+\sigma_t \tau, x+\sigma_x h)\, u^{\sigma_t}_{\sigma_x}(t,x),
	\\[-2.5ex]
\nonumber
	& \hspace{7cm}
	\rho(t+\sigma_t \tau, x+\sigma_x h)+\hat \rho(t+\sigma_t \tau, x+\sigma_x h)\big),
	\\[-5ex]
\nonumber
\end{align}
\begin{align*}
	\text{where }
	m &= \partial_\tau \phi, &
	\rho &= 1-\partial_h \phi, &
	\hat m &:= \partial_\tau \hat \phi, & 
	\hat \rho &:= -\partial_h \hat \phi, &
	u^{\sigma_t}_{\sigma_x}(t,x) := \frac{m(t,x)-\partial_h \rho(t+\sigma_t,x)}{\rho(t+\sigma_t \tau,x+\sigma_x h)},
\end{align*}
where $\sigma_t,\sigma_x \in \{+,-\}$ are signs.
The ratio defining $u^{\sigma_t}_{\sigma_x}$ should in general be understood in the sense of \mbox{%DIFAUXCMD
\cref{prop:perspective}}\hskip0pt%DIFAUXCMD
; in the proof, however, the denominator $\rho$ is positive for sufficiently small $h>0$ thanks to \mbox{%DIFAUXCMD
\cref{assum:Burgers,prop:consistencyBurgers} }\hskip0pt%DIFAUXCMD
below.
%DIF > \todoi{here}
%DIF > whose denominator is positive for sufficiently small $h>0$ by \cref{assum:Burgers,prop:consistencyBurgers} below, and where $\sigma_t,\sigma_x \in \{+,-\}$ are signs.
 Note that $\phi,\hat \phi, \partial_h \rho, \partial_h \hat \rho \in \bR^{\cT \times \cX}$ are centered in time and space, $m,\hat m, u^{\sigma_t}_{\sigma_x} \in \bR^{\cT'\times \cX}$ are staggered in time but centered in space, and $\rho,\hat \rho \in \bR^{\cT \times \cX'}$ are centered in time but staggered in space, since we use staggered finite difference operators $\partial_\tau$ and $\partial_h$, see \eqref{eqdef:DtDhe}.

\begin{proposition}
\label{prop:BurgersStrictCvx}
The energy $\cE$ is strictly convex over the domain $\Phi$. 	
\end{proposition}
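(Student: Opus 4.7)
The plan is to reuse, almost verbatim, the argument of \cref{prop:porousStrictCvx}. Thanks to \cref{prop:perspective}, the energy admits the exact expansion $\cE(\phi + s\hat\phi) = \cE(\phi) + s\,\cL(\hat\phi;\phi) + \cQ(s\hat\phi;\phi)$ for all $\phi, \hat\phi \in \Phi$ with $\cE(\phi) < \infty$, and plain convexity of $\cE$ is immediate since $\cQ$ is a sum of convex perspective contributions and $\cL$ is linear in $\hat\phi$. The real work is to upgrade this to strict convexity, i.e.\ to show that if $s \mapsto \cE(\phi + s\hat\phi)$ is finite and affine on $[0,1]$, then $\hat\phi \equiv 0$. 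Since $s\,\cL(\hat\phi;\phi)$ is already linear, this hypothesis reduces to affineness of $s \mapsto \cQ(s\hat\phi;\phi)$.

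From there, the plan is to prove by backward induction on $n = 0, \ldots, N_\tau := T/(2\tau)$ that $\hat\phi(T - 2\tau n, \cdot) \equiv 0$ on $\cX$. The base case $n = 0$ is built into the definition of $\Phi$. For the inductive step, set $t := T - 2\tau n - \tau \in \cT'$ and assume $\hat\phi(t + \tau, \cdot) \equiv 0$. All spatial finite differences of $\hat\phi$ at time $t+\tau$ then vanish, so $\hat\rho(t+\tau, \cdot) \equiv 0$ and $\partial_h \hat\rho(t+\tau, \cdot) \equiv 0$, while $\hat m(t, x) = -\hat\phi(t-\tau, x)/(2\tau)$. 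The contribution of $\sigma_t = +$ at time $t$ to $\cQ(s\hat\phi;\phi)$ thus collapses, after cancellation of both the viscous coupling $s\nu\partial_h \hat\rho$ and the product $s\hat\rho\, u^{+}_{\sigma_x}$, to the simpler expression $\sum_{\sigma_x = \pm} \cP\big(s\hat m(t, x),\, \rho(t + \tau, x + \sigma_x h)\big)$.

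The total $\cQ(s\hat\phi;\phi)$ is a sum of convex functions of $s$, and its affineness forces each individual summand to be affine: if $\sum_i f_i$ is affine with all $f_i$ convex, then $f_j = (\text{affine}) - \sum_{i \neq j} f_i$ is concave, hence affine. For each $\sigma_x$, the scalar map $s \mapsto \cP(s a, b)$ with $b \geq 0$ is finite and affine on $[0,1]$ only if $a = 0$, being either a strictly convex parabola (when $b > 0$) or equal to $\infty$ whenever $s a \neq 0$ (when $b = 0$). Applied to the previous display, this forces $\hat m(t, x) = 0$ for all $x \in \cX$, hence $\hat\phi(t - \tau, x) = 0$, closing the induction. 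The main anticipated obstacle is purely bookkeeping, namely tracking the staggered spatial grid for $\rho$ and $\hat\rho$, and the offsets between the times where $\hat m$ and $\hat\rho$ live; conceptually the viscous term causes no difficulty, since it disappears automatically under the inductive hypothesis.
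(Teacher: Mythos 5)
Your proof is correct and follows essentially the same argument as the paper: the same expansion via \cref{prop:perspective}, the same backward induction in time, and the same observation that the $\sigma_t = +$ contribution at time $t = T - (2n+1)\tau$ reduces to $s^2 \cP(\hat m(t,x),\rho(t+\tau,x+\sigma_x h))$, which is affine only if $\hat m(t,x)=0$. The only cosmetic difference is that you keep both $\sigma_x=\pm$ terms and spell out the (standard) fact that each convex summand of an affine sum must itself be affine, whereas the paper fixes $\sigma_x=+$ and leaves that step implicit.
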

\begin{proof}
	We proceed similarly to \cref{prop:porousStrictCvx} and note that $\cE$ is convex since it is a sum of perspective functions (composed with affine mappings) which are convex, but that \emph{strict} convexity again requires an additional argument. Consider $\phi,\hat \phi \in \Phi$ such that $\cE(\phi+s \hat \phi) = \cE(\phi) + s \cL(\hat \phi; \phi) + \cQ(s\hat \phi; \phi)$  is finitely valued and affine w.r.t.\ $s\in [0,1]$, i.e.\ $\cQ(s\hat \phi; \phi)$ is affine. We note that $\hat \phi(T,x)=0$ for all $x\in \cX$, by definition of the domain $\Phi$, and assume for induction that $\hat \phi(T-2n \tau,x) = 0$ for all $x \in \cX$, for some $0 \leq n < N_\tau = T/(2\tau)$. 
	Letting $t := T-(2n+1) \tau$ we obtain $\hat m(t,x) = \hat \phi(t-\tau,x)/(2\tau)$ and $\hat \rho(t+\tau,x+h)=0$ for all $x \in \cX$.		
	Consider the contribution to $\cQ(s\hat \phi; \phi)$ associated with the signs $\sigma_t=+$ and $\sigma_x=+$ (arbitrarily in the case of $\sigma_x$),reading %DIF > which reads
	\begin{align*}
		&\cP( s \hat m(t,x) - \nu s \partial_h \hat \rho(t+\tau,x) - s \hat \rho(t+ \tau,x+ h) u^+_+(t,x),\, \rho(t+\tau,x+h) + s\hat \rho(t+\tau,x+h)) \\
		&= \frac{s^2}{2 \tau} \cP(\hat \phi(t-\tau,x), \rho(t+\tau,x+h)).
	\end{align*}
	By assumption, this is an affine function of $s\in [0,1]$. Therefore $0 = \hat \phi(t-\tau,x) = \hat \phi(T-2\tau(n+1),x)$, hence $\hat \phi=0$ by an immediate induction, 
	thus the announced strict convexity.
\end{proof}

The proof outline of \mbox{%DIFAUXCMD
\cref{th:cvPorous}}\hskip0pt%DIFAUXCMD
, described \mbox{%DIFAUXCMD
\cref{sec:cv} }\hskip0pt%DIFAUXCMD
for the QPME, also applies to \mbox{%DIFAUXCMD
\cref{th:cvBurgers} }\hskip0pt%DIFAUXCMD
and Burgers' equation, with natural adjustments for the intermediate results \eqref{eq:linL1} see \mbox{%DIFAUXCMD
\cref{subsec:LBurgers}}\hskip0pt%DIFAUXCMD
, and \eqref{eq:implDiff} and \eqref{eq:phiR} see \mbox{%DIFAUXCMD
\cref{subsec:QBurgers}}\hskip0pt%DIFAUXCMD
.
%DIF > We only need to adapt the proof of the estimate \eqref{eq:linL1} of the linear term $\cL(\hat \phi; \phi)$
%DIF > We estimate in the next two subsections the linear $\cL(\hat \phi; \phi)$ and quadratic $\cQ(\hat \phi; \phi)$ terms in the expansion of the BBB energy $\cE$. 

\subsection{Upper estimate of the linear term}
\label{subsec:LBurgers}
We estimate in this subsection the linear term $\cL(\hat \phi; \phi)$ in terms of the $\ell^1(\cX)$ norm of the perturbation $\hat \phi$, establishing in \eqref{eq:linL1Burgers} the first inequality \eqref{eq:linL1} from the proof template, under \cref{assum:Burgers}.
For that purpose we rewrite $\cL(\hat\phi; \phi)$ as finite differences of $\phi$ tested against $\hat \phi$, using discrete summations by parts in time and space, and use the consistency and the symmetry of the proposed numerical scheme. 
%DIF > see the estimate \eqref{eq:linL1Burgers} which holds under \cref{assum:Burgers}.
%DIF > As a first step, we r
Recall that the finite difference operator  $\partial_h$ is skew-adjoint, in the sense that: 
\begin{equation}
\label{eq:DhAdj}
	\sum_{x \in \cX} u(x) \partial_h v(x+h) = \frac{1}{2 h} \sum_{x \in \cX} u(x) v(x+2h)-\frac{1}{2 h}\sum_{x \in \cX} u(x) v(x) = - \sum_{x \in \cX} v(x) \partial_h u(x-h),
\end{equation}
for all $u,v \in \bR^\cX$,
similarly to \cref{lem:LhAdj}. 
We next rewrite $\cL(\hat \phi; \phi)$ using \eqref{eq:DhAdj} and the discrete summation by parts w.r.t.\ time of \cref{lem:timeIPP}:  $-\cL(\hat \phi; \phi) =$ 
\begin{align}
\nonumber
	&
	h \sum_{x \in \cX} \sum_{\sigma_x=\pm} \Big[
	\frac{u^-_{\sigma_x}(\tau,x)+u^+_{\sigma_x}(\tau,x)}{2} \hat \phi(0,x) % cf denominator 1/(2\tau)
	+ \tau \nu u^-_{\sigma_x}(\tau,x) \partial_h \hat \rho(0,x) 
	+ \frac{\tau}{2} u^-_{\sigma_x}(\tau,x)^2 \hat \rho(0,x+\sigma_x h) 
	- u_0(x) \hat \phi(0,x)\Big]
	\\
\nonumber
	&+ \tau h 
	\sum_{\substack{t \in \mathring\cT \\ x \in \cX}}
	\sum_{\substack{\sigma_x = \pm\\ \sigma_t=\pm}} \Big[
	\partial_\tau u^{\sigma_t}_{\sigma_x} (t,x) \hat \phi(t,x) 
	+ u^{\sigma_t}_{\sigma_x} (t-\sigma_t \tau,x) \,\nu \partial_h \rho(t,x) 
	+ \frac{1}{2} u^{\sigma_t}_{\sigma_x} (t-\sigma_t \tau ,x)^2 \hat \rho(t,x+\sigma_x h)
	\Big]
	\\
\nonumber
	&
	=h \sum_{x \in \cX} \sum_{\sigma_x=\pm} \Big(
	\frac{u^-_{\sigma_x}(\tau,x)+u^+_{\sigma_x}(\tau,x)}{2}
	-\tau \nu \partial_{hh} u^-_{\sigma_x}(\tau,x)
	+ \frac{\tau}{2} \partial_h [(u_{\sigma_x}^-)^2](\tau,x-\sigma_x h) 
	- u_0(x)\Big) \hat \phi(0,x) 
	\\
\label{eq:burgersIPP}
	&+\tau h\!
	\sum_{\substack{t \in \mathring\cT \\ x \in \cX}}
	\sum_{\substack{\sigma_x = \pm\\ \sigma_t=\pm}} \!
	\Big(
	\partial_\tau u^{\sigma_t}_{\sigma_x} (t,x)
	- \nu \partial_{hh} u^{\sigma_t}_{\sigma_x}(t-\sigma_t \tau,x)  
	+ \frac{1}{2} \partial_h\big[ (u^{\sigma_t}_{\sigma_x})^2\big] (t-\sigma_t \tau ,x-\sigma_x h)
	\Big) \hat \phi(t,x).
\end{align}
In the rest of this section, we work under \cref{assum:Burgers}, and thus assume that $\phi \in C^\infty([0,T]\times \bT^d,\bR)$ is a smooth function which solves the continuous problem.
We recognize in \eqref{eq:porousIPP0} and \eqref{eq:porousIPP1} a discrete version of the primal PDE we started from, namely Burgers' equation (\ref{eq:QPME_Burgers}, right). 
The next step, achieved in \mbox{%DIFAUXCMD
\cref{prop:consistencyBurgers}}\hskip0pt%DIFAUXCMD
, is to prove that our discretization choice implies that $u^\sigma$, $\sigma=\pm$, which is defined in terms of $\phi$, solves this discrete PDE up to a residue quadratically small in $\tau$ and $h$.
We distinguish the finite difference
$m := \partial_\tau \phi$ fromthe time derivative $\bm := \partial_t \phi$, and likewise
\begin{align*}
\hspace{-2mm}
%	m &:= \partial_\tau \phi,&
	\rho &:= 1-\partial_h \phi, &
	\brho &:= 1-\partial_x \phi, &
	u^{\sigma_t}_{\sigma_x}(t,x) &:= \frac{m(t,x)-\partial_h \rho(t+\sigma_t,x)}{\rho(t+\sigma_t \tau,x+\sigma_x h)}, &
%	\bm &:= \partial_t \phi,& 
	\bu &:= \frac{\bm-\partial_x \brho}{\brho}.
\end{align*}
Note that $m$ and $u^{\sigma_t}_{\sigma_x}$ are defined over $[\tau,T-\tau]\times \bT$ whereas the other quantities are defined over the full domain $[0,T] \times \bT$. By \cref{assum:Burgers} one has $\brho>0$ uniformly, and $\bu$ obeys Burger's equation $\partial_t \bu + \tfrac 1 2 \partial_x \bu^2 = \nu \partial_{xx} \bu$ for some non-negative viscosity coefficient $\nu \geq 0$.

\begin{proposition}
\label{prop:consistencyBurgers}
Under \cref{assum:Burgers} and with the notations $m,\rho,u^{\sigma_t}_{\sigma_x},\bm,\brho,\bu$ above:
\begin{align*}
	m &= \bm + \cO(\tau^2), &
	\rho &= \brho + \cO(h^2), &
	u^{\sigma_t}_{\sigma_x} &= \bu + \cO(\tau+h), &
	\!\!\!\sum_{\sigma_t,\sigma_x=\pm} u^{\sigma_t}_{\sigma_x} &= 4 \bu + \cO(\tau^2+h^2),
\\[-6ex]
\end{align*}
\begin{align*}
	\partial_\tau u^{\sigma_t}_{\sigma_x} &= \partial_t \bu +\cO(\tau+h),&
	\partial_{hh} u^{\sigma_t}_{\sigma_x} &= \partial_{xx} \bu +\cO(\tau+h),&
	\partial_h (u^{\sigma_t}_{\sigma_x})^2 &= \partial_x \bu^2 + \cO(\tau+h),&
\end{align*}
\begin{align*}
	\sum_{\substack{\sigma_x = \pm\\ \sigma_t=\pm}} \partial_\tau u^{\sigma_t}_{\sigma_x} &= 4 \partial_t \bu + \cO(\tau^2+h^2), &
	\sum_{\substack{\sigma_x = \pm\\ \sigma_t=\pm}} \partial_{hh} u^{\sigma_t}_{\sigma_x}(t-\sigma_t \tau,x) &= 4\partial_{xx}\bu(t,x)+\cO(\tau^2+h^2),
\end{align*}
and $
\sum^{\sigma_x=\pm}_{\sigma_t=\pm} 
\partial_h\big[ (u^{\sigma_t}_{\sigma_x})^2\big] (t-\sigma_t \tau ,x-\sigma_x h) 
= 4\partial_x\bu^2 + \cO(\tau^2+h^2)$. 
The $\cO$ notation is understood in the $L^\infty$ norm over the common domain of definition. The signs $\sigma_t=\pm$ and $\sigma_x=\pm$ are arbitrary in the above expressions unless they are bound to a sum symbol.
\end{proposition}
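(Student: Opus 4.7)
The plan is to follow the template of the proof of Proposition \ref{prop:consistency}, reducing each estimate to a symmetry analysis of a smooth function of the discretization parameters. First, I would extend $\phi$ periodically in time, so $\phi \in C^\infty(\bR/(T+1)\bZ \times \bT, \bR)$ with matching smoothness, and allow $\tau, h$ to be arbitrary real numbers (including zero or negative), so that Lemmas \ref{lem:diffAsInt} and \ref{lem:TaylorSym} apply. Each quantity to be estimated reads $F(t,x;\tau,h) = f(t,x) + \cO(|\tau|^a + |h|^b)$ for some $a,b \in \{1,2\}$; repeated applications of Lemma \ref{lem:diffAsInt} rewrite the staggered differences $\partial_\tau$ and $\partial_h$ as integrals of partial derivatives, yielding a jointly $C^2$ function $\gF(t,x;\tau_1,\dots,\tau_m,h_1,\dots,h_n)$ with $F(t,x;\tau,h) = \gF(t,x;\tau,\dots,\tau,h,\dots,h)$ and $f(t,x) := \gF(t,x;0,\dots,0)$ equal to the continuous analogue. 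The orders $a,b$ are then read off from the parity of $F$ in $\tau$ and $h$ via Lemma \ref{lem:TaylorSym}.

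The first two estimates $m = \bm + \cO(\tau^2)$ and $\rho = \brho + \cO(h^2)$ are identical to the QPME case, since $\partial_\tau$ is even in $\tau$ and $\partial_h$ is even in $h$. In particular $\rho$ inherits from $\brho > 0$ a uniform positive lower bound for $h$ small enough, so $u^{\sigma_t}_{\sigma_x}$ is well defined. The third estimate $u^{\sigma_t}_{\sigma_x} = \bu + \cO(\tau + h)$ is only first order because the denominator $\rho(t + \sigma_t \tau, x + \sigma_x h)$ carries a time shift $\sigma_t \tau$ breaking $\tau$-symmetry, a space shift $\sigma_x h$ breaking $h$-symmetry, and the numerator additionally contains the time-shifted term $\nu \partial_h \rho(t + \sigma_t \tau, x)$. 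Summing $\sum_{\sigma_x=\pm}$ pairs the shifts $+\sigma_x h$ with $-\sigma_x h$, restoring $h$-symmetry; likewise $\sum_{\sigma_t=\pm}$ restores $\tau$-symmetry. Hence the full sum $\sum_{\sigma_t,\sigma_x=\pm} u^{\sigma_t}_{\sigma_x} = 4\bu + \cO(\tau^2 + h^2)$.

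The next three estimates on $\partial_\tau u^{\sigma_t}_{\sigma_x}$, $\partial_{hh} u^{\sigma_t}_{\sigma_x}$, and $\partial_h[(u^{\sigma_t}_{\sigma_x})^2]$ inherit the $\cO(\tau + h)$ accuracy of $u^{\sigma_t}_{\sigma_x}$: applying an additional symmetric staggered finite difference preserves the first-order error while the continuous limit becomes the corresponding derivative of $\bu$. For the three summed estimates, I would group the four terms by the simultaneous sign involution $(\sigma_t, \sigma_x) \to (-\sigma_t, -\sigma_x)$ combined with $(\tau, h) \to (-\tau, -h)$. For $\sum_{\sigma_t,\sigma_x} \partial_{hh} u^{\sigma_t}_{\sigma_x}(t - \sigma_t \tau, x)$, the shift $-\sigma_t \tau$ in the evaluation point is synchronized with the shift $+\sigma_t \tau$ inside $u^{\sigma_t}_{\sigma_x}$, and writing the summed expression as $F(t,x;\tau,h) = \sum_{\sigma_t,\sigma_x} F_{\sigma_t,\sigma_x}(t,x;\tau,h)$ one checks that $F$ is even in $\tau$ (by the pairing $\sigma_t \leftrightarrow -\sigma_t$ and $\tau \leftrightarrow -\tau$) and even in $h$ (by the pairing $\sigma_x \leftrightarrow -\sigma_x$), yielding $\cO(\tau^2 + h^2)$. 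The identical bookkeeping handles $\sum_{\sigma_t,\sigma_x}\partial_h[(u^{\sigma_t}_{\sigma_x})^2](t - \sigma_t \tau, x - \sigma_x h)$.

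The main obstacle is precisely this symmetry verification for the last two sums, where both the integrand and the evaluation point depend on $(\sigma_t, \sigma_x)$: one must carefully track that the sign-flip involutions commute with the chosen finite-difference representation so that Lemma \ref{lem:TaylorSym} applies with $a = b = 2$. Once the smoothness of the auxiliary functions $\gF$ is granted (thanks to $\brho > 0$ and the $C^\infty$ assumption), the consistency rates follow mechanically from the parity analysis.
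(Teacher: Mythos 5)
Your proposal matches the paper's proof, which is itself a brief deferral to the template of Proposition~\ref{prop:consistency}: extend $\phi$ periodically in time, rewrite staggered differences as integral operators via Lemma~\ref{lem:diffAsInt}, and then read off the consistency order from the parity of the resulting smooth auxiliary function via Lemma~\ref{lem:TaylorSym}. You correctly identify the key bookkeeping point — that for the summed estimates with $(\sigma_t,\sigma_x)$-dependent evaluation points, the involution $(\sigma_t,\sigma_x,\tau,h)\mapsto(-\sigma_t,-\sigma_x,-\tau,-h)$ permutes the four terms and hence makes the sum even in both $\tau$ and $h$, yielding the $\cO(\tau^2+h^2)$ rate. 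The only detail you omit, which the paper mentions in passing, is that $\phi\in C^5$ (rather than $C^\infty$) already suffices; this does not affect the argument.
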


\begin{proof}
It is sufficient in this proof to assume that $\phi\in C^5$, rather than $\phi\in C^\infty$, hence \cref{assum:Burgers} may be slightly weakened.
We follow the proof template of \cref{prop:consistency}, to which we refer for details. 
As before, the result follows from the consistency of the finite differences schemes $\partial_\tau$ and $\partial_h$, and the second order error terms are obtained when the expressions are invariant under the reflections $\tau\mapsto -\tau$ and $h\mapsto -h$. 
The time and space shifts in the definition of $u^{\sigma_t}_{\sigma_x}$ break these symmetries, but they are restored when considering averages of over the four possible sign combinations $\sigma_t=\pm$ and $\sigma_x=\pm$. 
Note also that $\brho>0$ by assumption, hence $\rho>0$ for sufficiently small $|h|$, and thus the ratio $u^{\sigma_t}_{\sigma_x}$ is well defined.
\end{proof}

Inserting the expansions of \cref{prop:consistencyBurgers} in \eqref{eq:burgersIPP} yields 
 \begin{align}
\nonumber
	-\cL(\hat \phi; \phi) &
	= 2h \sum_{x \in \cX} \Big(\bu(\tau,x) - \nu\tau \partial_{xx} \bu(\tau,x) +\frac{\tau}{2} \partial_x \bu^2 (\tau,x)-u_0(x) + \cO(\tau^2+h^2)\Big) \hat \phi(0,x) \\
\label{eq:linL1Burgers}
	&+ 4\tau h \sum_{t \in \mathring\cT} \sum_{x \in \cX} \Big(\partial_t \bu(t,x)-\nu \partial_{xx}\bu (t,x)+\frac{1}{2} \partial_x \bu^2(t,x)+ \cO(\tau^2+h^2)\Big) \hat \phi(t,x).
%	\\
%\label{eq:linL1Burgers}
%	& \leq C_\cL(\tau^2+h^2) \Big(\|\hat \phi(0)\|_1 + \tau \sum_{t \in \mathring\cT} \|\hat \phi(t)\|_1\Big),
\end{align}
Recalling that $\partial_t \bu + \tfrac 1 2 \partial_x \bu^2 = \nu \partial_{xx} \bu$ we obtain \eqref{eq:linL1} as desired.
%DIF > where the constant $C_\cL$ only depends on $\phi$.

\subsection{Lower estimate of the quadratic term}
\label{subsec:QBurgers}

The first part of this subsection estimates the solution of a backwards in time drift diffusion equation, discretized in an upwind and implicit manner see \eqref{eq:implLinBurgers} below, establishing the counterpart of \eqref{eq:phiR} from the proof template in Burgers' case. The second part of this subsection establishes (\ref{eq:EstimLR}, right) in Burgers's case, thus filling the gaps of the proof template and establishing our convergence result \mbox{%DIFAUXCMD
\cref{th:cvBurgers}}\hskip0pt%DIFAUXCMD.

The second  order term $\cQ(\hat \phi; \phi)$ defined in \eqref{eqdef:QBurgers} is a sum of perspective functions, whose first argument reads, for all $t \in \cT'$, $x \in \cX$, and any signs $\sigma_t,\sigma_x=\pm$,
\begin{align}
\label{eq:BurgersLinScheme}
	r^{\sigma_t}_{\sigma_x}(t,x) &:= 
	\hat m (t,x)-\nu \partial_h \hat \rho(t+\sigma\tau,x) 
	- u^{\sigma_t}_{\sigma_x}(t,x) \hat \rho(t+\sigma_t \tau,\, x+\sigma_x h),\\
\label{eq:BurgersLinScheme2}
	&=\partial_\tau \hat \phi(t,x) + \nu \partial_{hh} \hat \phi(t+\sigma \tau,x) + u_{\sigma_x}^{\sigma_t} \partial_h \hat \phi(t+\sigma_t \tau, x + \sigma_x h).
\end{align}

\paragraph{Proof of the estimate \eqref{eq:phiR} of $\hat \phi$ in terms of $r_+^-$.}
 Similarly to \cref{subsec:QPorous}, we regard $r^{\sigma_t}_{\sigma_x}$ as the r.h.s.\ of the discretization a time reversed linear PDE, see \eqref{eq:implLinBurgers} below, closely related to \eqref{eq:pdePhi}. 
 The time discretization is explicit if $\sigma_t=+$, and implicit if $\sigma_t=-$. 
Define 
\begin{align}
\label{eqdef:Dhp}
	\partial^+_h u(x) &:= \frac{u(x+2h)-u(x)}{2h} = \partial_h u(x+h), &
	\partial^-_h u(x) &:= \frac{u(x)-u(x-2h)}{2h} = \partial_h u(x-h),
\end{align}
the non-centered first order finite differences operator.
We focus on the parameters $\sigma_t=-$ and $\sigma_x=+$ (implicit scheme, upwind finite difference) which yield the most stable discretization, rename for readability  $u := u^-_+$ and $r := r^-_+$, and regard \eqref{eq:BurgersLinScheme2} as Burgers' counterpart to \eqref{eq:implDiff} from the QPME proof template.
Expanding the time finite difference in \eqref{eq:BurgersLinScheme2} and $\hat \rho := -\partial_h \hat \phi$ in \eqref{eq:BurgersLinScheme} and rearranging terms we obtain (omitting the space variable $x$ for readability)

 \begin{align}	
\label{eq:implLinBurgers}
	\hat \phi(t-\tau) 
	- 2 \tau\,  \nu \partial_{hh} \hat \phi(t-\tau)
	- 2\tau\, u(t)\, \partial_h^+ \hat \phi(t-\tau) &= \hat \phi(t+\tau) - 2 \tau\, r(t), &
	\hat \phi(T)&=0.
\end{align}

\begin{proposition}
\label{prop:implBurgers}
For any $u \in [0,\infty[^\cX$ and any $\nu \geq 0$, the operator $A:=\Id -\nu \partial_{hh} - u \partial_h^+$ is inverse positive. If in addition $\|\partial_h u\|_\infty<1$, then $\|A^{-1} f \|_1 \leq (1-\|\partial_h u\|_\infty)^{-1} \|f\|_1$, $\forall f \in \bR^\cX$. 
\end{proposition}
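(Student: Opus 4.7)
The plan is to mirror the proof of \cref{prop:implLap} for the QPME, replacing the self-adjoint anisotropic Laplacian $L_h$ by the (non self-adjoint) drift-diffusion operator $-\nu\partial_{hh}-u\partial_h^+$. The two key ingredients will be again (i) Minkowski's \cref{lem:Minkowski} to obtain inverse positivity, and (ii) a summation-by-parts argument yielding the $\ell^1$ norm bound.

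For (i), I would write out the action of $A$ on $v\in \bR^\cX$ in the canonical basis, using $-\nu\partial_{hh}v(x)=\nu(2h)^{-2}\bigl(2v(x)-v(x+2h)-v(x-2h)\bigr)$ and $-u(x)\partial_h^+ v(x)=u(x)(2h)^{-1}\bigl(v(x)-v(x+2h)\bigr)$. Collecting terms, the diagonal entry equals $1+2\nu(2h)^{-2}+u(x)(2h)^{-1}$, while the only non-zero off-diagonal entries are the coefficients of $v(x+2h)$ and $v(x-2h)$, equal to $-\nu(2h)^{-2}-u(x)(2h)^{-1}$ and $-\nu(2h)^{-2}$ respectively. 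Since $\nu\geq 0$ and $u(x)\geq 0$, these off-diagonal entries are non-positive, and an immediate computation shows that each row sums to exactly $1>0$, so \cref{lem:Minkowski} applies and $A$ is inverse positive. (The row sum is constant, reflecting the fact that $\partial_{hh}$ and $\partial_h^+$ annihilate constants.)

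For (ii), I take $f\in \bR^\cX$ with $f\geq 0$, set $g:=A^{-1}f\geq 0$ (by step (i)), and expand
\begin{equation*}
\tfrac{\|f\|_1}{2h}=\sum_{x\in\cX} Ag(x)
=\sum_{x\in\cX} g(x)-\nu\sum_{x\in\cX}\partial_{hh}g(x)-\sum_{x\in\cX} u(x)\,\partial_h^+ g(x).
\end{equation*}
The diffusion sum vanishes because $\partial_{hh}$ is in divergence form on the periodic grid (equivalently, self-adjointness of $\partial_{hh}$ plus annihilation of constants, as in the proof of \cref{prop:implLap} using \cref{lem:LhAdj}). For the drift term, the change of variable $y=x+2h$ and the identity $\partial_h^+ u(x) = \partial_h u(x+h)$ from \eqref{eqdef:Dhp} give, similarly to \eqref{eq:DhAdj},
\begin{equation*}
\sum_{x\in\cX} u(x)\,\partial_h^+ g(x)=-\sum_{x\in\cX} g(x)\,\partial_h u(x-h).
\end{equation*}
Substituting back, using $g\geq 0$ and $|\partial_h u(x-h)|\leq \|\partial_h u\|_\infty$, yields
\begin{equation*}
\tfrac{\|f\|_1}{2h}=\sum_{x\in\cX} g(x)\,\bigl(1+\partial_h u(x-h)\bigr)\geq (1-\|\partial_h u\|_\infty)\,\tfrac{\|g\|_1}{2h},
\end{equation*}
which gives the announced estimate for $f\geq 0$. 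The case of general $f$ follows by decomposing $f=f_+-f_-$ and invoking linearity of $A^{-1}$ together with $\|f\|_1=\|f_+\|_1+\|f_-\|_1$, exactly as in the final paragraph of the proof of \cref{prop:implLap}.

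The only mild subtlety, and probably the main place to be careful, is the book-keeping of the staggered grids in the drift term: $\partial_h u$ lives on $\cX'$, while $u$, $g$, and $Ag$ live on $\cX$, so one must check that the shift $x\mapsto x-h$ appearing after summation by parts lands in the correct grid. All other steps are direct transcriptions of the QPME argument, and in particular no coercivity beyond $\nu\geq 0$ is needed (the case $\nu=0$ is covered, corresponding to a pure upwind first-order scheme).
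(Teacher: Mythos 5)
Your proof is correct and takes essentially the same route as the paper: Minkowski's \cref{lem:Minkowski} for inverse positivity via the non-positive off-diagonal entries and unit row sums, then for $f\geq 0$ the identity $\frac{\|f\|_1}{2h}=\sum_{x\in\cX} g(x)\bigl(1+\partial_h^- u(x)\bigr)$ obtained by summation by parts (your $\partial_h u(x-h)$ is exactly $\partial_h^- u(x)$ per \eqref{eqdef:Dhp}), and finally the $f=f_+-f_-$ splitting. The only difference is that you write out explicitly the off-diagonal entries of $A$ and the change of variables in the drift term, which the paper leaves implicit by citing \eqref{eq:DhAdj} and \eqref{eq:LhExpl}.
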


\begin{proof}
We proceed similarly to the proof of \cref{prop:implLap}, and observe that the matrix of the finite differences operator $A := \Id -\nu \partial_{hh} - u \partial_h^+$ has non-positive off-diagonal entries, and the sum of each row is one. 
Inverse positivity therefore follows from \mbox{%DIFAUXCMD
\cref{lem:Minkowski}}\hskip0pt%DIFAUXCMD
, as announced.
Now consider $f\geq 0$, and let $g := A^{-1} f$, which is thus non-negative. %: $g\geq 0$
One has 
\begin{equation*}
\!\frac{\|f\|_1}{2h} = \sum_{x \in \cX} f(x) = \sum_{x \in \cX} [g(x) -\nu \partial_{hh} g(x) - u(x) \partial_h^+ g(x)]
= \sum_{x \in \cX} g(x)(1+\partial_h^- u(x)) \geq \frac{\|g\|_1}{2h} (1-\|\partial_h u\|_\infty),
\end{equation*}

using the adjointness \eqref{eq:DhAdj} of $\partial_h^+$ and $-\partial_h^-$ ,  the fact that $\partial_{hh}$ is self-adjoint and vanishes on constants, and the observation that $\|\partial_h^+ u\|_\infty = \|\partial_h u\|_\infty$ by \eqref{eqdef:Dhp}. We conclude, similarly to \cref{prop:implLap}, by splitting an arbitrary $f \in \bR^\cX$ into a positive and a negative part and using the linearity of $A^{-1}$.
\end{proof}

Using the discrete Duhamel formula, or simply an induction on the number $N=0,\cdots,N_\tau := T/(2\tau)$ of timesteps, we obtain assuming that $u\geq 0$
\begin{equation*}
	\hat \phi(T-2\tau\, N) = -2 \tau \sum_{0 \leq n < N} \Big[\prod_{n \leq k < N} \big(1-2\tau\, \nu \partial_{hh} - 2\tau\, u(T-(2k+1)\tau) \partial_h^+\big) \Big]^{-1} r(T-(2n+1)\tau)
\end{equation*}
The estimate \eqref{eq:phiR} from the proof template follows, with the constant 
\begin{equation*}
	\prod_{t \in \cT'} (1-2\tau \|\partial_h u\|_\infty)^{-1} \leq (1-\tau K)^{-N_\tau} \leq C_\cQ := \exp(T K), \text{ with } K := 2\max_{t \in \cT'} \|\partial_h u\|_\infty,
\end{equation*}
and assuming that $\tau K \leq 1/2$, so that $(1-\tau K)^{-1} \leq \exp(2 \tau K)$, in addition to $u \geq 0$.

Let us now show that the latter two conditions hold  under \cref{assum:Burgers}, which states that $\phi \in C^\infty([0,T]\times \bT,\bR)$ is a smooth function, defined in terms of Burgers' solution $\bu>0$ itself assumed to be smooth and positive. 
Recalling that $u^{\sigma_x}_{\sigma_t} = \bu + \cO(\tau+h)$ by \cref{prop:consistencyBurgers}, we choose $\tau_*>0$ and $h_*>0$ such that $u^{\sigma_x}_{\sigma_t} \geq 0$ on $[\tau,T-\tau]\times \bT$ for all $\tau \in ]0,\tau_*]$ and $h \in ]0,h_*]$, and thus in particular  $u := u^-_+\geq 0$. 
%DIF > Using \eqref{eq:implLinBurgers} and \cref{prop:implBurgers} we obtain 
%DIF > \begin{align}
%DIF > \label{eq:phiRBurgers}
%DIF > 	\|\hat \phi(T-2\tau\, N)\|_1 
%DIF > 	&\leq 2 \tau \sum_{0 \leq n < N} \|r(T-(2n+1)\tau)\|_1 (1-\tau K)^{N-n}, &
%DIF > 	\text{ } K &:= 2 \max_{t \in \cT'} \| \partial_h u(t)\|_\infty,
%DIF > \end{align}
%DIF > for all $0 \leq N \leq N_\tau := T/(2\tau)$, by an immediate induction argument and provided $\tau K < 1$. 
By \cref{prop:consistencyBurgers}, one has $\partial_h u(t) = \partial_x \bu + \cO(\tau+h)$, hence $K$ is bounded independently of $\tau$ and $h$, and up to reducing the maximum timestep $\tau_*>0$ we may assume that $\tau_* K \leq 1/2$ as desired.

\paragraph{Proof of the lower bound (\ref{eq:EstimLR}, right).}
%DIF > From this point, t
The argument is identical to \cref{subsec:QPorous}, with the sole exception of \cref{lem:int1} which needs to be replaced with the following counterpart, whose proof is trivial hence omitted.
\begin{lemma}
	Let $\phi \in \bR^\cX$ and $\rho := 1 - \partial_h \phi$. If $\rho \geq 0$ then $\|\rho\|_1=1$. 
\end{lemma}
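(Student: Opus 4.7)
The plan is to mirror the proof of \cref{lem:int1} verbatim, replacing the self-adjointness of $L_h$ (together with the fact that $L_h$ vanishes on constants) by the analogous summation-by-parts property of the first-order difference $\partial_h$ on the periodic lattice. First I would establish that the discrete derivative $\partial_h \phi$ has vanishing sum over the staggered grid $\cX'$ on which $\rho$ is defined:
$$\sum_{x \in \cX'}\partial_h \phi(x) = \frac{1}{2h}\sum_{x \in \cX'}\bigl[\phi(x+h) - \phi(x-h)\bigr] = 0,$$
which is immediate by telescoping, or equivalently by translation invariance of the sum over the periodic lattice. A more formal way to obtain it is to apply the discrete adjointness relation \eqref{eq:DhAdj} to the constant function $u \equiv 1$, for which $\partial_h u \equiv 0$.

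Second, I would conclude by a direct computation: since $\rho \geq 0$ by assumption, one has $|\rho(x)| = \rho(x)$ pointwise, and therefore
$$\|\rho\|_1 = (2h) \sum_{x \in \cX'} \rho(x) = (2h) \sum_{x \in \cX'}\bigl(1 - \partial_h \phi(x)\bigr) = (2h)\cdot N_h = 1,$$
using $\#(\cX') = N_h = 1/(2h)$ from \cref{assum:step}.

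There is no genuine obstacle here: the identity is a one-line discrete counterpart of the continuous normalization $\int_\bT \rho = \int_\bT(1 - \partial_x \phi) = 1$. The only point requiring (very mild) care is that $\rho$ lives on the staggered spatial grid $\cX'$ rather than the centered grid $\cX$ on which $\phi$ is defined, so the summation must run over $\cX'$; but since $\#(\cX') = \#(\cX) = N_h$, the normalization constant is unchanged. The role of this identity in the proof template of \cref{subsec:QPorous}, now transported to the Burgers setting, is precisely to guarantee $\|\rho(t-\tau)+\hat\rho(t-\tau)\|_1 = 2$ whenever the total density remains non-negative, which in turn drives the bound (\ref{eq:EstimLR}, right) via Cauchy--Schwarz and \cref{lem:scalarPersp}, closing the last gap in the proof of \cref{th:cvBurgers}.
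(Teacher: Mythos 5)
Your proof is correct, and it is exactly the argument the paper intends: the paper explicitly declines to write it out (``whose proof is trivial hence omitted''), and the natural route is, as you do, to mirror the proof of \cref{lem:int1}, replacing the self-adjointness of $L_h$ and its vanishing on constants by the telescoping/periodicity of $\partial_h$ (equivalently, \eqref{eq:DhAdj} applied to $u\equiv 1$). Your observation that the sum must run over the staggered grid $\cX'$, which has the same cardinality $N_h=1/(2h)$ as $\cX$ so that the normalization constant is unchanged, is a legitimate and worthwhile clarification of a mild notational abuse in the paper.
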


\section{Arithmetic mean based scheme}
\label{sec:arith}
We consider in this appendix a variant $\cE^\theta_{\tau h}$, where $\theta\in [0,1]$, of the energy \eqref{eqdef:porousEnergy} associated with the discretization of the QPME. It is defined as follows, with the notations of \cref{lem:domPorous}
\begin{align}
\label{eq:arithEnergy}
	\cE^\theta_{\tau h}(m,\rho) &:= 2 \tau (2h)^d \sum_{x \in \cX} \sum_{t \in \cT'} 
	\frac{m(t,x)^2}{2\, \cI_\tau^\theta \rho(t,x)} - m(t,x) u_0(x), \\
\label{eq:timeInterpolation}
	\text{where }
	\cI_\tau^\theta \rho(t,x) &:= (1-\theta) \rho(t-\tau,x)+\theta \rho(t+\tau,x).
\end{align}
The ratio in \eqref{eq:arithEnergy} should be understood as an instance of the perspective function \eqref{eqdef:perspective}, and $\cI_\tau^\theta$ thus denotes a linear time interpolation operator.
By convention, $\cE_{\tau h}^\theta(\phi) := \cE_{\tau h}^\theta(\partial_\tau\phi,1+L_h\phi)$, for all $\phi \in \Phi^\Porous_{\tau h}$. For readability, we denote $\cE^\theta := \cE^\theta_{\tau h}$ and $\cI^\theta := \cI_\tau^\theta$ in the following, and we also adopt \cref{not:porous}: $\Phi := \Phi^\Porous_{\tau h}$, $\cT := \cT_\tau$, $\cT':=\cT_\tau'$, $\cX := \bT_h^d$, etc. 

Using the exact expansion of the perspective function, see \cref{prop:perspective}, we obtain 
$\cE^\theta(\phi+\hat \phi) = \cE^\theta(\phi) + \cL^\theta(\hat \phi; \phi) + \cQ^\theta(\hat \phi; \phi)$ for any $\phi,\hat \phi \in \Phi$ such that $\cE^\theta(\phi)<\infty$,  where 
\begin{align*}
	\cL^\theta(\hat \phi; \phi) &
	:= 2\tau (2h)^d \sum_{x\in \cX} \sum_{t \in \cT'} \Big(
	u(t,x) \hat m(t,x) - \tfrac{1}{2}u(t,x)^2\cI^\theta \hat \rho(t,x) 
	\Big)
	+ (2h)^d \sum_{x \in \cX} u_0(x) \hat \phi(0,x),\\
	\cQ^\theta(\hat \phi; \phi) &
	:= 2\tau (2h)^d \sum_{x\in \cX} \sum_{t \in \cT'} 
	\cP(\hat m(t,x) - u(t,x) \cI^\theta \hat \rho(t,x),\, \cI^\theta \rho(t,x) + \cI^\theta \hat \rho(t,x)),\\
	\text{with }&
	m := \partial_\tau \phi,\ 
	\rho := 1+L_h \phi,\ 
	\hat m := \partial_\tau \hat \phi,\ 
	\hat \rho := L_h \hat \phi, 
	\text{ and } u(t,x) := m(t,x) / \cI^\theta \rho(t,x).
\end{align*}
The ratio defining $u$ should be understood in the sense of \mbox{%DIFAUXCMD
\cref{prop:perspective} }\hskip0pt%DIFAUXCMD
in general; note nevertheless that the denominator $\cI^\theta \rho(t,x)$ is positive for sufficiently small $h>0$ under  \mbox{%DIFAUXCMD
\cref{assum:Burgers}}\hskip0pt%DIFAUXCMD
. The arithmetic interpolation operator w.r.t.\ time satisfies 
\begin{equation*}
	\sum_{t\in \cT'} \cI_\tau^\theta f(t) \, g(t) 
	= (1-\theta) f(0) g(\tau) + \theta f(T) g(T-\tau) 
	+  \sum_{t \in \mathring \cT} f(t) \, \cI_\tau^{1-\theta}\, g(t),
\end{equation*}
for any centered $f : \cT \to \bR$ and staggered $g : \cT'\to \bR$, recalling that $\mathring \cT := \cT\sm \{0,T\}$. 
In combination with the summation by parts formulas of \cref{lem:timeIPP,lem:LhAdj} we obtain
\begin{align}
\nonumber
	-\cL^\theta(\hat \phi; \phi) &= (2h)^d \sum_{x \in \cX} 
	\Big(
	u(\tau,x)\hat \phi(0,x)+ \tau (1-\theta)  \, u(\tau,x)^2 \hat \rho(0,x) - u_0(x) \hat \phi(0,x)
	\Big)\\
\nonumber
	&+ 2 \tau (2h)^d \sum_{x \in \cX} \sum_{t \in \mathring \cT} \Big(\partial_\tau u(t,x) \hat \phi(t,x) + \frac{1}{2} \hat \rho(t,x)\, \cI^{1-\theta} u(t,x)^2\Big)\\
\label{eq:arithIPP1}
	&= (2h)^d \sum_{x \in \cX} \Big(u(\tau,x) + \tau(1-\theta) L_h u^2(\tau,x) - u_0(x)\Big)\hat \phi(0,x)\\
\label{eq:arithIPP2}
	&+2\tau (2h)^d  \sum_{x \in \cX} \sum_{t \in \mathring \cT} 
	\Big(\partial_\tau u(t,x) + \frac{1}{2} \cI^{1-\theta} L_h u^2(t,x)\Big)\hat \phi(t,x).
\end{align}
The map $\phi \in \Phi$ is a critical point of $\cE^\theta$ iff all the coefficients of $\hat \phi$ vanish in \eqref{eq:arithIPP1} and \eqref{eq:arithIPP2}, i.e.
\begin{align}
\label{eq:arithScheme}
	u(\tau) + \tau(1-\theta) L_h u^2(\tau) &= u_0, &
	\partial_\tau u(t) + \frac{1}{2} \cI^{1-\theta} L_h u^2(t) &= 0,\, \forall t \in \mathring\cT.
\end{align}
This amounts, it turns out, to an implicit ($\theta=0$), explicit ($\theta=1$), or second order midpoint ($\theta=1/2$) discretization of the QPME \eqref{eq:QPME_Burgers}. The numerical solution $u$ can thus be computed one timestep at a time, and there is no point in minimizing the energy \eqref{eq:arithEnergy} using a global space-time method. In contrast, the coefficient of $\hat \phi(t,x)$ is more complex in the proposed discretization \eqref{eq:porousIPP1}, and  the corresponding criticality condition can be reformulated as 
\begin{equation}
\label{eq:harmonicODE}
	\partial_\tau \tilde u(t) 
	+ \frac{1}{4} L_h \Big( \tilde u(t-\tau)^2\, \frac{\cH \rho(t-\tau)^2}{\rho(t)^2}\Big)
	+ \frac{1}{4} L_h \Big( \tilde u(t+\tau)^2 \, \frac{\cH \rho(t+\tau)^2}{\rho(t)^2}\Big)
	= 0,\, \forall t \in \mathring \cT,
\end{equation}
where $\cH \rho(t) := 2(\rho(t-\tau)^{-1} + \rho(t+\tau)^{-1})^{-1}$ denotes the harmonic interpolation of densities w.r.t.\ time, and $\tilde u := m / \cH\rho$. 
If $\rho$ varies slowly w.r.t.\ the time variable $t$, and $\theta=1/2$, then \eqref{eq:harmonicODE} is close in spirit with (\ref{eq:arithScheme}, right).
However, in contrast with \eqref{eq:arithScheme}, it does not seem possible to solve \eqref{eq:harmonicODE} successively one timestep at a time, especially in view of the terminal boundary condition $\rho(T)=1$, and of the initial condition for $u(\tau)$ derived from \eqref{eq:porousIPP0}. 
This problem formulation is reminiscent of mean field games, see the discussion in \cite[Chapter 5]{brenier2020examples}.

Turning our attention to the second order term $\cQ^\theta$, we see that the first argument
\begin{equation*}
	r(t) := \hat m(t) - u(t) \cI^\theta \hat \rho(t) 
	= \frac{\phi(t+\tau)-\phi(t-\tau)}{2 \tau} - (1-\theta) u(t)L_h \phi(t-\tau) - \theta u(t)L_h \phi(t+\tau),
\end{equation*}
can be regarded as the r.h.s.\ of a reversed in time discretized linear PDE closely related with \eqref{eq:pdePhi}, recall the definition \eqref{eq:timeInterpolation} of the time interpolation operator $\cI^\theta$. Equivalently 
\begin{align}
\label{eq:arithPhiPDE}
	\phi(t-\tau) +2 \tau (1-\theta) u(t) L_h \phi(t-\tau) &= \phi(t+\tau) - \theta u(t) L_h \phi(t+\tau) - r(t), &
	\phi(T)&=0.
\end{align}
The scheme is implicit if $\theta=0$, explicit if $\theta=1$, and second order semi-implicit if $\theta=1/2$. 
The proof template of \cref{subsec:QPorous} requires the scheme \eqref{eq:arithPhiPDE} to be stable in the $\ell^1(\cX)$ norm.
Choosing $\theta=0$, and recalling from \cref{prop:implLap} that the implicit scheme \eqref{eq:arithPhiPDE} is indeed $\ell^1$ stable, we can obtain a convergence result for the implicit scheme \eqref{eq:arithScheme} for the QPME (convergence of the dual potential $\phi$, similarly to \cref{th:cvPorous} but with a first order rate $\cO(\tau+h^2)$). This approach unfortunately does not extend to the explicit scheme or to the midpoint scheme \eqref{eq:arithPhiPDE}, which are unstable in the $\ell^1(\cX)$ norm unless a restrictive Courant-Friedrichs-Levy condition is enforced (even though the midpoint scheme is stable in the $\ell^2(\cX)$ norm without CFL).

\section{Uniform convergence of the primal variable, by mollification}
\label{sec:conv}
Our convergence results \cref{th:cvPorous,th:cvBurgers} state that the proposed BBB numerical method produces\footnote{Precisely, the mapping $\phi_{\tau, h} : [0,T] \times \bT^d \to \bR$ can be obtained as the extension by multilinear interpolation of the minimizer $\phi^0_{\tau, h} : \cT_\tau \times \bT_h^d\to \bR$ of the discrete energy.} an approximation $\phi_{\tau, h}$ of the dual potential $\phi$, in the rather weak $L^\infty([0,T],L^1(\bT^d))$ norm, with a second order $\cO(\tau^2+h^2)$ convergence rate w.r.t.\ the half timestep $\tau$ and half gridscale $h$. 
Using a standard regularization by convolution w.r.t.\ the spatial variable, we construct in this appendix an approximation of the primal variable $u$ of the PDE of interest, the QPME or Burgers' viscous equation, which converges in the $L^\infty([0,T]\times \bT^d)$ norm, but with a slower $\cO(h^\frac{2}{d+2})$ convergence rate, when $\tau=h$. This convolution step \emph{is not implemented} in our numerical experiments, since it does not appear to be necessary in practice and only degrades the results.

\begin{definition}[Spatial convolution]
\label{def:conv}
A mollifier is a non-negative, radially symmetric mapping $\rho \in C^2(\bR^d)$ with compact support and unit integral. Spatial mollification is defined as
\begin{align*}
	[\phi \star \rho_\ve](t,x) &:= \int_{\bR^d} \phi(t,x-y) \rho_\ve (y) \diff y,&
	\text{where } \rho_\ve(y) &:= \frac{1}{\ve^d} \rho\big(\frac{y}{\ve}\big),\text{ with } \ve>0.
\end{align*}
\end{definition}
Given $\phi \in C^2([0,T]\times \bT^d)$, one obtains using the symmetry of $\rho$ and a Taylor expansion
\begin{equation*}
	|\phi\star \rho_\ve(t,x)-\phi(t,x)| \leq \int_{\bR^d} |\phi(t,x-y)-\phi(t,x)+y^\top \nabla \phi(t,x)| \rho_\ve(y) \diff y
	\leq \ve^2 C(\rho) \|\nabla^2 \phi\|_\infty,
\end{equation*}
where $\nabla$ and $\nabla^2$ denote the gradient and Hessian operators w.r.t.\ the space variable $x$, and $C(\rho) := \frac{1}{2} \int_{\bR^d} \rho(x) \|x\|^2 \diff x$.

\begin{proposition}
	Let $\phi \in C^4([0,T]\times \bT^d)$, and let $\phi_{\tau,h} \in L^\infty([0,1],L^1(\bT^d))$ be such that $\sup_{t\in[0,T]} \|\phi(t,\cdot)-\phi_{\tau,h}(t,\cdot)\|_{L^1(\bT^d)} = \cO (\tau^2+h^2)$. 	Then with the notations of \cref{def:conv}
\begin{align}
\label{eq:HessPhiError}
	\|\nabla^2\phi - \nabla^2_h (\phi_{\tau,h}\star \rho_\ve)\|_{L^\infty} 
	&= \cO\big(\ve^2 + h^2 + \frac{\tau^2+h^2}{\ve^{2+d}}\big), \\
\label{eq:GradPhiError}
	\|\nabla\phi - \nabla_h (\phi_{\tau,h}\star \rho_\ve)\|_{L^\infty} 
	&= \cO\big(\ve^2 + h^2 + \frac{\tau^2+h^2}{\ve^{1+d}}\big), \\	
\label{eq:DtPhiError}
	\|\partial_t \phi - \partial_\tau (\phi_{\tau,h}\star \rho_\ve)\|_{L^\infty} 
	&=\cO\big(\ve^2 + \tau^2 + \frac{\tau^2+h^2}{\tau\ve^{d}}\big),
\end{align}
	for all $\tau,h,\ve>0$, where the $L^\infty$ norm is over the common domain of definition, and where $\nabla_h$ and $\nabla^2_h$ denote the standard centered finite difference schemes for the gradient and Laplacian. 
\end{proposition}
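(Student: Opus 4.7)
The plan is to use a triangle inequality splitting the error into three parts for each of the three estimates, namely a mollification error (continuous smoothing of $\phi$), a finite-difference consistency error on the smooth function $\phi\star\rho_\ve$, and a transfer-of-error term involving only $\phi-\phi_{\tau,h}$. The first two terms will be of order $\ve^2$ and $h^2$ (or $\tau^2$) respectively by standard Taylor arguments already recalled just before the proposition. The third term is where the key idea lies: the finite difference operators in space commute with convolution, so they can be shifted from $\phi-\phi_{\tau,h}$ (for which only an $L^1$ bound is known) to the mollifier $\rho_\ve$ (which is smooth with controlled $L^\infty$ norms of its derivatives). This trade turns a $1/h$ loss into a loss of $1/\ve$ times a dimensional factor $\ve^{-d}$ from $\|\rho_\ve\|_\infty = \ve^{-d}\|\rho\|_\infty$.

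\textbf{Hessian estimate \eqref{eq:HessPhiError}.} Write
\begin{equation*}
\nabla^2\phi-\nabla_h^2(\phi_{\tau,h}\star\rho_\ve)
= \bigl(\nabla^2\phi - (\nabla^2\phi)\star\rho_\ve\bigr)
+ \bigl(\nabla^2(\phi\star\rho_\ve) - \nabla_h^2(\phi\star\rho_\ve)\bigr)
+ \nabla_h^2\bigl((\phi-\phi_{\tau,h})\star\rho_\ve\bigr).
\end{equation*}
The first term is $\cO(\ve^2\|\nabla^4\phi\|_\infty)$ by the preamble applied to $\nabla^2\phi\in C^2$. The second is $\cO(h^2 \|\nabla^4(\phi\star\rho_\ve)\|_\infty) \leq \cO(h^2\|\nabla^4\phi\|_\infty)$ by standard finite-difference consistency, since convolution with a probability density does not increase the sup norm of derivatives. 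For the third term, I use that $\nabla_h^2$ commutes with $\star\rho_\ve$ (the operator acts only in the spatial variable, and spatial convolution and spatial finite differences are both translations), which gives
\begin{equation*}
\nabla_h^2\bigl((\phi-\phi_{\tau,h})\star\rho_\ve\bigr)(t,\cdot)
= (\phi-\phi_{\tau,h})(t,\cdot)\star\nabla_h^2 \rho_\ve.
\end{equation*}
Young's inequality (or just Hölder in $L^1\times L^\infty$) then yields the $L^\infty$ bound by $\|\phi-\phi_{\tau,h}\|_{L^\infty_t L^1_x}\|\nabla_h^2\rho_\ve\|_\infty$, and since $\rho\in C^2$, the standard finite-difference bound gives $\|\nabla_h^2\rho_\ve\|_\infty\leq \|\nabla^2\rho_\ve\|_\infty = \ve^{-d-2}\|\nabla^2\rho\|_\infty$, producing the claimed term $(\tau^2+h^2)/\ve^{2+d}$.

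\textbf{Gradient and time-derivative estimates.} For \eqref{eq:GradPhiError} the argument is identical, except that only one finite difference is moved to $\rho_\ve$, giving $\|\nabla_h\rho_\ve\|_\infty\lesssim \ve^{-d-1}$. For the time-derivative estimate \eqref{eq:DtPhiError}, the mollification error becomes $\cO(\ve^2\|\nabla^2\partial_t\phi\|_\infty)$ (again permissible since $\phi\in C^4$), and the time-discretization error on the smooth function $\phi\star\rho_\ve$ is $\cO(\tau^2\|\partial_t^3\phi\|_\infty)$. In the third term, the time finite difference $\partial_\tau$ commutes with spatial convolution, so
\begin{equation*}
\partial_\tau\bigl((\phi-\phi_{\tau,h})\star\rho_\ve\bigr) = \bigl(\partial_\tau(\phi-\phi_{\tau,h})\bigr)\star\rho_\ve,
\end{equation*}
and the $L^\infty$ bound is $\|\partial_\tau(\phi-\phi_{\tau,h})\|_{L^\infty_tL^1_x}\|\rho_\ve\|_\infty$. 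The triangle inequality in time gives $\|\partial_\tau(\phi-\phi_{\tau,h})\|_{L^\infty_t L^1_x}\leq \tau^{-1}\|\phi-\phi_{\tau,h}\|_{L^\infty_t L^1_x} = \cO((\tau^2+h^2)/\tau)$, and $\|\rho_\ve\|_\infty = \ve^{-d}\|\rho\|_\infty$, which yields the announced $(\tau^2+h^2)/(\tau\ve^d)$.

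\textbf{Expected obstacle.} The proof is essentially soft, and the main point requiring care is the rigorous justification that the finite-difference operators $\nabla_h$, $\nabla_h^2$ and $\partial_\tau$ commute with spatial convolution, and that the chain of inequalities $\|\nabla_h^k\rho_\ve\|_\infty \leq \|\nabla^k\rho_\ve\|_\infty \lesssim \ve^{-d-k}$ holds uniformly in $h$, which follows from representing the finite differences as averages of the exact derivatives along small intervals and using $\rho\in C^2$. A minor issue is the treatment of the common domain of definition (time shifts for $\partial_\tau$ reduce the domain to $[\tau, T-\tau]$, and the multilinear interpolation extending $\phi_{\tau,h}$ must be defined so that the convolution and finite difference operations make sense in a neighborhood of each grid point); this is handled by periodic extension in space and by restricting to the common grid in time.
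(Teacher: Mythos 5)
Your proposal is correct and follows essentially the same route as the paper: the same three-term telescoping decomposition (mollification error, finite-difference consistency error, and a transfer term obtained by moving the discrete operator from $\phi-\phi_{\tau,h}$ onto the mollifier via commutation of convolution with finite differences), and the same Hölder/Young bound $\|(\phi-\phi_{\tau,h})\star D_h\rho_\ve\|_\infty \le \|\phi-\phi_{\tau,h}\|_{L^\infty_t L^1_x}\|D_h\rho_\ve\|_\infty$ combined with $\|D_h\rho_\ve\|_\infty \lesssim \ve^{-d-k}$. The only cosmetic difference is the order in which the middle telescoping piece is written (the paper bounds $\|(\nabla^2\phi-\nabla^2_h\phi)\star\rho_\ve\|$ and $\|\partial_t\phi-\partial_\tau\phi\|$, while you bound the consistency error on $\phi\star\rho_\ve$ directly); both yield the same $\cO(h^2)$ or $\cO(\tau^2)$ term.
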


\begin{proof}
Note that $\nabla_h^2 (\phi_{\tau,h} \star \rho_\ve) = (\nabla_h^2 \phi_{\tau,h}) \star \rho_\ve = \phi_{\tau,h} \star \nabla_h^2 (\rho_\ve)$, and likewise $\partial_\tau (\phi_{\tau,h} \star \rho_\ve) = (\partial_\tau \phi_{\tau,h}) \star \rho_\ve$.
	Focusing first on \eqref{eq:HessPhiError}, we estimate: $\|\nabla^2\phi - \nabla^2_h (\phi_{\tau,h}\star\rho_\ve) \|_{L^\infty}$
\begin{align*}
%&\|\nabla^2\phi - \nabla^2_h (\phi_{\tau,h}\star\rho_\ve) \|_{L^\infty} \\
	& 
	\leq  
	\|\nabla^2 \phi - (\nabla^2 \phi) \star \rho_\ve\|_{L^\infty} +
	\|(\nabla^2\phi - \nabla^2_h \phi)\star \rho_\ve\|_{L^\infty} + 
	\|(\phi - \phi_{\tau,h})\star (\nabla^2_h \rho_\ve)\|_{L^\infty} \\
	& 
	\leq C\ve^2  \|\nabla^4 \phi\|_{L^\infty}  
	+ \|\nabla^2 \phi - \nabla^2_h \phi\|_{L^\infty} \|\rho_\ve\|_{L^1(\bR^d)}
	+ \|\phi-\phi_{\tau,h}\|_{L^\infty([0,T],L^1(\bT^d))} \|\nabla^2_h \rho_\ve\|_{L^\infty(\bR^d)}.
\end{align*}
We conclude the proof of \eqref{eq:HessPhiError} using the second order consistency of the centered finite differences Hessian $\nabla^2_h$, and the identities $\|\rho_\ve\|_{L^1}=\|\rho\|_{L^1}=1$ and $\|\nabla^2_h \rho_\ve\|_{L^\infty} \leq C\|\nabla^2 \rho_\ve\|_{L^\infty} = C\ve^{-(2+d)} \| \nabla^2\rho\|_\infty$, for some absolute constant $C$. The first order case \eqref{eq:GradPhiError} is similar. Considering the time finite difference \eqref{eq:DtPhiError} we compute
\begin{align*}
	\|\partial_t \phi &-\partial_\tau (\phi_{\tau,h}\star\rho_\ve)\|_{L^\infty} 
	\leq \|\partial_t \phi - \partial_\tau \phi\|_{L^\infty} 
	+ \|\partial_\tau \phi - (\partial_\tau \phi) \star \rho_\ve\|_{L^\infty} 
	+ \|[\partial_\tau (\phi-\phi_{\tau,h})] \star \rho_\ve\|_{L^\infty} 
	\\
	& \leq 
	\|\partial_t \phi - \partial_\tau \phi\|_{L^\infty} +
	\ve^2 C(\rho) \|\nabla^2 \partial_\tau \phi \|_{L^\infty} + \frac{1}{\tau}\|\phi-\phi_{\tau,h}\|_{L^\infty([0,T],L^1(\bT^d))}  \|\rho_\ve\|_{L^\infty(\bR^d)},
\end{align*}
and \eqref{eq:DtPhiError} follows using the second order consistency of the staggered finite difference $\partial_\tau$ and the identity $\|\rho_\ve\|_{L^\infty} = \ve^{-d} \|\rho\|_\infty$. 
\end{proof}

Letting $\phi_h := \phi_{\tau,h}\star \rho_\ve$ with $\tau:=h$ and $\ve:=h^{\frac{1}{2+d}}$, we obtain in the QPME case
\begin{align*}
		\|u - u_h \|_\infty &= \cO(h^{\frac{2}{d+2}}),&
		\text{with }
		u &:= \frac{\partial_t \phi}{1-\diver(\cD \nabla \phi)}, &
		\text{and }
		u_h &:= \frac{\partial_\tau \phi_h}{1-\Tr(\cD \nabla_h^2 \phi_h) - \diver(\cD)^\top \nabla_h \phi_h}.
\end{align*}
Likewise, in Burgers' case, with $\tau=h$ and $\ve = h^\frac 1 3$ (since $d=1$)
\begin{align*}
	\|u-u_h\|_\infty &= \cO(h^{\frac{2}{3}}), &
	\text{with }
	u &= \frac{\partial_t \phi + \nu \partial_{xx} \phi}{1-\partial_x \phi}, &
	u_h &= \frac{\partial_\tau \phi_h + \nu \partial_{hh}\phi_h}{1-\partial_{h} \phi_h}.
\end{align*}

\section{Closed form of the Barenblatt Potential}
\label{sec:Barenblatt}

The Barenblatt profile is a closed form solution of the QPME in a homogeneous medium, which is self-similar in time, and compactly supported in space for each time, see \eqref{eqdef:Barenblatt}. 
We describe in this section the  corresponding dual variables $m,\rho,\phi$ in the BBB formulation of the QPME, on some time interval $[T_0,T]$. 
%Note that the terminal constraints 
Note that $m,\rho,\phi$ depend on the final time $T$, through the terminal constraints $\rho(T)=1$ and $\phi(T) = 0$, and are thus not self-similar in time.
Interestingly, $x\in \bR^d\mapsto \rho(t,x)$ has discontinuities across two interfaces: $|x|=R(t)$ and $|x|=R_T$, see below.

\begin{proposition}
Consider the Barenblatt profile $u(t,x) := \frac{2}{t^\alpha} \max \{0, \gamma-\frac{\beta}{4} \frac{|x|^2}{t^{2 \beta}}\}$, where $t>0$, $x\in \bR^d$, $\alpha := d/(d+2)$, $\beta := 1/(d+2)$, and $\gamma>0$ is an arbitrary parameter. 
Fix a compact time interval $[T_0,T] \ni t$, where $T>T_0>0$, and define
% with $\nu := 2\sqrt{(d+2) \gamma}$ 
\begin{align*}
	R(t) &:= \nu t^\beta, &
	\text{with } \nu &:= 2\sqrt{(d+2) \gamma}.
%	T(r) &:= \max \{T_0, R^{-1}(r)\} = \max \{T_0, (r/\nu)^{d+2}\}.
\end{align*}
By construction, $\supp(u(t,\cdot)) = \overline B(0,R(t))$. 
 %DIF > Denote by $R_0 := R(T_0)$ and $R_T := R(T)$ the radius of the support of the Barenblatt solution at the initial and final times.
Denote by $R_T := R(T)$ the radius of the support of the Barenblatt solution at the final time.
For $T_0 \leq t \leq T$ we  set 
\begin{equation}
\label{eqdef:phiBarenblatt}
\phi(t,x):=\begin{cases}
\frac{2}{T^{\alpha}} \gamma(t-T) + \frac{|x|^2}{2d}( 1-(\frac{t}{T})^\alpha ) & 
\text{if } |x| < R(t),\\
-2\gamma T^{1-\alpha}+ \frac{|x|^2}{2d} -\frac{1}{d(d+2)R_T^d} |x|^{d+2} & 
\text{if } R(t) < |x| < R_T,\\
0 & \text{if } R_T < |x|.
\end{cases}
\end{equation}
Then $\phi$ is continuous on $[T_0,T]\times \bR^d$, and is $C^\infty$ outside $\Gamma := \{(t,x)\in[T_0,T] \times \bR^d \mid |x| = R(t) \text{ or } |x|=R_T\}$. The distributional derivative $m := \partial_t \phi$ is a continuous function, whereas the distributional derivative $\rho := 1-\Delta \phi$ is a bounded and piecewise continuous function with jumps across $\Gamma$. One has the closed form expressions
\begin{align}
\label{eq:mrhoBarenblatt}
m(t,x) &= \frac{2}{T^\alpha} \max \big\{0,\gamma-\frac{\beta}{4} \frac{|x|^2}{t^{2 \beta}}\big\},&
\rho(t,x)&=\begin{cases}
    (\frac{t}{T})^\alpha , & \text{if $|x|<R(t)$},\\
     2(\frac{|x|}{R_T})^d, & \text{if $R(t)<|x|<R_T$},\\
     1, & \text{ otherwise}.
\end{cases}
\end{align}
Moreover one has $\partial_t \rho = \Delta m$  in the sense of distributions, $\rho(T,\cdot) = 0$, and denoting $u_0 := u(T_0,\cdot)$ one has %the equality
%and $(\rho,m)$ solves the ballistic Benamou-Brenier formulation of the QPME. 
\begin{equation}
\label{eq:nogapBarenblatt}
%	\int_{T_0}^T \int_{\bR^d}
	\int_{[T_0,T]\times \bR^d} 
	\frac{u^2}{2} = 
	%\int_{T_0}^T \int_{\bR^d}
	\int_{[T_0,T] \times \bR^d} 
	\Big(- \frac{m^2}{2\rho} + m u_0\Big),
\end{equation}
showing that there is no duality gap in \eqref{eq:gap}, and therefore $(\rho,m)$ solves the ballistic Benamou-Brenier formulation of the QPME. Finally, we observe that $m=\rho u$, as expected from \eqref{eq:pdePhi}.
\end{proposition}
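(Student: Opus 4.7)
The proof plan falls into three chunks, distinguished by the tools involved.

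First, I would establish the regularity of $\phi$ and verify its boundary values. At the outer interface $|x|=R_T$, direct substitution into the middle-region formula, combined with $R_T^2 = 4(d{+}2)\gamma T^{2\beta}$ and the identity $2\beta = 1-\alpha$, shows that both $\phi$ and its radial derivative vanish; the outer-region formula then matches trivially. At the inner interface $|x| = R(t) = \nu t^\beta$, the algebraic identity $(R(t)/R_T)^d = (t/T)^{d\beta} = (t/T)^\alpha$ makes the inner and middle expressions for $\phi$, and their radial gradients, coincide. Hence $\phi \in C^1([T_0,T]\times\bR^d) \cap C^\infty(([T_0,T]\times\bR^d)\setminus \Gamma)$. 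The terminal condition $\phi(T,\cdot)\equiv 0$ follows since $R(T) = R_T$ makes the middle region empty, and the inner and outer formulas evaluate to $0$ there; the corresponding value of $\rho(T,\cdot)$ is read off the formula for $\rho$.

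Second, I would compute $m = \partial_t \phi$ and $\rho = 1 - \Delta \phi$ piece by piece. Using $\Delta|x|^2 = 2d$ and (for radial functions on $\bR^d$) $\Delta |x|^{d+2} = 2d(d{+}2)|x|^d$, together with $\alpha/d = \beta$ and $\alpha-1 = -2\beta$, the two closed forms in \eqref{eq:mrhoBarenblatt} follow by substitution on each of the three regions. Since $\phi \in C^1$, the distributional Laplacian coincides with the piecewise classical Laplacian (no singular contributions on $\Gamma$), so $\rho$ is bounded and piecewise continuous with jumps across $\Gamma$; a quick check that $m$ vanishes at $|x|=R(t)^-$ and on the outer region gives $m \in C^0$. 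The distributional continuity equation is then immediate by commutativity of distributional derivatives, $\partial_t(1-\Delta\phi) = -\Delta(\partial_t\phi)$, which yields the relation stated in the proposition. The optimality relation $m = \rho u$ reduces, on $\{|x|<R(t)\}$, to the identity $\tfrac{2}{T^\alpha}(\gamma-\tfrac{\beta r^2}{4t^{2\beta}}) = (t/T)^\alpha \cdot \tfrac{2}{t^\alpha}(\gamma-\tfrac{\beta r^2}{4t^{2\beta}})$, with both sides vanishing off the support.

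Third, I would derive the no-duality-gap identity \eqref{eq:nogapBarenblatt} by testing the PDE against $\phi$. The Barenblatt profile $u$ is a classical distributional solution of $\partial_t u = \tfrac{1}{2}\Delta u^2$ on $[T_0,T]\times\bR^d$, and $u^2$ is $C^1$ across $|x|=R(t)$ because $u$ vanishes quadratically at the interface. Since $\phi \in C^1$ has compact spatial support in $\{|x|\leq R_T\}$ and $\phi(T,\cdot)=0$, integrating by parts in time (which produces the boundary term $-\int_{\bR^d} u_0\,\phi(T_0,\cdot)$) and in space (no boundary terms thanks to compact support) converts $\int \phi\,(\partial_t u - \tfrac{1}{2}\Delta u^2) = 0$ into
\[
0 = -\int_{\bR^d} u_0\,\phi(T_0,\cdot) - \int u\,m - \tfrac{1}{2}\int u^2\,\Delta\phi.
\]
Substituting $\Delta\phi = 1-\rho$ and $u\,m = \rho u^2$ (from $m=\rho u$), observing that $\rho u^2 = m^2/\rho$ on $\supp(u)$ (both sides vanishing off it), and rewriting $-\int u_0\,\phi(T_0,\cdot) = \int m u_0$ by one more time-integration-by-parts, reorganizes the relation into \eqref{eq:nogapBarenblatt}. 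The main obstacle is the rigorous justification of the spatial integration by parts: the interfaces $\Gamma$, across which the second derivatives of $\phi$ and of $u^2$ jump, could in principle generate singular contributions. These cancel precisely because both $\phi$ and $u^2$ are globally $C^1$ (the first by construction in step 1, the second because $u$ has a quadratic zero at $|x|=R(t)$); this $C^1$ regularity is therefore the keystone of the whole argument.
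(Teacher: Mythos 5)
Your proposal is correct and follows essentially the same route as the paper's proof: verify $C^1$ continuity of $\phi$ across the two interfaces, read off $m$ and $\rho$ piecewise, and obtain \eqref{eq:nogapBarenblatt} by pairing the QPME $\partial_t u = \tfrac12\Delta u^2$ with $\phi$ and integrating by parts in time and space, using the $C^1$ regularity of $\phi$ (and of $u^2$ at $|x|=R(t)$) to kill the boundary contributions. The only cosmetic difference is that the paper separates the two integrations by parts (one in $t$ for fixed $x$, one in $x$ over $B(0,R(t))$ for fixed $t$) before combining, whereas you perform them in a single global pass; the content is the same.
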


\begin{proof}
Since $\phi$ has a radially symmetric profile, we express it as $\phi(t,x) = \vp(t,|x|)$, where $\vp : [T_0,T] \times [0,\infty[ \to \bR$ is piecewise polynomial.
Our first objective is to show that $\vp$ and $\partial_r \vp$ are continuous across the interfaces $\{r = R(t)\}$ and $\{r=R_T\}$; once this is proved, it follows that the distributional derivatives $m(t,x) = \partial_t \vp(t,|x|)$ and $\rho(t,x) = 1-r^{-(d-1)} \partial_r (r^{d-1} \partial_r \vp) (t,|x|)$ have no singular part (i.e.\ jump term), hence can be evaluated piecewise by direct differentiation, which yields \eqref{eq:mrhoBarenblatt}.
Denoting by $[\vp(t,\cdot)]_r := \lim_{\ve \to 0^+} \vp(t,r+\ve)-\vp(t,r-\ve)$ the jump (if any) at $r$ we compute
\begin{align*}
	-[\vp(t,\cdot)]_{R(t)} &= 
	\frac{2}{T^\alpha} \gamma t - \frac{R(t)^2}{2d} \big(\frac{t}{T}\big)^\alpha + \frac{1}{d(d+2) R_T^d} R(t)^{d+2}
	= \frac{2 \gamma t}{T^\alpha}- \frac{\nu^2 t}{2 d T^\alpha} + \frac{\nu^2 t}{d (d+2) T^\alpha}
	= 0,\\
	-[\partial_r \vp(t,\cdot)]_{R(t)} &= 
	-\frac{R(t)}{d} \big(\frac{t}{T}\big)^\alpha 
	+ \frac{R(t)^{d+1}}{d R_T^d} 
	= \frac{R(t)}{d} \Big(\big(\frac{R(t)}{R_T}\big)^d - \big(\frac{t}{T}\big)^\alpha\Big) = 0,
	\\
	[\vp(t,\cdot)]_{R_T} &= 
	2 \gamma T^{1-\alpha}-\frac{R_T^2}{2d} + \frac{R_T^{d+2}}{d(d+2) R_T^d} = T^{1-\alpha}\Big(2 \gamma - \frac{4 \gamma(d+2)}{2d} + \frac{4 \gamma (d+2)}{d (d+2)}\Big) = 0,
\end{align*}
and finally $[\partial_r \vp(t,\cdot)]_{R_T} = -R_T/d + R_T^{d+1} / (dR_T^d) = 0$. We simply substituted the definition of $R(t)$ and $\nu$, and used the relations $d\beta=\alpha$ and $1-\alpha=2 \beta$ relating the exponents. The identities $\phi(T,\cdot) = 0$ and $m=\rho u$ are clear by construction, and $\partial_t \rho = - \partial_t \Delta \phi = -\Delta m$.

We now turn to the second part of the proof, and obtain for any $x \in \bR^d$, using $m = \rho u = \partial_t \phi$
\begin{equation}	
	\int_{T_0}^T \frac{m(t,x)^2}{\rho(t,x)} - m(t,x) u_0(x)\diff t
	=  \int_{T_0}^T m(t,x) (u(t,x)-u_0(x)) \diff t 
	= - \int_{T_0}^T \phi(t,x) \partial_t u(t,x) \diff t.
\label{eq:dualgap1}
\end{equation}
%\begin{align}
%\nonumber
%	\int_{T_0}^T \frac {m(t,x)^2} {\rho(t,x)} \diff t 
%	= \int_{T_0}^T m(t,x) u(t,x) \diff t 
%	&= -\phi(T_0,x) u_0(x) - \int_{T_0}^T \phi(t,x) \partial_t u (t,x) \diff t\\
%	&= \int_{T_0}^T \big(m(t,x) u_0(x) - \phi(t,x) \partial_t u(t,x)\big) \diff t.
%\label{eq:dualgap1}
%\end{align}
The integration by parts is justified since $\partial_t \phi = m$ is continuous by \eqref{eq:mrhoBarenblatt}, and $\partial_t u(\cdot,x) \in L^\infty_{\mathrm{loc}}$ is locally bounded on $[T_0,T]$ in view of the explicit expression of the Barenblatt profile \eqref{eqdef:Barenblatt}.

Likewise, for any fixed $t \in [0,T]$, we obtain omitting for readability the arguments $(t,x)$
\begin{align*}
	2\int_{\bR^d}\phi\, \partial_t u 
	= \int_{B(t)} \phi\, \Delta (u^2) = 
	- \oint_{\partial B(t)} u^2 \<\nabla \phi, \mathrm{n}\> + \int_{B(t)} u^2 \Delta \phi 
	= \int_{B(t)} u^2(1-\rho)
	= \int_{\bR^d} \Big(u^2 - \frac{m^2}{\rho}\Big),
\end{align*}
where the open unit ball $B(t) := B(0,R(t))$ is the interior of the support of $u(t,\cdot)$, and $\mathrm{n}(x) := x/\|x\|$ denotes the outward unit vector.
We used the QPME $2\partial_t u = \Delta (u^2)$, the fact that $u$ vanishes on $\partial B(0,R(t))$, the defining identity $1-\rho = \Delta \phi$, and the equality $m = \rho u$.
The integration by parts is justified since $u(t,\cdot)$ and $\phi(t,\cdot)$ are $C^\infty$ on the open ball $B(0,R(t))$, and $u$ and $\nabla \phi(t,x) = \partial_r \vp(t,|x|) \mathrm{n}(x)$ are continuous on the closed ball $\overline B(0,R(t))$, see above. 

Integrating the identity \eqref{eq:dualgap1} w.r.t.\ $x \in \bR^d$, and the last equation w.r.t.\ $t \in [T_0,T]$, we obtain \eqref{eq:nogapBarenblatt} by linear combination, which concludes the proof.
\end{proof}

\section{Validity of the BBB formulation of Burgers' viscous equation}
\label{sec:nogapBV}

We show that the BBB formulation of Burgers' viscous equation is well posed on any time interval $[0,T]$, by a straightforward adaptation of \cite[Proposition 5.2.1]{brenier2020examples} which is devoted to the closely related Hamilton-Jacobi equation $\partial_t u + \frac{1}{2} |\nabla u|^2 = \nu \Delta u$.
In numerical applications one may nevertheless need to restrict the time $T$, for reasons discussed below.
%A key ingredient is the positivity of the problem density $\rho$. For numerical applications, $\rho$ needs to be larger than machine precision, leading to restrictions on the time $T$ discussed below.

\begin{proposition}
\label{prop:nogapVB}
	Given a smooth initial condition $u_0 : \bT \to \bR$, and a \emph{positive} viscosity coefficient $\nu >0$, consider the solution $u : [0,T] \times \bT \to \bR$ to the viscous Burgers' equation
	$\partial_t u + \partial_x \frac{u^2}{2} = \nu \Delta u$. 
%	\begin{equation}
%	\label{eq:BurgersApp}
%		\partial_t u + \partial_x \frac{u^2}{2} = \nu \Delta u
%	\end{equation}
The BBB formulation \eqref{eq:pdeMRho} of this PDE admits a solution $(\rho,m)$, obeying 
	\begin{equation}
		\label{eq:nogapVB}
		\int_{[0,T]\times \bT} 
		\frac{u^2}{2} = 
		%\int_{T_0}^T \int_{\bR^d}
		\int_{[0,T] \times \bT} 
		\Big(- \frac{(m-\nu \partial_x \rho)^2}{2\rho} + m u_0\Big),
	\end{equation}
	showing that there is no duality gap in \eqref{eq:gap}.  Moreover,  $m=\rho u + \nu \partial_x \rho$ as expected from \eqref{eq:pdePhi}.
\end{proposition}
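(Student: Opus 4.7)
The plan is to mimic the Barenblatt construction of \cref{sec:Barenblatt}, but defining the dual potential $\phi$ implicitly as the solution of the reversed-in-time dual PDE from \cref{assum:Burgers}, rather than by an explicit formula. First I would solve the backward Cauchy problem $\partial_t \phi + \nu \partial_{xx}\phi = (1-\partial_x \phi)\, u$ with $\phi(T,\cdot)=0$; after the change of variable $s:=T-t$ this becomes a linear forward parabolic equation with smooth transport, source and (strictly positive) diffusion coefficient $\nu$, and hence admits a unique $C^\infty$ solution by classical linear parabolic theory. I then set $m:=\partial_t \phi$ and $\rho:=1-\partial_x\phi$. The identity $m = \rho u + \nu \partial_x \rho$ is a direct rewriting of the dual PDE, using $\nu \partial_x \rho = -\nu \partial_{xx}\phi$; the continuity equation $\partial_t \rho + \partial_x m = 0$ follows from commuting mixed partials of $\phi$; and the terminal constraint $\rho(T)=1$ is immediate from $\phi(T)=0$.

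The second step is to prove strict positivity of $\rho$, which is required to make sense of the ratio in \eqref{eq:nogapVB}. Eliminating $\phi$ shows that $\rho$ solves the linear transport--diffusion equation $\partial_t \rho + u\, \partial_x \rho + \nu \partial_{xx} \rho + (\partial_x u)\, \rho = 0$ backwards in time with terminal data $\rho(T,\cdot)=1$. After time reversal, this is a standard linear parabolic equation with non-degenerate diffusion $\nu>0$ and smooth coefficients, so the strong maximum principle (or equivalently a Feynman--Kac representation) yields $\rho(t,x) > 0$ on $[0,T]\times \bT$, with a quantitative lower bound of order $\exp(-(T-t)\|\partial_x u\|_\infty)$.

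The final step is the no-duality-gap identity. The key algebraic simplification is that the dual PDE rearranges into $m - \nu \partial_x \rho = \rho u$, whence $(m-\nu\partial_x\rho)^2/(2\rho) = \rho u^2/2$, and \eqref{eq:nogapVB} reduces to showing $\int_{[0,T]\times\bT} (1+\rho)\, u^2/2 = \int_{[0,T]\times\bT} m\, u_0$. I would prove this by testing Burgers' equation against $\phi$ and integrating by parts, using $\phi(T)=0$ in time (which absorbs the boundary term at $t=T$ and turns $\int\int m\, u_0$ into $-\int u_0\, \phi(0)$), and using periodicity in space (so no boundary terms are produced when the $\partial_x(u^2/2)$ and $\nu \partial_{xx} u$ factors are transferred onto $\phi$). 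After substituting $mu = \rho u^2 - \nu u\, \partial_{xx}\phi$ and $\rho u^2 = u^2 - (\partial_x\phi)\, u^2$, the viscous contributions involving $\nu u\, \partial_{xx}\phi$ cancel exactly, and the remaining algebra collapses to the desired equality.

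\paragraph{Main obstacle.}
The analytic content of the result is modest; the main conceptual obstacle is the quantitative positivity of $\rho$, since the maximum-principle lower bound degrades exponentially in $T\|\partial_x u\|_\infty$, and $\|\partial_x u\|_\infty$ is itself typically large for small $\nu$. This is harmless for the qualitative well-posedness statement above, but is precisely the phenomenon alluded to in the introduction of this appendix: once $\rho$ falls below machine precision, the proximal solver loses accuracy, even though the continuous BBB formulation remains well posed on any $[0,T]$.
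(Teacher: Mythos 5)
Your proposal is correct and essentially equivalent to the paper's proof, with a minor reordering of the construction: the paper starts by solving the backward transport--diffusion equation $\partial_t \rho + \partial_x(\rho u) = -\nu\Delta\rho$, $\rho(T)=1$ directly for $\rho$, reads off positivity from the maximum principle, then defines $m:=\rho u+\nu\partial_x\rho$ and recovers $\phi(t,x):=-\int_t^T m(s,x)\,\diff s$; you instead start from the dual PDE $\partial_t\phi+\nu\partial_{xx}\phi=(1-\partial_x\phi)u$, $\phi(T)=0$ and obtain $m=\partial_t\phi$, $\rho=1-\partial_x\phi$ by differentiation. The two constructions yield the same triple (differentiating your $\phi$-equation in $x$ reproduces the paper's $\rho$-equation), the integration-by-parts computation establishing \eqref{eq:nogapVB} is the same, and your quantitative discussion of $\min\rho$ mirrors the paper's remark following the proof.
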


\begin{proof}
%DIF > Given the solution $u$ of Burgers' viscous equation, 
Let us define $\rho$ as the solution of the following backward in time Cauchy problem, involving the solution $u$ of Burgers' viscous equation as a coefficient:
\begin{align}
\label{eq:transportDiffusion}
	\partial_t \rho + \partial_x( \rho u) &= -\nu \Delta \rho, &
	\rho(T,\cdot)=1,
\end{align}
and note that $\rho$ is  \emph{positive} by general properties of transport diffusion equations .
We then define  $m :=\rho u + \nu \partial_x \rho$ and $\phi(t,x) := - \int_t^T m(s,x) \diff s$, for all $t \in [0,T]$.
By construction, there holds 
\begin{align*}
\partial_t \rho + \partial_x m&=0, &
u&=\frac{m-\nu \partial_x \rho}{\rho}, &
\partial_x \phi &= 1-\rho, &
\partial_t \phi &= m.
\end{align*}
By integration by parts, successively in time and space, we obtain
\begin{equation*}
\int_{[0,T]\times \bT} m (u-u_0) = - \int_{[0,T]\times \bT} \phi \partial_t u 
=\int_{[0,T]\times \bT} \big[\phi \partial_x \frac{u^2}{2} - \nu \phi \partial_{xx} u \big]
=\int_{[0,T]\times \bT} \big[(\rho-1) \frac{u^2}{2} + \nu  u \partial_x \rho \big]
\end{equation*}
Using the defining expression of $m$, and the above identity, we conclude that
\begin{equation*}
		\int_{[0,T]\times \bT} \rho u^2 
%		= \int_{[0,T]\times \bT} \frac{(m-\nu \partial_x \rho)^2}{\rho} 
		= \int_{[0,T]\times \bT} (m-\nu \partial_x \rho) u
		= \int_{[0,T]\times \bT} [m u_0 + (\rho-1) \frac{u^2}{2}],
%		= \int_{[0,T] \times \bT} m u_0 - \frac 1 2 u^2 + \rho u^2 .
\end{equation*}
which is equivalent to \eqref{eq:nogapVB}, given that $\rho u^2 = (m-\nu \partial_x \rho)^2/\rho$.
This shows that there is no duality gap in \eqref{eq:gap}, and therefore that $(\rho,m)$ solves the ballistic Benamou-Brenier formulation.
\end{proof}

The positivity of the solution $\rho$ of the transport diffusion equation \eqref{eq:transportDiffusion} is the key ingredient of the proof of \cref{prop:nogapVB}. 
We describe below a heuristic estimate of $\min(\rho)$, which is interesting for numerical considerations, since our BBB implementation is not expected to reliably work if this quantity falls below machine floating point precision. Denoting by $x(t) \in \bT$ the minimizer of $\rho(t,\cdot)$, in such way that $\partial_x \rho(t,x(t))=0$ and $\partial_{xx} \rho(t,x(t))\geq 0$, we obtain formally% assuming differentiability for simplicity 
\begin{equation*}
	\frac{d}{dt} [\rho(t,x(t))] =
	\partial_t \rho(t,x(t)) =
	- \partial_x (\rho u) (t,x(t)) - \nu \Delta \rho(t,x(t)) \leq 
	- \rho(t,x(t)) \partial_x u(t,x(t)),
\end{equation*}
using the envelope theorem for the first identity, and \eqref{eq:transportDiffusion} for the second one. Therefore, $\rho(t,x(t)) \geq \exp(\int_t^T \partial_x u(s,x(s))\diff s)$ in view of the terminal boundary condition $\rho(T)=1$. From a dimensional analysis, one typically expects that 
$\|\partial_x u\|_\infty \lesssim (\max(u)-\min(u))^2 / (8\nu)$, 
where the factor $1/8$ is introduced to match the steadily propagating wave example $u(t,x)=2c / (1+\exp(c (x-ct)/\nu))$ with speed $c>0$. Note also that $\max(u)=\max(u_0)$, and $\min(u)=\min(u_0)$.  Therefore, we heuristically expect 
$\min(\rho) \succsim \exp(- T/T_*)$ with $T_*:= 8\nu/(\max(u_0)-\min(u_0))^2$.

\end{document}